\documentclass[reqno,12pt]{amsart}
\usepackage[margin=1.2in]{geometry}
\usepackage{amssymb}
\usepackage{amsthm}
\usepackage{amsmath}
\usepackage{amsxtra}
\usepackage{latexsym}
\usepackage{mathrsfs}
\usepackage[all,cmtip]{xy}
\usepackage[all]{xy}
\usepackage{enumitem}
\usepackage{xcolor}
\usepackage{comment}
\usepackage{mathabx,epsfig}
\usepackage{bm}
\usepackage{mathtools}
\usepackage{booktabs}
\usepackage{appendix}
\usepackage{hyperref}
\hypersetup{
    colorlinks,
    citecolor=black,
    filecolor=black,
    linkcolor=black,
    urlcolor=black
}

\usepackage[
backend=biber,
bibstyle=ieee-alphabetic,
citestyle=ieee-alphabetic,
sorting=nyt,
isbn=false,
url=false,
doi=false,
giveninits=true,
maxnames=10,
labelalpha=true,
maxalphanames=4,
dashed=false,
]{biblatex}
\DeclareCaseLangs{}

\addbibresource{bibliography.bib}

\DeclareFieldFormat*{date}{\mkbibparens{#1}}
\DeclareFieldFormat*{volume}{\mkbibbold{#1}}
\DeclareFieldFormat*{title}{\mkbibemph{#1\isdot}}
\DeclareFieldFormat*{journaltitle}{#1}
\DeclareFieldFormat*{pages}{{#1}}

\AtEveryBibitem{%
  \ifentrytype{online}{%
    \DeclareFieldFormat*{date}{#1}%
  }{%
  }%
}

\newtheorem{thm}{Theorem}[section]
\newtheorem{lem}[thm]{Lemma}

\newtheorem{claim}[thm]{Claim}
\newtheorem{prop}[thm]{Proposition}
\newtheorem{cor}[thm]{Corollary}

\theoremstyle{definition}
\newtheorem{conv}[thm]{Convention}

\newtheorem{eg}[thm]{Example}
\newtheorem{defn}[thm]{Definition}
\newtheorem{rmk}[thm]{Remark}
\newtheorem*{ack}{Acknowledgements}
\numberwithin{equation}{section}
\newcommand{\var}{\overline}

\theoremstyle{plain}
\newtheorem{theo}{Theorem}
\newtheorem{coro}[theo]{Corollary}

\newcommand{\pushoutcorner}[1][ul]{\save*!/#1-1.8pc/#1:(-1,1)@^{|-}\restore}

\keywords{quasi-F-splitting; Witt vectors; Calabi-Yau varieties; Artin-Mazur height}
\subjclass[2020]{Primary 13A35, 13F35, 14J32; Secondary 14B05}

\def\Q{{\mathbb Q}}

\def\Z{{\mathbb Z}}
\def\P{{\mathbb P}}

\def\F{{\mathbb F}}
\def\m{{\mathfrak m}}

\def\var{\overline}
\def\Hom{\mathop{\mathrm{Hom}}\nolimits}

\def\sO{\mathcal{O}}

\DeclareMathOperator{\Spec}{Spec}
\DeclareMathOperator{\Proj}{Proj}

\DeclareMathOperator{\sht}{ht}

\newcommand{\cred}{\color{black}}

\AtBeginDocument{
  \def\MR#1{}
}

\newenvironment{claimproof}[0]
  {
   \paragraph{\it Proof.}
  }
  {
    \hfill$\blacksquare$
  }

\makeatletter
\def\@tocline#1#2#3#4#5#6#7{\relax
  \ifnum #1>\c@tocdepth 
  \else
    \par \addpenalty\@secpenalty\addvspace{#2}%
    \begingroup \hyphenpenalty\@M
    \@ifempty{#4}{%
      \@tempdima\csname r@tocindent\number#1\endcsname\relax
    }{%
      \@tempdima#4\relax
    }%
    \parindent\z@ \leftskip#3\relax \advance\leftskip\@tempdima\relax
    \rightskip\@pnumwidth plus4em \parfillskip-\@pnumwidth
    #5\leavevmode\hskip-\@tempdima
      \ifcase #1
       \or\or \hskip 1em \or \hskip 2em \else \hskip 3em \fi%
      #6\nobreak\relax
    \hfill\hbox to\@pnumwidth{\@tocpagenum{#7}}\par
    \nobreak
    \endgroup
  \fi}
\makeatother

\title[Fedder type criteria for quasi-$F$-splitting I]
{Fedder type criteria for quasi-$F$-splitting I}
\author{Tatsuro Kawakami, Teppei Takamatsu, and Shou Yoshikawa}
\address{Graduate School of Mathematical Sciences, University of Tokyo, 3-8-1 Komaba,
Meguro-ku, Tokyo 153-8914, Japan}
\email{kawakami@ms.u-tokyo.ac.jp}

\address{Department of Mathematics (Hakubi center), Graduate School of Science, Kyoto University, Kyoto 606-8502, Japan}
\email{teppeitakamatsu.math@gmail.com}

\address{Institute of Science Tokyo, 2-12-1 Ookayama, Meguro-ku, Tokyo 152-8550 Japan}
\email{yoshikawa.s.9fe9@m.isct.ac.jp}
\begin{document}

\begin{abstract}
Yobuko recently introduced the notion of quasi-$F$-splitting and quasi-$F$-split heights, which generalize and quantify the notion of Frobenius-splitting, and proved that quasi-$F$-split heights coincide with Artin-Mazur heights for Calabi-Yau varieties.
In this paper, we prove Fedder type criteria for quasi-$F$-splittings of complete intersections, and in particular, obtain a simple formula to compute Artin-Mazur heights of Calabi-Yau hypersurfaces.
As one of its applications, we prove that there exist Calabi-Yau varieties of arbitrarily high Artin-Mazur height over $\F_2$. 
We also give explicit defining equations of quartic K3 surfaces over $\mathbb{F}_{3}$ realizing all the possible Artin-Mazur heights.
\end{abstract}

\maketitle

\setcounter{tocdepth}{2}

\tableofcontents

\section{Introduction}
In both commutative algebra and algebraic geometry in positive characteristic, the study of Frobenius maps {\cred has} led to significant developments.
The theory of \textit{Frobenius splitting} ($F$-splitting for short), introduced by Mehta-Ramanathan \cite{mehta--ramanathan}, is one of such developments.
We say that a scheme $X$ of positive characteristic is \textit{$F$-split} if the Frobenius map $F\colon \sO_X\to F_{*}\sO_X$ splits as an $\sO_X$-module homomorphism.
$F$-split varieties have been studied by many authors because they satisfy very good properties.
For example, the Kodaira vanishing theorem, which can fail in positive characteristic (\cite{Ray78}), holds on smooth projective $F$-split varieties. 
In addition, $F$-split varieties are known to lift to the ring of Witt vectors of length two (\cite{Joshi2007}, Bhatt's proof in \cite{Lan15}).

The $F$-splitting property is also famous for its interesting connections with other invariants. 
From now on, unless otherwise stated, varieties are defined over an algebraically closed field of positive characteristic.
First, an abelian variety is $F$-split if and only if it is ordinary.
Next, we focus on Calabi-Yau varieties, which are varieties with trivial canonical sheaf and trivial middle cohomologies of structure sheaves.
We can define the \textit{Artin-Mazur height} as the height of the formal group arising from the variety in the sense of Artin-Mazur, which takes a value in $\Z_{>0}\cup \{\infty\}$ (see \cite{AM}, \cite{GK2}). 
It is worth noting that the Artin-Mazur height has a significant role in a stratification of the moduli space of K3 surfaces (\cite[(15.1) Theorem]{GK}), and the height is still a conspicuous invariant for higher-dimensional Calabi-Yau varieties.
For this reason, many authors studied the structure of the formal group and computed the Artin-Mazur heights, especially for K3 surfaces (see \cite{Stienstra}, \cite{Goto}, \cite{Yui}, and \cite{Kedlaya}).
Using a characterization of the Artin-Mazur height by van der Geer-Katsura \cite[Theorem 2.1]{GK2}, we can easily see that the Artin-Mazur height is equal to one if and only if $X$ is $F$-split.

Now, let us recall a criterion for $F$-splitting, famous as \textit{Fedder's criterion} (\cite{Fedder}).
This criterion asserts that, for a hypersurface $\{f=0\} \subseteq \P^n_{[x_0:\cdots:x_n]}$, the $F$-splitting of the hypersurface is equivalent to $f^{p-1} \notin (x^p_0,\ldots,x^p_n)$.
Using this criterion, we can easily confirm whether a given Calabi-Yau hypersurface is of Artin-Mazur height one.

On the other hand, Calabi-Yau varieties behave pathologically, especially when the Artin-Mazur heights are infinite, rather than when the Artin-Mazur heights are greater than one.
Based on this fact, Yobuko introduced the notion of a \textit{quasi-$F$-splitting}, a generalization of that of $F$-splitting, which distinguishes whether the Artin-Mazur heights of Calabi-Yau varieties are infinite or not. 
He defined the \emph{quasi-$F$-split height $\sht(X)$} of a scheme $X$ of positive characteristic by the infimum number $n>0$ such that there exists a $W_n\sO_X$-module homomorphism $\phi$ which makes the following diagram commutative
\[
\xymatrix{
W_n\sO_X \ar[r]^-{F} \ar[d]_-{R^{n-1}} & F_* W_n \ar@{-->}[ld]^{\exists \phi} \sO_X \\
\sO_X,
}
\]
where $R$ is the restriction map and $F$ is the Frobenius map (see {\cred Subsection \ref{subsec:Witt vectors} for details}). 
Furthermore, we say that $X$ is \emph{quasi-$F$-split} if and only if the quasi-$F$-split height $\sht(X)$ is finite.
We note that $X$ is $F$-split if and only if $\sht(X)=1$, and therefore quasi-$F$-splitting is a generalization of $F$-splitting.
Yobuko \cite[Theorem 4.5]{Yobuko19} proved that the quasi-$F$-split height coincides with the Artin-Mazur height for Calabi-Yau varieties. 
In particular, the quasi-$F$-splitting property distinguishes whether the Artin-Mazur height of Calabi-Yau varieties are infinite as desired.

Here, it is worth mentioning that  
quasi-$F$-splitting is a good generalization even if it is not limited to Calabi-Yau varieties.
Indeed, quasi-$F$-split varieties have remarkable properties similar to those of $F$-split varieties; the Kodaira vanishing theorem holds on them (\cite{Yobuko2}, \cite{nakkajima2021degenerations}), and 
every quasi-$F$-split variety lifts to $W_2(k)$ (\cite{Yobuko19}, \cite{AZ21}).

Given these backgrounds, it is important to find out which varieties are quasi-$F$-splits, and therefore, a criterion for quasi-$F$-splitting is highly desirable.
The main aim of this paper is to generalize Fedder's criterion to quasi-$F$-splitting.

\subsection{Fedder type criteria for quasi-$F$-splitting}
\label{subsection: Fedder type criteria}
It is difficult to confirm quasi-$F$-splitting by its definition, even in the case of hypersurfaces.
One of the difficulties comes from the complexity of the ring structure of the ring of Witt vectors.
As $n$ increases, the computation of the ring of Witt vectors of length $n$ becomes more difficult.

Surprisingly, our Fedder type criterion for quasi-$F$-splitting needs only the ring structure of the ring of Witt vectors of length two.
Therefore, the criterion gives us a very simple way to determine the quasi-$F$-split heights of hypersurfaces, and in particular, the Artin-Mazur heights of Calabi-Yau hypersurfaces.

We start with the preparation of the notation for the criterion.
Let $k$ be a perfect field $k$ of characteristic $p>0$, $S \coloneq k[x_1,\ldots,x_N]$ a polynomial ring, $\m:=(x_1,\ldots,x_N)$, and $R:=S_\m$.
We take a basis
\[
\{F_* {\cred (} x_1^{i_1}\cdots x_N^{i_N} {\cred )} \mid 0 \leq i_1,\ldots,i_N \leq p-1 \}
\]
of $F_*S$ over $S$ and denote the dual of $F_*{\cred (}(x_1 \cdots x_N)^{p-1} {\cred )}$ with respect to this basis by $u$.
Then $u$ is a generator of $\Hom_S(F_*S,S)$.
We define the map
\[
\Delta_1 \colon S \to S
\]
as follows:
Let $a \in S$. We take the monomial decomposition $a=\sum^m_{i=1} M_i$.
We define $\Delta_1(a) \in S$ by 
\[
(0,\Delta_1(a))=(a,0)-\sum (M_i,0)\ \hspace{0.2cm} \text{in} \hspace{0.1cm} W_2(S).
\]
We can compute that 
\[
\Delta_1(a)=\sum_{\substack{0 \leq \alpha_1, \ldots,\alpha_m \leq p-1 \\ \alpha_1+\cdots+\alpha_m=p}} \frac{1}{p} \binom{p}{\alpha_1, \ldots ,\alpha_m}(M_1)^{\alpha_1} \cdots (M_m)^{\alpha_m}.
\]

Now, we can introduce a Fedder type criterion for quasi-$F$-splitting, one of our main theorems.

\begin{theo}[Fedder type criterion for quasi-$F$-splitting (see Theorem \ref{thm:Fedder's criterion} and Corollary \ref{cor:Fedder's criterion for quasi-F-splitting} for a more general statement)]\label{Intro:thm:Fedder's criterion}
Let $f\in S$ and $\theta$ an $S$-module homomorphism defined by 
\[
\theta \colon F_*S \to S \ ;\ F_*a \mapsto u(F_*{\cred (} \Delta_1(f^{p-1})a {\cred )}).
\]
We define an increasing sequence $\{I_n\}_n$ of ideals by $I_1:=(f^{p-1})$ and 
\[
I_{n+1}:=\theta(F_*I_n \cap \mathrm{ker}(u))+I_1,
\]
inductively.
Then we have
\[
\sht(R/f)=\mathrm{inf}\{n \mid I_n \nsubseteq \m^{[p]} \},
\]
{\cred where $\inf \emptyset := \infty$.}
Furthermore, if $f$ is a homogeneous element and $N \geq 3$,
then we have
\[
\sht(\Proj(S/f))=\mathrm{inf}\{n \mid I_n \nsubseteq \m^{[p]} \}.
\]
\end{theo}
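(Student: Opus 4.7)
The plan is to proceed by induction on $n$, using the classical identification $\Hom_S(F_*S, S) \simeq F_*S$ (where every $S$-linear map has the form $F_*a \mapsto u(F_*g\,a)$ for a unique $g \in F_*S$) as the bridge between the abstract quasi-$F$-splitting data and explicit ideal membership in $S$. The base case $n = 1$ is Fedder's original criterion: since $I_1 = (f^{p-1})$, one has $I_1 \not\subseteq \m^{[p]}$ iff $R/f$ is $F$-split iff $\sht(R/f) = 1$.

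For the inductive step I would reinterpret the commutative diagram defining $\sht(R/f) \leq n$ via the short exact sequence
\[
0 \to \sO_X \xrightarrow{V^{n-1}} W_n\sO_X \xrightarrow{R} W_{n-1}\sO_X \to 0
\]
for $X = \Spec(R/f)$. An $S$-linear map $\phi \colon F_*W_n\sO_X \to \sO_X$ satisfying $\phi \circ F = R^{n-1}$ decomposes into its pullback along $F_*V^{n-1}$ and an induced map on $F_*W_{n-1}\sO_X$. The key computation is that when one presents a Teichm\"uller-type lift of $f^{p-1}$ as a sum of Teichm\"uller lifts of its constituent monomials, the discrepancy that appears at the next Witt level is exactly $\Delta_1(f^{p-1})$. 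Composing this correction with $u$ produces the operator $\theta(F_*a) = u(F_*\Delta_1(f^{p-1})\,a)$, and the commutativity constraint on $\phi$ translates, after feeding it through $u$, into the requirement that a specific element of $S$ at the previous level, when hit by $\theta$, descends appropriately modulo $(f^{p-1})$.

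Iterating this analysis, I expect to identify the set of admissible obstructions at level $n$ with the ideal $I_n$: each recursive application of $\theta$ transports a previous-level element in $\ker u$ to the next level, while the summand $(f^{p-1})$ records the freedom to adjust by an $F$-split-type term at each new step. A Nakayama-type argument on the local ring $R$ then converts ``an obstruction at level $n$ produces a unit after applying $u$'' into ``$I_n \not\subseteq \m^{[p]}$,'' establishing the local statement. For the projective version, I would use that when $N \geq 3$ the punctured affine cone $\Spec(S/f) \setminus \{\m\}$ controls the quasi-$F$-splitting of $\Proj(S/f)$, and that this reduces via depth considerations (the hypersurface $R/f$ has depth $\geq 2$) back to the local criterion at the vertex $\m$.

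The main obstacle is the Witt vector bookkeeping in the inductive lift: tracking every correction that arises when pushing a lift through the Witt Frobenius across the tower, and verifying that all such corrections are exactly absorbed by the recursion $I_{n+1} = \theta(F_*I_n \cap \ker u) + (f^{p-1})$ --- neither missing a generator nor producing spurious contributions beyond those neutralized by $\ker u$ and $(f^{p-1})$. The combinatorial identity behind $\Delta_1$ (the multinomial expansion encoding $a^p - \sum(a_iM_i)^p$) and its compatibility with $u$ is what should make the recursion close up cleanly; pinning down this closure is where the bulk of the technical work lies.
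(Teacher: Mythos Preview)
Your overall direction---filtering by Witt length and translating the splitting condition into ideal membership via the identification $\Hom_S(F_*S,S)\simeq F_*S$---matches the paper's, but there is a substantive gap in how the recursion closes up.

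First, a structural point: the paper does not work with $F_*W_n\sO_X$ directly but passes to the pushout $Q_{X,n}\simeq F_*(W_n\sO_X/pW_n\sO_X)$, which carries an honest $S$-module structure (via $a\cdot\alpha=[a^p]\alpha$) rather than merely a $W_n(S)$-module structure. This is what makes your phrase ``an $S$-linear map $\phi\colon F_*W_n\sO_X\to\sO_X$'' precise. The relevant exact sequence is then
\[
0\to F^{n-1}_*(F_*R/R)\xrightarrow{V^{n-1}} Q_{R,n}\to Q_{R,n-1}\to 0,
\]
and the first real task is to build an explicit $S$-linear splitting $\sigma_n$ of $V^{n-1}$. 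The paper's splitting (Proposition~\ref{prop:delta gives a splitting}) is expressed in terms of an entire tower of maps $\Delta_1,\Delta_2,\ldots,\Delta_{n-1}$, not just $\Delta_1$: one has $\sigma_n(a_0,\ldots,a_{n-1})=\Delta_{n-1}(a_0)+\cdots+\Delta_1(a_{n-2})+a_{n-1}$. Consequently the description of $\Hom_R(Q_{R,n},R)$ (Lemma~\ref{lem:structure of homomorphism for n}) and the resulting raw criterion (Theorem~\ref{thm:Fedder's type criterion for general ideals}) involve sums $\sum_{r=0}^{n-s} u^r(F^r_*g_{r+s}\Delta_r(x))$ with all $\Delta_r$ present.

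The step you are missing is the \emph{Delta formula} (Theorem~\ref{thm:delta formula}): $\Delta_n(f)\equiv f^{p^n-p}\Delta_1(f)\bmod F(S)$. This is what collapses the tower of $\Delta_r$'s to a criterion involving only the single operator $\theta=u(F_*\Delta_1(f^{p-1})\,\cdot\,)$. Its proof is not a routine combinatorial identity: the paper lifts to a mixed-characteristic polynomial ring with a Frobenius lift, proves the exact identity $\Delta_n(a)=p^{1-n}\Delta_1(a^{p^{n-1}})$ there via ghost components, and then reduces modulo $p$ and modulo $F(S)$. Without this formula your recursion $I_{n+1}=\theta(F_*I_n\cap\ker u)+(f^{p-1})$ has no a priori reason to capture the correct obstruction set; naively one would need a different operator at each stage. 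Concretely, Lemma~\ref{lem: g_n change} is where the Delta formula is invoked to show that the higher-order terms $u^r(F^r_*g_{r+s}\Delta_r(f_j))$ for $r\geq 2$ automatically land in $I^{[p^s]}$ (because $\Delta_r(f_j)\in f_j^{p^r-p}R+F(R)$), so only the $r=0,1$ contributions survive and the recursion with $\theta$ alone is exact. Your sketch anticipates that ``the recursion close[s] up cleanly'' but does not identify this as the crux, nor supply the mixed-characteristic argument needed to establish it.
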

\noindent
Theorem \ref{Intro:thm:Fedder's criterion} holds for more general settings.
We refer to Theorem \ref{thm:Fedder's criterion} for the case of complete intersection local rings, and Theorem \ref{thm:Fedder's criterion for projective varieties in weighted case} for the case of complete intersections in a weighted projective space. 

Since $\{I_n\}_n$ is an increasing sequence, we find the maximum element $I_\infty$ of ideals $\{I_n \mid n\}$ by the Noetherian property.
Using $I_\infty$, we obtain another criterion, which is useful to confirm non-quasi-$F$-splitting of a given hypersurface.

\begin{coro}[cf.~Corollary \ref{cor:Fedder's criterion for quasi-F-splitting}]\label{Intro:cor:criterion for quasi-F-splitting}
We take $f \in S$, $\theta$, and $\{I_n\}_n$ as in Theorem \ref{Intro:thm:Fedder's criterion}.
Let $I_\infty$ be the maximum element of $\{I_n \mid n\}$.
Then the following hold.
\begin{itemize}
    \item[\textup{(1)}] $R/f$ is quasi-$F$-split if and only if $I_\infty$ is not contained in $\m^{[p]}$.
    \item[\textup{(2)}] $I_\infty$ is the minimum element of the set of of ideals $J$ satisfying
    \[
    J \supseteq \theta(F_*J \cap \mathrm{ker}(u))+(f^{p-1}).
    \]
\end{itemize}
\end{coro}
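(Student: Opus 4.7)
The plan is to deduce both statements directly from Theorem \ref{Intro:thm:Fedder's criterion} together with the Noetherian fact that the ascending chain $\{I_n\}_n$ stabilizes, so that $I_\infty$ is unambiguously defined and equals $I_n$ for all sufficiently large $n$. Once stabilization is in hand, the two claims split into an easy translation (for (1)) and a short induction (for (2)), with essentially no obstacles beyond bookkeeping.

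For part (1), I would argue as follows. By Theorem \ref{Intro:thm:Fedder's criterion}, $R/f$ is quasi-$F$-split iff $\sht(R/f)<\infty$ iff there exists some $n$ with $I_n\nsubseteq \m^{[p]}$. Because $I_1\subseteq I_2\subseteq\cdots$, the existence of such an $n$ is equivalent to $I_\infty\nsubseteq \m^{[p]}$ (using $I_\infty=I_n$ for $n\gg 0$). This is purely formal.

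For part (2), I would first verify that $I_\infty$ satisfies the stated inclusion: by stabilization, choose $n$ with $I_n = I_{n+1} = I_\infty$, and then the defining recursion gives
\[
I_\infty \;=\; I_{n+1} \;=\; \theta(F_* I_n \cap \ker(u))+(f^{p-1}) \;=\; \theta(F_*I_\infty\cap \ker(u))+(f^{p-1}),
\]
which is stronger than $\supseteq$. For minimality, let $J$ be any ideal with $J\supseteq \theta(F_*J\cap \ker(u))+(f^{p-1})$; I would show $J\supseteq I_n$ for every $n$ by induction. The base case $J\supseteq (f^{p-1})=I_1$ is contained in the hypothesis. For the inductive step, monotonicity of both $F_*(-)\cap \ker(u)$ and of $\theta$ under inclusion of ideals gives
\[
\theta(F_* I_n \cap \ker(u)) \subseteq \theta(F_*J\cap \ker(u)) \subseteq J,
\]
so $I_{n+1}=\theta(F_*I_n\cap\ker(u))+(f^{p-1})\subseteq J$. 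Passing to the union yields $I_\infty\subseteq J$.

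The main (mild) subtlety I anticipate is ensuring that the monotonicity used in the induction is literally correct: $\theta$ is $S$-linear, and the functor $F_*(-)\cap \ker(u)$ preserves inclusions of $S$-submodules of $S$, so the chain of containments in the induction is justified without any further structural work. No deeper input than Theorem \ref{Intro:thm:Fedder's criterion} and Noetherianity is needed.
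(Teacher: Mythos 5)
Your proof is correct and follows essentially the same route as the paper: both rely on Noetherian stabilization of the increasing chain $\{I_n\}_n$, verify that $I_\infty$ in fact satisfies the defining recursion with equality, and establish minimality by the same one-line monotonicity induction showing $I_n \subseteq J$ for every $n$. Part (1) is the same translation from Theorem \ref{Intro:thm:Fedder's criterion} in both proofs.
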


To find the quasi-$F$-split height, Theorem \ref{Intro:thm:Fedder's criterion} gives us a simpler way than confirming the existence of $\phi$ in the definition of the quasi-$F$-split height.
However, {\cred compared} with the original Fedder's criterion, {\cred it remains complicated for the following reasons:}
We have to deal with a sequence of ideals $\{I_n\}_n$ in Theorem \ref{Intro:thm:Fedder's criterion}, whereas
it is sufficient to treat only one element $f^{p-1}$ in the original Fedder's criterion.

Fortunately, in the case where the canonical divisor is trivial, we obtain a simpler criterion that is sufficient to deal with only one element.

\begin{theo}[Calabi-Yau case of a Fedder type criterion for quasi-$F$-splitting (see Theorem \ref{thm:Fedder's criterion for Calabi-Yau} for a more general statement)]\label{Intro:thm:Fedder's criterion for Calabi-Yau}
Let $f\in S:=k[x_1,\ldots,x_N]$ be a homogeneous of degree $N$ and $\theta$ an $S$-module homomorphism defined by
\[
\theta \colon F_*S \to S \ ;\ F_*a \mapsto u(F_*{\cred (} \Delta_1(f^{p-1})a {\cred )}).
\]
Then we have
\[
\sht(S/f)=\mathrm{inf}\{n \mid \theta^{n-1}(F^{n-1}_*f^{p-1}) \notin \m^{[p]} \}.
\]
Furthermore, if $N \geq 3$, then we have
\[
\sht(\Proj(S/f))=\mathrm{inf}\{n \mid \theta^{n-1}(F^{n-1}_*f^{p-1}) \notin \m^{[p]} \}.
\]
\end{theo}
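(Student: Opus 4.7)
The plan is to invoke the general Fedder-type criterion (Theorem \ref{Intro:thm:Fedder's criterion}), which reduces the assertion to verifying
\[
\inf\{n : I_n \not\subseteq \m^{[p]}\} \;=\; \inf\{n : g_n \not\in \m^{[p]}\},
\]
where I write $g_n := \theta^{n-1}(F^{n-1}_* f^{p-1})$, so that $g_1 = f^{p-1}$ and $g_{n+1} = \theta(F_* g_n)$. I treat $\Theta := \theta(F_*(-)) \colon S \to S$ as an operator that is additive and satisfies $\Theta(c\,h) = c^{1/p}\Theta(h)$ for $c \in k$.

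First I would exploit the grading forced by the Calabi--Yau hypothesis $\deg f = N$. The ideals $I_n$ are homogeneous, and a direct induction on $n$ based on the identity $\deg \theta(F_*a) = \frac{(p-1)N(p-1) + \deg a}{p}$ (for homogeneous $a$ with $\theta(F_*a) \neq 0$) shows that each $I_n$ has no nonzero component in degrees below $N(p-1)$, and that $\Theta$ preserves $S_{N(p-1)}$. By pigeonhole every polynomial of degree $> N(p-1)$ automatically lies in $\m^{[p]}$, whereas in degree exactly $N(p-1)$, membership in $\m^{[p]}$ is detected by the single $k$-linear functional $u \circ F_* \colon S_{N(p-1)} \to k$ extracting the coefficient of $x_1^{p-1}\cdots x_N^{p-1}$. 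Consequently $I_n \subseteq \m^{[p]}$ if and only if $(I_n)_{N(p-1)} \subseteq \m^{[p]}$, and the degree analysis refines the recursion to
\[
(I_{n+1})_{N(p-1)} \;=\; \Theta\bigl((I_n)_{N(p-1)} \cap \m^{[p]}\bigr) + k\,f^{p-1}.
\]

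The key lemma, to be proved by induction on $n$, is that if $g_1, \dots, g_{n-1} \in \m^{[p]}$, then
\[
(I_n)_{N(p-1)} \;=\; kg_1 + kg_2 + \cdots + kg_n.
\]
The base case is immediate from $I_1 = (f^{p-1})$. For the step, the hypothesis forces $(I_n)_{N(p-1)} \cap \m^{[p]} = (I_n)_{N(p-1)}$, since all $g_i$ with $i \le n$ lie in $\m^{[p]}$. The perfectness of $k$ combined with the $p^{-1}$-linearity of $\Theta$ then yields
\[
\Theta\bigl(kg_1 + \cdots + kg_n\bigr) \;=\; kg_2 + \cdots + kg_{n+1},
\]
which after adding the contribution $kf^{p-1} = kg_1$ from $(f^{p-1})$ finishes the inductive step.

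Finally, setting $n_0 := \inf\{n : g_n \not\in \m^{[p]}\}$, the lemma gives $(I_n)_{N(p-1)} = kg_1 + \cdots + kg_n \subseteq \m^{[p]}$ for all $n < n_0$, so that $I_n \subseteq \m^{[p]}$; at $n = n_0$ the element $g_{n_0}$ lies in $(I_{n_0})_{N(p-1)}$ but not in $\m^{[p]}$, so $I_{n_0} \not\subseteq \m^{[p]}$. Monotonicity of $\{I_n\}$ then yields $\inf\{n : I_n \not\subseteq \m^{[p]}\} = n_0$, proving the affine statement; the projective version follows identically using the projective form of the general criterion. The principal obstacle is the inductive identification of $(I_n)_{N(p-1)}$: it relies essentially on the Calabi--Yau hypothesis $\deg f = N$ in two places, namely to force $\Theta$ to preserve the single degree $N(p-1)$, and to ensure that this graded piece alone captures all obstructions to $\m^{[p]}$-containment.
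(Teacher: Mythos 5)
Your proof is correct, and it reorganizes the paper's argument in a tidier way that is specific to the hypersurface case. The shared core with the paper's proof of Theorem~\ref{thm:Fedder's criterion for Calabi-Yau} is the degree bookkeeping: since $\deg f = N$, the operator $\Theta$ preserves the top degree $N(p-1)$, every homogeneous element of degree $>N(p-1)$ lies in $\m^{[p]}$ automatically, and in degree exactly $N(p-1)$ membership in $\m^{[p]}$ is detected by $u\circ F_*$ (Lemmas~\ref{lem:homogeneous of special degree}, \ref{lem:homogeneous of top degree}). What you do differently is identify the finite-dimensional space $(I_n)_{N(p-1)}$ outright as $kg_1+\cdots+kg_n$ via the graded recursion $(I_{n+1})_{N(p-1)}=\Theta\bigl((I_n)_{N(p-1)}\cap\m^{[p]}\bigr)+kf^{p-1}$, whereas the paper obtains the inclusion $kg_l\subseteq I_l$ (Claim~\ref{claim:inclusion}) for one direction and then, for the converse, takes abstract witnesses $g_i$ from Lemma~\ref{lem:pre-Fedder's criterion}, shows they have degree $(p-1)\mu$, and normalizes them modulo $I^{[p]}$ using Claim~\ref{claim:ip claim}. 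Your cleaner route is available precisely because for a hypersurface $I=(f)$ one has $I^{[p]}=(f^p)$, which contributes nothing in degree $N(p-1)$, so $(I_1)_{N(p-1)}=kf^{p-1}$ on the nose; the paper's more elaborate witness-normalization and the auxiliary Claim~\ref{claim:ip claim} are there to absorb the $I^{[p]}$-contribution that genuinely appears in degree $(p-1)\mu$ for a codimension $\geq 2$ complete intersection, so your key lemma does not carry over verbatim to the general Theorem~\ref{thm:Fedder's criterion for Calabi-Yau}. For the specific statement at hand, however, your argument is sound and self-contained; the only small point worth making explicit is that perfectness of $k$ is used both to make $\Theta$ $p^{-1}$-semilinear over $k$ and to pass between $u(F_*a)=0$ and vanishing of the coefficient of $(x_1\cdots x_N)^{p-1}$.
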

\noindent
We note that $\Proj(S/f)$ is a Calabi-Yau hypersurface if it is smooth since $f\in S$ is homogeneous of degree $N$, and therefore Theorem \ref{Intro:thm:Fedder's criterion for Calabi-Yau} enables us to compute the Artin-Mazur heights of Calabi-Yau hypersurfaces.

We close this subsection showing applications of Theorem \ref{Intro:thm:Fedder's criterion}, Corollary \ref{Intro:cor:criterion for quasi-F-splitting}, and Theorem \ref{Intro:thm:Fedder's criterion for Calabi-Yau} as follows:

\begin{description}
    \item[Calabi-Yau hypersurface of arbitrary large Artin-Mazur height] 
     Using Theorem \ref{Intro:thm:Fedder's criterion for Calabi-Yau}, we found an explicit defining equation of Calabi-Yau hypersurfaces over $\mathbb{F}_2$ whose Artin-Mazur height $2h$ for every positive integer $h \in \Z_{>0}$ (Example \ref{eg:unboundedness}).
     Furthermore, we apply Theorem \ref{Intro:thm:Fedder's criterion for Calabi-Yau} to construct a Calabi-Yau hypersurface over $\var{\F}_p$ whose Artin-Mazur height $h$ for every positive integer $h\in\Z_{>0}$ and a prime number $p$ in \cite{KTY2}. 
    \item[Use of a computer algebra system]
Theorem \ref{Intro:thm:Fedder's criterion} and Theorem \ref{Intro:thm:Fedder's criterion for Calabi-Yau} are well-suited to a computer algebra system
(see \cite{Takamatsu_code} for the Macaulay 2 code).
    For example, we found an explicit equation of a quartic K3 surface in $\mathbb{P}^3_{\mathbb{F}_{3}}$ of the Artin-Mazur height $h$ for every $h\in \{1,\ldots, 10, \infty \}$ (Example \ref{example:char 3 K3 surface}). We remark that such examples have been known only over $\mathbb{F}_{2}$ (see \cite{Kedlaya}). 
    We also found an explicit equation of a Calabi-Yau quintic threefold over $\mathbb{F}_2$ of Artin-Mazur height $60$ (Example \ref{eg:ht=60}).

\end{description}

\subsection{Strategy of the proof of Theorem \ref{Intro:thm:Fedder's criterion}}
\label{subsect:strategyofpf}

Theorem \ref{Intro:thm:Fedder's criterion} is proved in Sections \ref{Section:Construction of splitting maps} and \ref{Section:Fedder type criteria for quasi-$F$-splitting}.
{\cred
In this subsection, we primarily explain Section \ref{Section:Construction of splitting maps}, which contains arguments that fundamentally differ from the proof of Fedder’s original criterion.
}

In the proof of the original Fedder's criterion, it is important to describe the explicit structure of $\Hom_S(F_*S,S)$ as
 \begin{align}\label{introduction:isom}
    F_*S \to \Hom_S(F_*S,S)\ ;\ F_{\ast}g  \mapsto \psi_g:=(F_*a \mapsto u(F_*{\cred (}ga {\cred )})),
\end{align}
where $u$ is the dual of $F_*(x_1^{p-1} \cdots x_N^{p-1})$.
For this reason, the aim of Section \ref{Section:Construction of splitting maps} is to clarify a precise description of $\Hom_{W_n(S)}(F_*W_n(S),S)$.

\textbf{Step 1.} 
In this step, we introduce an $S$-module $Q_{S,n}$ to describe $\Hom_{W_n(S)}(F_*W_n(S),S)$. 

Since the ring structure of $W_n(S)$ is complicated, it is hard to confirm whether a given map is a $W_n(S)$-module homomorphism. 
For this reason, we take the pushout $Q_{S,n}$ of the diagram
\[
\xymatrix{
W_n(S) \ar[r]^-{F} \ar[d]_{R^{n-1}} & F_*W_n(S) \ar[d] \\
S \ar[r] & Q_{S,n} \pushoutcorner
}
\]
as $W_n (S)$-modules.
Then $Q_{S,n}$ has the natural $S$-module structure, and the lower horizontal map can be seen as an $S$-module homomorphism (see Definition \ref{defn:Q}).
By the definition of the pushout, we can see that $X$ is $n$-quasi-$F$-split  (see Definition \ref{defn:quasi-F-split} for the definition) if and only if the bottom horizontal map splits.
Therefore, the problem is reduced to studying the structure of $\Hom_S(Q_{S,n},S)$.

\textbf{Step 2.}
In this step, we construct an {\cred $S$-module} isomorphism 
\begin{equation}\label{isom:Psi1}
F_*S \oplus \bigoplus_{2 \leq e \leq n} F^{e-1}_*({\cred \mathrm{ker}(u)}) \xrightarrow{\cong} \Hom_S(Q_{S,n},S).
\end{equation}

By Proposition \ref{prop:exact sequence of Q_n's}, we have an exact sequence {\cred of $W_n(S)$-modules}
\begin{equation}\label{exact sequence:Q}
\xymatrix{
0 \ar[r] & F^{n-1}_*(F_* S /{\cred S}) \ar[r]^-{V^{n-1}} & Q_{S,n} \ar[r]^-{\pi} & Q_{S,n-1} \ar[r] & 0
}
\end{equation}
for $n\geq 2$,
where $V$ is the map induced by the Verschiebung morphism, $\pi$ is the map induced by the restriction map $R$ (see Definition \ref{defn:Witt ring} and Definition \ref{defn:Q}), {\cred and $F_*S/S$ is the cokernel of the Frobenius homomorphism $F \colon S \rightarrow F_*S$.}

To prove (\ref{isom:Psi1}), we {\cred construct} a splitting map $\sigma_n$ of $V^{n-1}$ in (\ref{exact sequence:Q}).
For an element $a \in S$, we take the monomial decomposition $a=\sum M_i$, and define $\delta_i(a) \in S$ by
\[
(0,\delta_1(a),\delta_2(a),\ldots)=[a]-\sum [M_i]\hspace{0.2cm} \text{in} \hspace{0.15cm} W(S).
\]
We note that $\delta_1(a)$ coincides with $\Delta_1(a)$ in Theorem \ref{Intro:thm:Fedder's criterion}.
We define $\sigma_2$ by
\[
\sigma_2 \colon Q_{S,2} \to F_*(F_*S/{\cred F(S)})\ ;\ {\cred F_*(a,b) \mapsto F^2_* (\delta_1(a)+b).}
\]
Then we have $\sigma_2 \circ V=\mathrm{id}$.
On the other hand, it is not clear {\cred whether} $\sigma_2$ is an $S$-module homomorphism. We refer to Proposition \ref{prop:delta gives a splitting} for the proof.
We define $\sigma_3$ by
\[
{\cred
\xymatrix@R=3pt{
\sigma_3 \colon
Q_{S,3} \ar[r]  & F_* Q_{S,2} \ar[r]^{\sigma_2} & F^2_*( F_*S/F(S)), \\
F_*(a,b,c) \ar@{|->}[r] \ar@{(-}[u] & F_*^2((\delta_1(a),\delta_2(a))+(b,c)) \ar@{|->}[r] \ar@{(-}[u] & F^3_*((\delta^2_1(a)+\delta_2(a))+\delta_1(b)+c) \ar@{(-}[u] 
}
}
\]
{\cred and can define $\sigma_n$ similarly; we then obtain the desired splitting.}

By $\sigma_n$ and (\ref{exact sequence:Q}), we have
\begin{align}\label{introduction:splitting isom}
\Hom_S(Q_{S,n},S) \xrightarrow{(\sigma_n^*,\pi^*)}& \Hom_S(F^{n-1}_*(F_*S/{\cred F(S)}),S) \oplus \Hom_S(Q_{S,n-1},S) \notag\\
 \cong& F^{n-1}_*({\cred \mathrm{ker}(u)})  \oplus \Hom_S(Q_{S,n-1},S),
\end{align}
where the second isomorphism follows from (\ref{introduction:isom}).
Repeating this procedure, we obtain an {\cred $S$-module} isomorphism
\[
\Psi_n \colon F_*S \oplus \bigoplus_{2 \leq e \leq n} F^{e-1}_*({\cred \mathrm{ker}(u)}) \xrightarrow{\cong} \Hom_S(Q_{S,n},S),
\]
which is the goal of this step.

\textbf{Step 3.} In this step, we define  maps $\Delta_s\colon S\to S$ to study 
\[
\psi_{(g_1,\ldots,g_n)}:=\Psi_n({\cred (} {\cred F_*}g_1, {\cred F^2_* g_2,}\ldots, {\cred F^n_*}g_n {\cred )})\in \Hom_S(Q_{S,n},S)
\] 
for $({\cred F_*}g_1, {\cred F^2_* g_2,}\ldots, {\cred F^n_*}g_n) \in F_*S \oplus \bigoplus_{2 \leq e \leq n} F^{e-1}_*({\cred \mathrm{ker}(u)})$ and $s \in \Z_{>0}$.

We define maps $\Delta_s$ by $\Delta_1:=\delta_1$ and
\begin{equation}
    \label{eqn:Deltadelta}
\Delta_s(a):=\sum_{r=1}^{s-1} \Delta_r \circ \delta_{s-r}(a)+\delta_s(a)
\end{equation}
inductively.
Then a direct computation shows
\begin{align}\label{equation:sigma=Delta}
     \sigma_n((a_0,a_1,\ldots,a_{n-1}))= \Delta_{n-1}(a_0)+\Delta_{n-2}(a_1)+\cdots+a_{n-1}.
\end{align}
Combining the construction of $\Psi_n$ with (\ref{equation:sigma=Delta}), we obtain 
\begin{align}\label{equation:psi}
  \psi_{(g_1,\ldots,g_n)}(V^{r-1}[a]))=u^r(F^r_*{\cred (}g_ra {\cred )})
  +u^{r+1}(F^{r+1}_* {\cred (}g_{r+1}\Delta_1(a) {\cred )})+\cdots+u^n(F^n_*{\cred (}g_n\Delta_{n-r}(a){\cred )})  
\end{align}
for all $1 \leq r \leq n$, where $[a]:=(a,0,\ldots,0)$ denotes the Teichm\"uller lift of $a$.

\textbf{Step 4.}
As the last step, we find simple descriptions of $\Delta_s$.

For the computation of the right-hand side of (\ref{equation:psi}), it suffices to know $\Delta_{s}(a)$ modulo $F(S)$.
In Theorem \ref{Intro:thm:delta formula}, we will see that $\Delta_s(a)$ can be written only by $\Delta_{1}(a)$ modulo $F(S)$. 
This fact implies that only the ring structure of $W_2(S)$ is necessary for the computation of quasi-$F$-split heights.

\begin{theo}[Delta formula ({\cred Remark \ref{eg:explicit delta} and }Theorem \ref{thm:delta formula})]\label{Intro:thm:delta formula}
$\Delta_s(a) \equiv a^{p^s-p}\Delta_1(a) \mod F(S)$ for all $a \in S$ and $s \geq 1$.
\end{theo}

\noindent
Now, we see the sketch of proof of Theorem \ref{Intro:thm:delta formula}.
We recall that the ring of Witt vectors is defined via mixed characteristic (see Subsection \ref{subsec:Witt vectors}). 
For this reason, we consider the ring of Witt vectors over a lift $A:=W(k)[x_1,\ldots,x_N]$ of $S$.
Taking the monomial decomposition, we can define $\delta_n$ and $\Delta_n$ on $A$ in a way similar to the above.
Then Theorem \ref{Intro:thm:delta formula} can be reduced to proving an equation
\begin{align}\label{introduction:eq:delta}
    \Delta_s(a)=\frac{a^{p^s}-(a^p-p\Delta_1(a))^{p^{s-1}}}{p^s}
\end{align}
for $a \in A$.
Indeed, the right-hand side is equal to 
\[
a^{p^s-p}\Delta_1(a)-\frac{1}{p^{s-2}}\binom{p^{s-1}}{2}a^{p^s-2p}\Delta_1(a)+\cdots +(-1)^{p+1}p^{p^{s-1}-s}\Delta_1(a)^p,
\]
{\cred and its image under the map $A \to S/F(S)$ is $a^{p^s-p}\Delta_1(a)$. By making essential use of the ghost component—which behaves well only in mixed characteristic (see Lemma \ref{lem:delta formula in mixed characteristic} for details)—we prove \eqref{introduction:eq:delta} and complete the proof of Theorem \ref{Intro:thm:delta formula}.}

\subsection{Other applications of the main theorems}

In a subsequent paper \cite{KTY2}, we apply Theorems \ref{Intro:thm:Fedder's criterion} and \ref{Intro:thm:Fedder's criterion for Calabi-Yau} as follows:

\begin{description}
\item[Rational double points]
    Rational double points (RDPs, for short) in characteristic $p$ are $F$-split when $p>5$, while {\cred they} can be non-F-split when $p\leq 5$ (\cite{Hara2}).
    Applying Theorem \ref{Intro:thm:Fedder's criterion}, we determine quasi-$F$-split heights of all RDPs, and in particular, show that RDPs are all quasi-$F$-split. 
    We remark that the quasi-$F$-split heights of RDPs {\cred were already computed by Yobuko in his unpublished work, except for those of} $D_{n}$-type in $p=2$.
    His proof depends on detailed calculations of local cohomologies, and the method {\cred is completely different from ours}.

\item[Fano varieties]
    Using Theorem \ref{Intro:thm:Fedder's criterion}, we can find a non-quasi-$F$-split smooth Fano $d$-fold for all $d>2$. 
    {\cred Note that Totaro (\cite{Totaro}) already constructed non-quasi-$F$-split smooth Fano varieties, but our example gives a different one.}
    On the other hand, we prove smooth del Pezzo surfaces (i.e., smooth Fano varieties of dimension two) have quasi-$F$-split heights at most two, and in particular, they are all quasi-$F$-split. Note that there exist non-$F$-split del Pezzo surfaces when $p\leq 5$ (\cite[Example 5.5]{Hara}).
    
\item[Inversion of adjunction]
We see that the inversion of adjunction for quasi-$F$-spitting does not hold in general; there is a non-quasi-$F$-split hypersurface in a quasi-$F$-split variety.
 On the other hand, we prove that $X \subseteq \P^N$ is a hypersurface of degree $d$ and $Y$ an intersection of general $(N+1-d)$ hyperplane sections, then $\sht(Y) \geq \sht(X)$. Here, we use Theorem \ref{Intro:thm:Fedder's criterion for Calabi-Yau} essentially.
 This result also holds for hypersurfaces in weighted projective spaces, and it is useful to {\cred give an} upper bound of quasi-$F$-split heights of some class of Fano varieties. 
 \item[Fiber product]
We see that if $X$ and $Y$ are complete intersections and not $F$-split, then $X \times Y$ is not quasi-$F$-split.
 On the other hand, we prove {\cred that} if $X$ is $F$-split, then $\sht(X \times Y)=\sht(Y)$, where $X$ and $Y$ are not necessarily complete intersections.
 For the proof, we use a generalized version of Theorem \ref{Intro:thm:Fedder's criterion}.
 \item[General fiber] 
{\cred We observe that, in general, the quasi-$F$-splitting property does not descend to a general fiber}; we show that there exists a smooth Fano threefold of $\sht(X)=2$ with a wild conic bundle structure.
 On the other hand, using Theorem \ref{Intro:thm:Fedder's criterion for Calabi-Yau}, we can see that the quasi-$F$-splitting property is inherited to a general fiber when the generic fiber has trivial canonical divisor and is a complete intersection in a projective space over the function field of the base scheme.
 In particular, we show that every quasi-$F$-split surface has no quasi-elliptic fibrations.
\end{description}

\subsection{Related work}
\begin{itemize}
\item Hiromu Tanaka, Jakub Witaszek, Fuetaro Yobuko, and the authors investigated birational geometric aspects of quasi-$F$-splitting in \cite{KTTWYY}, \cite{KTTWYY2}. In particular, we proved that two-dimensional klt singularities over a perfect field of any characteristic and three-dimensional $\Q$-factorial klt singularities over a perfect field of characteristic $p>41$ are quasi-$F$-split.
\item 
{\cred
Tanaka-Witaszek-Yobuko introduced the notion of quasi-$F^e$-splitting (the iterated version of quasi-$F$-splitting), quasi-$F$-regularity, and quasi-$+$-regularity in the work \cite{TWY}. They also prove the inversion of adjunction in general settings.
}

\item 
{\cred The third author proves a Fedder type criterion for quasi-$F^e$-splitting and quasi-$F$-regularity in \cite{YoshikawaFedder}.}
\item
{\cred
Yobuko (\cite{YobukoQuasiHodgeWitt}) proved a generalization of the result on fiber products in \cite{KTY2} without using Fedder’s criterion.
}
\end{itemize}

\subsection{Notation and terminologies}
{\cred Throughout the paper, we fix a prime number $p$.}
Let $S$ be a ring of characteristic $p$.
\begin{itemize}
    \item $F \colon S \to S$ is the absolute Frobenius homomorphism.
    If the Frobenius morphism is finite, then we say that $S$ is \emph{$F$-finite}.
    \item For an $S$-module $M$, we define another $S$-module structure on $M$ by
    \[
    a \cdot m:=a^pm
    \]
    for $a \in S, m \in M$,
    which is denoted by $F_*M$.
    In order to distinguish the action by $S$ on $F_*M$ from that on $M$, we denote the elements of $F_*M$ as $F_*m$.
    Thus 
    \[
    aF_*m=F_* {\cred (}a^pm {\cred )}
    \]
    for $a \in S, m \in M$.
    We can view the Frobenius map
    \[
    S \to F_*S\ ;\ a \mapsto F_*a^p=aF_*1
    \]
    as an $S$-module homomorphism.
    We similarly define the iterated version $F^n_*M$ and $F^n_*m$ as above.
    \item
    {\cred We denote the cokernel of the Frobenius homomorphism
    \[
    F \colon S \rightarrow F_*S
    \]
    by $F_*S/S$ by an abuse of notation.
    We apply the same notational abbreviation to the structure sheaf $\sO_X$ of a scheme $X$ of characteristic $p$.
    }
    \item Let $I$ be an ideal of $S$.
    We define $I^{[p]}$ as {\cred the} ideal generated by
    $\{a^p \mid a \in I\}$.
    Furthermore, we inductively define $I^{[p^n]}:=(I^{[p^{n-1}]})^{[p]}$ for $n\in\Z_{>0}$.
    Note that $I \cdot F^n_*S=F^n_*I^{[p^n]}$.
    \item We say a scheme $X$ is a {\em variety} if $X$ is an integral scheme 
    that is separated and of finite type over a field $k$. 
    \item Let $K$ be a{\cred n} {\cred $F$-finite} field of characteristic $p > 0$.
    A subset $\{x_{i}\}_{i} \subseteq K$ is called a {\em p-basis of $K$} if 
    \[\{F_*(\prod_{i} x_{i}^{e_{i}})|0 \leq e_{i} \leq p-1
    \}\]
    form a basis of $F_*K$ over $K$.
    \item Let $A$ be a ring.
    Since the polynomial ring $A[x_1,\ldots,x_N]$ is a free $A$-module and the monomials form a basis,
    we obtain the unique decomposition
    \[
    f=\sum a_i M_i,
    \]
    for $f \in A$, 
    where $a_i \in A$ and $M_i$ is a monic monomial such that $M_i \neq M_j$ if $i \neq j$.
    The decomposition is called the \emph{monomial decomposition} of $f$ over $A$.
\end{itemize}

\begin{ack}
The authors wish to express their gratitude to Hiromu Tanaka, Fuetaro Yobuko, and Jakub Witaszek for valuable discussion.
They are also grateful to Shunsuke Takagi, Kenta Sato, Masaru Nagaoka, Naoki Imai, Tetsushi Ito, Yuya Matsumoto, Ippei Nagamachi, and Yukiyoshi Nakkajima and Jack J. Garzella for helpful comments.
{\cred
Finally, the authors would like to express their sincere gratitude to the referee for carefully reviewing the manuscript and providing valuable comments.}
Kawakami was supported by JSPS KAKENHI Grant number JP19J21085, JP22KJ1771, and JP24K16897.
Takamatsu was supported by JSPS KAKENHI Grant number JP19J22795. 
Yoshikawa was supported by JSPS KAKENHI Grant number JP20J11886.
\end{ack}

\section{Preliminaries}

\subsection{The ring of Witt vectors}\label{subsec:Witt vectors}
First, we recall the definition and generalities of the ring of Witt vectors. The basic reference is \cite[II, \S 6]{Serre}.

\begin{lem}
\label{lem:witt polynomial}
For $n\in \Z_{\geq 0}$, we define the polynomial ${\cred w}_{n} \in \Z[X_{0},\ldots X_{n}]$ by
\[
{\cred w}_{n} := \sum_{i=0}^{i=n} p^i X_{i}^{p^{n-i}}.
\]
Then, there exist polynomials $S_{n} ,P_{n} \in \Z[X_{0},\ldots, X_{n}, Y_{0}, \ldots, Y_{n}]$ for any $n \in \Z_{\geq 0}$ such that
\[
{\cred w}_{m} (S_{0}, \ldots, S_{m}) = {\cred w}_{m}(X_{0}, \ldots X_{m}) +{\cred w}_{m}(Y_{0}, \ldots Y_{m})
\]
and 
\[
{\cred w}_{m} (P_{0}, \ldots, P_{m}) = {\cred w}_{m}(X_{0}, \ldots X_{m}) \cdot {\cred w}_{m}(Y_{0}, \ldots Y_{m})
\]
hold for any $m \in \Z_{\geq 0}$.
\end{lem}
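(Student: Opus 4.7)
The plan is to prove existence by inverting the ghost-component map over $\mathbb{Q}$, and then separately establish integrality via a congruence argument. Uniqueness follows from the observation that the map
\[
\psi_n \colon (X_0,\ldots,X_n) \longmapsto (\varphi_0(X),\ldots,\varphi_n(X))
\]
is a polynomial bijection on $\mathbb{Q}[X_0,\ldots,X_n]$; indeed, the identity
\[
\varphi_n(X_0,\ldots,X_n) \;=\; \varphi_{n-1}(X_0^p,\ldots,X_{n-1}^p) + p^n X_n
\]
allows one to solve for $X_n$ recursively in terms of $\varphi_0,\ldots,\varphi_n$ after inverting $p$. Feeding the sequence $(\varphi_m(X)+\varphi_m(Y))_m$ (respectively $(\varphi_m(X)\varphi_m(Y))_m$) into this inversion produces unique $S_n, P_n \in \mathbb{Q}[X_\bullet, Y_\bullet]$ satisfying the required identities. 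The real content of the lemma is that these polynomials in fact have \emph{integral} coefficients.

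The essential tool for integrality is the elementary congruence: if $f,g \in \mathbb{Z}[\underline{T}]$ with $f \equiv g \pmod{p^r}$, then $f^p \equiv g^p \pmod{p^{r+1}}$ (expand $(g + p^r h)^p$ and observe that $p \mid \binom{p}{k}$ for $1 \le k \le p-1$). Iterating yields $f^{p^k} \equiv g^{p^k} \pmod{p^{r+k}}$.

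I would then run an induction on $n$ to show $S_n \in \mathbb{Z}[X_\bullet, Y_\bullet]$; the treatment of $P_n$ is parallel. Assume $S_0,\ldots,S_{n-1} \in \mathbb{Z}[X_\bullet, Y_\bullet]$ satisfy $\varphi_{n-1}(S_0,\ldots,S_{n-1}) = \varphi_{n-1}(X)+\varphi_{n-1}(Y)$. Let $\Phi$ denote the ring endomorphism of $\mathbb{Z}[X_\bullet, Y_\bullet]$ that raises every variable to the $p$-th power. Applying $\Phi$ to the induction hypothesis gives $\varphi_{n-1}(\Phi(S_0),\ldots,\Phi(S_{n-1})) = \varphi_{n-1}(X^p) + \varphi_{n-1}(Y^p)$. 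The Frobenius identity for polynomials $\pmod{p}$ gives $S_i^p \equiv \Phi(S_i) \pmod{p}$, and the congruence lemma then yields
\[
p^i (S_i^p)^{p^{n-1-i}} \;\equiv\; p^i \Phi(S_i)^{p^{n-1-i}} \pmod{p^n}
\]
for each $i$. Summing over $i$ shows $\varphi_{n-1}(S_0^p,\ldots,S_{n-1}^p) \equiv \varphi_{n-1}(X^p) + \varphi_{n-1}(Y^p) \pmod{p^n}$. Combined with $\varphi_n(X)+\varphi_n(Y) = \varphi_{n-1}(X^p)+\varphi_{n-1}(Y^p) + p^n(X_n + Y_n)$, this gives
\[
\varphi_n(X)+\varphi_n(Y) - \sum_{i=0}^{n-1} p^i S_i^{p^{n-i}} \;\equiv\; 0 \pmod{p^n},
\]
so the $S_n$ defined by this relation lies in $\mathbb{Z}[X_\bullet, Y_\bullet]$, completing the induction.

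The main obstacle is tracking the congruence with the correct $p$-adic precision: one needs to propagate the single factor of $p$ in $S_i^p \equiv \Phi(S_i) \pmod{p}$ through $n-1-i$ applications of the $p$-th-power lemma to gain exactly $p^{n-1-i}$ additional factors, and then multiplying by the $p^i$ prefactor in the Witt polynomial reassembles the divisibility by $p^n$ needed to show $S_n$ is integral. Any miscounting here produces only $p^{n-1}$-divisibility and the argument collapses.
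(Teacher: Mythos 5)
Your proof is correct. The paper does not prove this lemma at all---it simply cites \cite[II, Theorem~6]{Serre}---and the argument you give (invert the ghost map over $\Q$ for existence and uniqueness, then establish integrality by induction via the congruence $f\equiv g\ (\mathrm{mod}\ p^r)\Rightarrow f^p\equiv g^p\ (\mathrm{mod}\ p^{r+1})$, applied to $S_i^p\equiv\Phi(S_i)\ (\mathrm{mod}\ p)$ and the identity $\varphi_n(T)=\varphi_{n-1}(T_0^p,\ldots,T_{n-1}^p)+p^nT_n$) is precisely the standard proof found in that reference, with the $p$-adic bookkeeping carried out correctly; the same reasoning applied to $\varphi_n(X)\varphi_n(Y)-\varphi_{n-1}(X^p)\varphi_{n-1}(Y^p)=p^n\bigl(X_n\varphi_{n-1}(Y^p)+Y_n\varphi_{n-1}(X^p)+p^nX_nY_n\bigr)$ handles $P_n$ as you indicate.
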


\begin{proof}
See \cite[II, Theorem 6]{Serre}.
\end{proof}

\begin{defn}
\label{defn:Witt ring}
Let $A$ be a ring.
We put
\[
W(A) := \{
(a_{0}, \ldots, a_{n}, \ldots) | a_{n} \in A
\} = \prod_{\Z_{\geq 0}} A,
\]
equipped with the addition
\[
(a_{0}, \ldots, a_{n}, \ldots) + (b_{0}, \ldots, b_{n}, \ldots) := (S_{0}(a_{0},b_{0}), \ldots, S_{n}(a_{0}, \ldots, a_{n}, b_{0}, \ldots, b_{n}), \ldots)
\]
and the multiplication
\[
(a_{0}, \ldots, a_{n}, \ldots) \cdot (b_{0}, \ldots, b_{n}, \ldots) := (P_{0}(a_{0},b_{0}), \ldots, P_{n}(a_{0}, \ldots, a_{n}, b_{0}, \ldots, b_{n}), \ldots).
\]
Then $W(A)$ is a ring with $1= (1, 0, \ldots) \in W(A)$, which is called the ring of Witt vectors over $A$.
We define an additive map $V$ by
\[
V \colon W(A) \rightarrow W(A); (a_{0}, a_{1}, \ldots) \mapsto (0,a_{0}, a_{1}, \ldots).
\]

We define the ring $W_{n}(A)$ of Witt vectors over $A$ of length $n$ by
\[
W_{n}(A) := W(A)/ V^{n}W(A).
\]
We note that $V^{n}W(A)\subseteq W(A)$ is an ideal. 
We define a $W(A)$-module homomorphism $R$ by
\[
W(A) \rightarrow W_{n}(A) ; (a_{0}, a_{1}, \ldots) \mapsto (a_{0}, a_{1}, \ldots, a_{n-1})
\]
and call $R$ the restriction map.
Moreover, we define 
\begin{eqnarray*}
V \colon W_{n}(A) \rightarrow W_{n+1}(A),
\end{eqnarray*}
and
\[
R \colon W_{n+1} (A) \rightarrow W_{n}(A),
\]
in a similar way.
The image of $\alpha \in W(A)$ or $W_n(A)$ by $V$ is denoted by $V\alpha$.

For any ring homomorphism $f \colon A \to A'$, a ring homomorphism
\[
W(A) \to W(A')\ ;\ (a_0,a_1,\ldots) \mapsto (f(a_0),f(a_1),\ldots)
\]
is induced, and the induced homomorphism is commutative with $R$ and $V$.
Note that $f$ also induces a ring homomorphism $W_n(A) \to W_n(A')$.

For an ideal $I$ of $A$,
we denote the kernels of $W(A) \to W(A/I)$ and $W_n(A) \to W_n(A/I)$ induced by the natural quotient map $A \to A/I$ are denoted by $W(I)$ and $W_n(I)$, respectively.
Suppose that $A$ is a ring of characteristic $p>0$.
{\cred The ring homomorphism} 
\[
W(A) \to W(A),\ \text{and}\ W_n(A) \to W_n(A)
\]
{\cred induced}
by the Frobenius morphism is also denoted by $F$ and called the Frobenius morphism.
The image of $\alpha \in W(A)$ or $W_n(A)$ by $F$ is denoted by $F\alpha$.

We have
\[
FV = VF = p \in \Hom (W(A), W(A)).
\]
We define rings $\var{W}(A)$ and $\var{W}_n(A)$ by
\[
\var{W}(A):=W(A)/pW(A),\ \text{and}\ \var{W}_n(A):=W_n(A)/pW_n(A).
\]
Then we can define maps $V$, $R$, and $F$ on $\var{W}(A)$ and $\var{W}_n(A)$ in a way similar to the case of $W(A)$ and $W_n(A)$.
We can also define ideals $\var{W}(I)$ and $\var{W}_n(I)$ in a similar way.

Let $X$ be an $\F_p$-scheme.
We define sheaves $W_n\sO_X$ and $W\sO_X$ on $X$ by
\[
W\sO_X(U):=W(\sO_X(U)),\ \text{and}\ W_n\sO_X(U):=W_n(\sO_X(U)).
\]
Then we can define $V$, $R$, $F$, $\var{W}_n\sO_X$, and $\var{W}\sO_X$ in a way similar to the case of rings.
We denote the ringed space $(X,W_n\sO_X)$ by $W_nX$, and we can see that this is actually a scheme.
For a morphism $\pi \colon Y \to X$ of schemes, a morphism $W_nY \to W_nX$ of schemes is induced and denoted by $W_n\pi$.
\end{defn}

\begin{rmk}
\label{rmk:ghost component}
We define a ring homomorphism ${\cred w}$ by
\[
{\cred w} \colon W(A) \rightarrow \prod_{\Z_{\geq 0}} A; (a_{0}, \ldots, a_{n}, \ldots) \mapsto ({\cred w}_{0}(a_{0}), \ldots, {\cred w}_{n}(a_{0}, \ldots, a_{n}), \ldots), 
\]
where $\prod_{\Z_{\geq 0}} A$ is the product in the category of rings.
The map ${\cred w}$ is injective if $p \in A$ is a non-zero divisor.
The element ${\cred w}_{n}(a_{0}, \ldots, a_{n})$ (resp.\,the element ${\cred w}(a_{0}, \ldots a_{n}, \ldots)$) is called the ghost component (resp.\,the vector of ghost components) of $(a_{0}, \ldots a_{n}, \ldots) \in W(A)$.
\end{rmk}

\begin{lem}\label{lem:finiteness}
Let $R$ be an $F$-finite ring of positive characteristic ${\cred p}$ and $n$ a positive integer.
Then $F \colon W_n(R) \to F_*W_n(R)$ is a finite $W_n(R)$-module homomorphism.
\end{lem}

\begin{proof}
Let $v_1,\ldots,v_d$ be a generator of $F^n_*R$ over $R$.
We prove that a set
\[
\{V^{r-1}[v_i] \mid 1 \leq i \leq d,\ 0 \leq r \leq n-1\}
\]
generates $F_*W_n(R)$ as a $W_n(R)$-module.
 
We take a positive integer $r \in [1, n]$ and $a \in R$. 
Since $v_1,\ldots,v_d$ be a generator of $F^r_*R$ over $R$, we can take $a_i\in R$ so that $a=\sum a_i^{p^r} v_i$.
Then the $i$-th component of $\sum [a_i^p]V^{r-1}[v_i]$ 
is equal to zero for $i<r$ and is equal to $a$, and we conclude the assertion.
\end{proof}

\subsection{Quasi-$F$-splitting}
In this subsection, we gather basic properties of quasi-$F$-splitting.

\begin{defn}[\cite{Yobuko2}]
\label{defn:quasi-F-split}
Let $X$ be an $\F_p$-scheme.
For a positive integer $n$, we say that $X$ is \textit{$n$-quasi-$F$-split} if 
there exists a $W_n\sO_X$-homomorphism $\phi \colon F_*W_n\sO_X \to \sO_X$ satisfying the following commutative diagram
\[
\xymatrix{
W_n\sO_X \ar[r]^-{F} \ar[d]_-{R^{n-1}} & F_* W_n \ar@{-->}[ld]^{\exists \phi} \sO_X \\
\sO_X.
}
\]

We define the \emph{quasi-$F$-split height} $\sht(X)$ by the infimum of positive integers $n$ such that $X$ is $n$-quasi-$F$-split.
If such an $n$ does not exist, then we set $\sht(X)=\infty$.
We say that $X$ is \emph{quasi-$F$-split} when $\sht(X)$ is finite.
\end{defn}

\begin{rmk}
If $\sht(X)=1$, then we simply say $X$ is \emph{$F$-split}.
By definition, $X$ is $F$-split if and only if the Frobenius map
$\sO_X \to F_*\sO_X$
splits as an $\sO_X$-module homomorphism.
\end{rmk}

\begin{defn}\label{defn:Q}
Let $X$ be an $\F_p$-scheme.
We define a $W_n\sO_X$-module $Q_{X,n}$ by the pushout of the diagram
\[
\xymatrix{
W_n\sO_X \ar[r]^-{F} \ar[d]_{R^{n-1}} & F_*W_n\sO_X \ar[d]  \\
\sO_X \ar[r] & Q_{X,n} \pushoutcorner
}
\]
as $W_n\sO_X$-modules.
Then we have $Q_{X,n} \cong F_*\var{W}_n\sO_X$.
We note that $Q_{X,1}$ is $F_*\sO_X$.
Then $Q_{X,n}$ has an $\sO_X$-module structure 
{\cred
given by
\[
a \cdot F_{\ast} \alpha := [a] F_\ast\alpha = F_\ast ([a^p] \alpha)
\]
for $a \in \sO_X$ and $\alpha \in W_{n} \sO_{X}$, and}
the lower horizontal map in the above diagram 
\[
\sO_X \to Q_{X,n} \ ;\ a \mapsto {\cred F_*} [a^p],
\]
can be seen as an $\sO_X$-module homomorphism.
{\cred The $F$-action on $W_n\sO_X$ are induced to $Q_{X,n}$, which is also denoted by $F$.
Also, $V\colon F_\ast W_n \sO_X \rightarrow W_{n+1} \sO_X$ induces the {\cred $\sO_{X}$-module homomorphism}
\[
F_{\ast} Q_{X,n} \rightarrow Q_{X.n+1},
\]
which is also denoted by $V$.
}

Next, we define $Q_{X}$ by the pushout of the diagram
\[
\xymatrix{
W\sO_X \ar[r]^-{F} \ar[d] & F_*W\sO_X \ar[d] \\
\sO_X \ar[r]& Q_X \pushoutcorner 
}
\]
as $W\sO_X$-modules.
Then we have $Q_X \cong F_*\var{W}\sO_X$, and the lower horizontal map can be seen as $\sO_X$-module homomorphism in a way similar to above.
In addition, the $V$-action and $F$-action on $W\sO_X$ are induce to $Q_{X}$, which are also denoted by $V$ and $F$.

When $X$ is an affine scheme $X=\Spec R$, we denote $Q_{X,n}$ and $Q_X$ by $Q_{R,n}$ and $Q_{R}$, respectively.
\end{defn}

\begin{prop}\label{prop:witt-bar description}
Let $X$ be an $\F_p$-scheme and $n$ a positive integer.
Then $X$ is $n$-quasi-$F$-split if and only if the map
\[
\sO_X \to Q_{X,n}\ ;\ a \mapsto {\cred F_*} [a^p]
\]
splits as an $\sO_X$-module homomorphism.
\end{prop}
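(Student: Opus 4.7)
The plan is to invoke the universal property of the pushout defining $Q_{X,n}$. By this property, a $W_n\sO_X$-linear map $\psi \colon Q_{X,n} \to \sO_X$, with $\sO_X$ viewed as a $W_n\sO_X$-module via $R^{n-1}$, is equivalent to the data of a $W_n\sO_X$-linear map $\phi \colon F_*W_n\sO_X \to \sO_X$ together with a $W_n\sO_X$-linear endomorphism $\gamma \colon \sO_X \to \sO_X$ satisfying $\phi \circ F = \gamma \circ R^{n-1}$. Under this correspondence, the condition that $\psi$ splits the map $\sO_X \to Q_{X,n}$, $a \mapsto [a^p]$, translates to $\gamma = \id_{\sO_X}$, so the compatibility becomes $\phi \circ F = R^{n-1}$, which is precisely the defining property of an $n$-quasi-$F$-splitting from Definition \ref{defn:quasi-F-split}.

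To finish, I must identify ``$\sO_X$-linear splitting'' (as used in the statement) with ``$W_n\sO_X$-linear splitting'' (as produced by the universal property). For this I would check that the $W_n\sO_X$-action on both $Q_{X,n}$ and $\sO_X$ factors through $R^{n-1} \colon W_n\sO_X \to \sO_X$. On $\sO_X$ this holds by construction. For $Q_{X,n} \simeq F_*\var{W}_n\sO_X$, the relevant ideal is $\ker(R^{n-1}) = V W_{n-1}\sO_X$: for $w = V u$ one has $F(w) = F V u = p u$, and the $W_n\sO_X$-action on $F_*\var{W}_n\sO_X$ is by $F(w)$-multiplication, which vanishes since $p = 0$ in $\var{W}_n\sO_X = W_n\sO_X / p W_n\sO_X$. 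The induced $\sO_X$-action is exactly $a \cdot \alpha = [a^p]\alpha$ as described in the text, since a lift of $a$ is $[a]$ and $F[a] = [a^p]$. With matching module structures on source and target, $\sO_X$-linearity and $W_n\sO_X$-linearity of maps $Q_{X,n} \to \sO_X$ coincide.

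Combining the two observations yields the proposition: given an $n$-quasi-$F$-splitting $\phi$, the pair $(\phi, \id_{\sO_X})$ assembles through the pushout into a $W_n\sO_X$-linear (hence $\sO_X$-linear) splitting of $\sO_X \to Q_{X,n}$; conversely, an $\sO_X$-linear splitting promotes to a $W_n\sO_X$-linear one and composes with $F_*W_n\sO_X \to Q_{X,n}$ to give a $\phi$ fitting into the quasi-$F$-split diagram. The only non-formal step is the module-structure identification in the second paragraph; once that is settled, everything is a direct application of the universal property of the pushout.
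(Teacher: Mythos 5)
Your proposal is correct and is precisely the argument the paper intends: the paper's proof is the single sentence ``It follows from the universal property of a pushout square,'' and you have correctly fleshed out both the formal pushout bijection (a $W_n\sO_X$-linear map out of $Q_{X,n}$ is a compatible pair $(\phi,\gamma)$, with the splitting condition forcing $\gamma=\id$) and the non-formal check that the $W_n\sO_X$-action on $Q_{X,n}\simeq F_*\var W_n\sO_X$ factors through $R^{n-1}$, so that $\sO_X$-linearity and $W_n\sO_X$-linearity agree.

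One small imprecision: for $w=Vu$ with $u\in W_{n-1}\sO_X$ the expression $FVu=pu$ is not literally well-typed, since $u$ lives in $W_{n-1}$; what one actually gets is $F(Vu)=V(Fu)\in pW_n\sO_X$ (because $pW_n\sO_X=V(F(W_{n-1}\sO_X))$), which still kills the action on $\var W_n\sO_X$, so your conclusion stands.
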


\begin{proof}
The assertion follows from the universal property of a pushout square.
\end{proof}

\begin{rmk}\label{rmk:evaluation}
Let $X$ be an $\F_p$-scheme and $n$ a positive integer.
Then $X$ is $n$-quasi-$F$-split if and only if the evaluation map
\[
\Hom_{W_nX}(F_*W_n\sO_X,\sO_X) \to H^0(X,\sO_X)
\]
is surjective.
By Proposition \ref{prop:witt-bar description}, they are also equivalent to the surjectivity of the evaluation map
\[
\Hom_{X} (Q_{X,n},\sO_X) \to H^0(X, \sO_X).
\]
\end{rmk}

\begin{prop}\label{prop:exact sequence of Q_n's}
Let $X$ be an $\mathbb{F}_p$-scheme and $n$ a positive integer.
Then we have the following exact sequence of $\sO_X$-modules
\[
0 \to F^{n}_*(F_*\sO_X/\sO_X) \to Q_{X,n+1} \to Q_{X,n}  \to 0.
\]
\end{prop}
\begin{proof}
By the universality of the pushout $Q_{X,n+1}$, we have a surjective $\sO_X$-module homomorphism $Q_{X,n+1}\to Q_{X,n}$.
We note that this homomorphism coincides with the map $\var{W}_{n+1}\sO_X\to \var{W}_{n}\sO_X$ induced by the restriction $R\colon W_{n+1}\sO_X\to W_{n}\sO_X$.
We denote the map $Q_{X,n+1}\to Q_{X,n}$ by $\pi$.
We have the following commutative diagram
\[
\xymatrix{
 0\ar[r]&\sO_X\ar[r]\ar[d]^{=} &Q_{X,n+1}\ar[r]\ar@{->>}[d]^{\pi}& F_{*}W_{n+1}\sO_X/W_{n+1}\sO_X\ar[r]\ar@{->>}[d]^{R}& 0\\
 0\ar[r]&\sO_X\ar[r] & Q_{X,n}\ar[r]& F_{*}W_{n}\sO_X/W_{n}\sO_X\ar[r]& 0,\\
}
\]
where the horizontal sequences are exact and the right vertical map is induced by the restriction.
By the snake lemma, we have \[\mathrm{Ker}(\pi\colon Q_{X,n+1}\to Q_{X,n})=\mathrm{Ker}(R\colon F_{*}W_{n+1}\sO_X/W_{n+1}\sO_X\to F_{*}W_{n}\sO_X/W_{n}\sO_X).\]
Using an exact sequence
\[
0\to F_{*}^{n}\sO_X\overset{V^n}{\to} W_{n+1}\sO_X \overset{R}{\to} W_{n}\sO_X\to 0
\]
and the snake lemma again, we have
\[
\mathrm{Ker}(R\colon F_{*}W_{n+1}\sO_X/W_{n+1}\sO_{X} \to F_{*}W_{n}\sO_X/W_{n}\sO_X) \cong F^{n}_*(F_*\sO_X/\sO_X),
\]
as desired.
\end{proof}

\begin{prop}\label{prop:multigraded ring versus local ring}
Let $S$ be an $F$-finite $\mathbb{Z}_{\geq 0}$-graded ring such that $S_0$ is a field $k$ of positive characteristic. 
Let $\m:=\bigoplus_{h \neq 0} S_h$.
Then we have $\sht(S)=\sht(S_\m)$.
\end{prop}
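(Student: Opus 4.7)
The plan is to prove both $\sht(S_\m) \leq \sht(S)$ and $\sht(S) \leq \sht(S_\m)$ simultaneously, by studying, for each positive integer $n$, the evaluation map
\[
\mathrm{ev}_R\colon \Hom_{W_n(R)}(F_*W_n(R),R) \to R, \quad \phi \mapsto \phi(F_*1),
\]
for $R = S$ or $R = S_\m$. By Remark~\ref{rmk:evaluation}, $\sht(R) \leq n$ if and only if $\mathrm{ev}_R$ is surjective.

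First, I exploit the graded structure. By Lemma~\ref{lem:multigraded structure on Witt ring}, $W_n(S)$ carries a natural $\tfrac{1}{p^{n-1}}\Z_{\geq 0}^m$-grading, and the Frobenius $F\colon W_n(S)\to W_n(S)$ is a graded ring homomorphism multiplying degrees by $p$ (since $F([a])=[a^p]$). Consequently, $F_*W_n(S)$ becomes a graded $W_n(S)$-module with grading in $\tfrac{1}{p^n}\Z_{\geq 0}^m$. Since $S$ is $F$-finite, $F_*W_n(S)$ is finitely generated over the Noetherian ring $W_n(S)$ (Lemma~\ref{lem:finiteness}, \cite[Proposition~A.4]{LZ}), hence finitely presented. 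Therefore
\[
M := \Hom_{W_n(S)}(F_*W_n(S), S) = \bigoplus_{d} M_d
\]
is a graded $W_n(S)$-module, and via $W_n(S) \twoheadrightarrow S$ a graded $S$-module. The evaluation map $\mathrm{ev}_S$ is then a graded $S$-linear map of degree zero (as $F_*1$ has degree $0$), so its image is a graded $S$-submodule, i.e. a graded ideal $I \subset S$. Hence $C := \mathrm{coker}(\mathrm{ev}_S) = S/I$ is a cyclic graded $S$-module.

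Second, I compare with $S_\m$ via flat base change. The ring map $W_n(S) \to W_n(S_\m)$ is a flat localization (obtained by inverting the Teichm\"uller lifts $\{[s] : s \in S \setminus \m\}$), and we have natural isomorphisms $F_*W_n(S) \otimes_{W_n(S)} W_n(S_\m) \cong F_*W_n(S_\m)$ and $S \otimes_{W_n(S)} W_n(S_\m) \cong S_\m$. Since $F_*W_n(S)$ is finitely presented, Hom commutes with this localization, yielding $C \otimes_S S_\m \cong \mathrm{coker}(\mathrm{ev}_{S_\m})$. Now, since $S_0 = k$ is a field, $\m$ is the unique maximal graded ideal of $S$; for the graded ideal $I$ this gives
\[
I = S \iff I \not\subset \m \iff I S_\m = S_\m \iff C \otimes_S S_\m = 0.
\]
Combining with Remark~\ref{rmk:evaluation} we obtain
\[
\sht(S) \leq n \iff C = 0 \iff C \otimes_S S_\m = 0 \iff \sht(S_\m) \leq n,
\]
and hence $\sht(S) = \sht(S_\m)$.

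The main technical obstacle I expect is the localization compatibility in the second step, namely the flatness of $W_n(S) \to W_n(S_\m)$ and the identification $S \otimes_{W_n(S)} W_n(S_\m) \cong S_\m$. The latter reduces to showing $V W_n(S) \cdot W_n(S_\m) = V W_n(S_\m)$, which can be verified using the Verschiebung formula $V(a) \cdot b = V(a \cdot F(b))$ together with the fact that $[s]$ is invertible in $W_n(S_\m)$ with inverse $[s^{-1}]$ for every $s \in S \setminus \m$. Once these Witt-vector formalities are in place, every remaining step is a formal consequence of the graded-ideal dichotomy recalled above.
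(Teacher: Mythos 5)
Your proof is correct and follows essentially the same route as the paper: both arguments hinge on the observation that $M := \Hom_{W_n(S)}(F_*W_n(S),S)$ inherits a multigraded $W_n(S)$-module structure, that the evaluation map $\mathrm{ev}_S$ is a degree-zero graded map so its image is a graded ideal, and that since $S_0 = k$ is a field the only graded ideal not contained in $\m = \bigoplus_{h\neq 0}S_h$ is $S$ itself. The paper's own proof is terse (one sentence asserting the equivalence of the two surjectivities after noting the graded structure); what you add is an explicit justification of the localization compatibility --- that $W_n(S)\to W_n(S_\m)$ is the flat localization at $\{[s]:s\notin\m\}$, that $S\otimes_{W_n(S)}W_n(S_\m)\cong S_\m$ and $F_*W_n(S)\otimes_{W_n(S)}W_n(S_\m)\cong F_*W_n(S_\m)$, and that $\Hom$ commutes with flat localization for the finitely presented module $F_*W_n(S)$ --- all of which the paper leaves implicit. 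These are correct (they are the affine, multiplicative-set analogue of the base-change checks the paper does carry out in Lemma~\ref{lem:etale base change for Witt Frobenius} and Proposition~\ref{prop:completion for witt ring}), so your write-up is a legitimate and somewhat more careful rendering of the intended argument rather than a genuinely different one.
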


\begin{proof}
By \cite[Proposition 7.1]{KTTWYY2}, we can define a graded structure on $W_n(S)$.
Therefore, we can see that $\Hom_{W_n(S)}(F_*W_n(S),S)$ has a graded $W_n(S)$-module structure.
Thus, the surjectivity of $\Hom_{W_n(S)}(F_*W_n(S),S) \to S$ is equivalent to that of $\Hom_{W_n(S_\m)}(F_*W_n(S_\m),S_\m) \to S_\m$, and we conclude that $\sht(S)=\sht(S_\m)$ by Remark \ref{rmk:evaluation}.
\end{proof}

\subsection{Regular sequence and colon ideals}
The following lemma is well-known, but we include the proof for the convenience of the reader.
We need this lemma for the proof of Theorem \ref{Intro:thm:Fedder's criterion}.

\begin{lem}\label{lem:colon rule for regular sequence}
Let $(R,\m)$ be a Noetherian local ring.
Let $f_1,\ldots,f_m$ be a regular sequence {\cred in $\m$}, $f:=f_1 \cdots f_m$, $I:=(f_1,\ldots,f_m)$, and $I_s:=(f_1^s,\ldots,f_m^s)$.
Then, for positive integers $s,r\in\Z_{>0}$ with $s>r$, we have
\begin{itemize}
    \item[\textup{(1)}] $(I_s \colon I)=f^{s-1}R+I_s$,
    \item[\textup{(2)}] $(I_s \colon f^{s-r})=I_r$, and
    \item[\textup{(3)}] $(I_s \colon f^{s-r}I)=((I_s \colon I) \colon f^{s-r})=(I_r \colon I)$.
\end{itemize}
\end{lem}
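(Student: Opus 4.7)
The plan is to prove (i) and (ii) by induction on the length $m$ of the regular sequence, and to deduce (iii) formally from (ii) via the colon-ideal identity $(J:ab)=((J:a):b)$. Throughout I rely on two standard facts about Noetherian local rings: any permutation of a regular sequence is again regular, and any sequence of positive powers of a regular sequence is again regular. These guarantee that the auxiliary sequences which appear in the induction step are regular in the relevant quotients, which is essential for dividing by them.

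For (i), the inclusion $f^{s-1}R+I_s\subseteq(I_s:I)$ is immediate from $f^{s-1}f_j=f_1\cdots f_j^s\cdots f_m\in I_s$. For the reverse, given $g\in(I_s:I)$, applying $gf_m\in I_s$ and using that $f_m$ is a nonzerodivisor modulo $(f_1^s,\ldots,f_{m-1}^s)$, I can write $g=a_mf_m^{s-1}+h$ with $h\in(f_1^s,\ldots,f_{m-1}^s)$. Then the conditions $gf_j\in I_s$ for $j<m$, combined with another division by $f_m^{s-1}$ (again regular on the same quotient), reduce the problem to showing $\bar a_m\in((\bar f_1^s,\ldots,\bar f_{m-1}^s):(\bar f_1,\ldots,\bar f_{m-1}))$ inside $R/(f_m)$. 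The induction hypothesis yields $\bar a_m\equiv\lambda\bar f_1^{s-1}\cdots\bar f_{m-1}^{s-1}$ modulo $(\bar f_1^s,\ldots,\bar f_{m-1}^s)$, and lifting and multiplying by $f_m^{s-1}$ places $g$ in $f^{s-1}R+I_s$, as required. For (ii), the inclusion $I_r\subseteq(I_s:f^{s-r})$ is a direct computation. For the converse, starting from $gf^{s-r}\in I_s$, I peel off the $f_m^s$ contribution to obtain $(g\prod_{j<m}f_j^{s-r}-a_mf_m^r)f_m^{s-r}\in(f_1^s,\ldots,f_{m-1}^s)$; dividing by $f_m^{s-r}$ and passing to $R/(f_m^r)$, the induction hypothesis forces $g\in(f_1^r,\ldots,f_{m-1}^r,f_m^r)=I_r$. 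Finally, (iii) is a formal consequence of colon-ideal associativity together with (ii): $(I_s:f^{s-r}I)=((I_s:I):f^{s-r})=((I_s:f^{s-r}):I)=(I_r:I)$.

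The main obstacle is the careful bookkeeping in the inductive arguments for (i) and (ii), in particular verifying at each step that the element we wish to divide by (such as $f_m^{s-1}$ or $f_m^{s-r}$ modulo $(f_1^s,\ldots,f_{m-1}^s)$) is a nonzerodivisor, and that the lift of the expression obtained from the induction hypothesis in the quotient $R/(f_m)$ or $R/(f_m^r)$ back to $R$ yields a representative of the desired shape. Once the regularity of these auxiliary sequences is established via permutability and closure under taking powers, the remaining manipulations are purely computational.
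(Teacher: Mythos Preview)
Your proposal is correct and follows essentially the same route as the paper. The only structural difference is in part~(i): you organise the argument as an induction on $m$, passing to $R/(f_m)$ and invoking the statement for the shorter sequence $\bar f_1,\ldots,\bar f_{m-1}$, whereas the paper stays in $R$ and iteratively peels off one factor at a time---from $xf_1\in I_s$ it extracts $x\equiv f_1^{s-1}x_1\bmod I_s$, then from $xf_2\in I_s$ it extracts $x\equiv f_1^{s-1}f_2^{s-1}x_2\bmod I_s$, and so on. Both arguments rest on the same regularity facts (permutability and closure under powers) and are of equal difficulty; your inductive framing is perhaps slightly cleaner to write down, while the paper's iterative version avoids the quotient. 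Parts~(ii) and~(iii) are handled identically in both.
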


\begin{proof}
First, we prove (1).
The inclusion $f^{s-1}R+I_s \subseteq (I_s \colon I)$ is clear.
We take an element $x \in R$ such that $xI \subseteq I_s$.
{\cred
Note that $\overline{f_1}$ is a regular element in $R/(f_2^s, \ldots, f_m^s).$
Since we have $xf_1 \in I_s$, we have $\overline{x} \overline{f_1} \in (\overline{f_1}^s)$ in $R/(f_2^s, \ldots, f_m^s).$
Therefore, we have $\overline{x} \in \overline{f_1}^{s-1}$, that means, $x\in I_s + f_1^{s-1}R$.}
Replacing $x$ with another representative of $x+ I_{s}$, we may assume that $x =f_1^{s-1}x_1$ for some ${\cred x_1 \in R}$.
{\cred Since $xf_2 \in I_s$, we have $\overline{f_1}^{s-1}\overline{x_1} \overline{f_2} \in (\overline{f_1}^s)$ in $R/(f_2^s, \ldots, f_m^s)$. Therefore, we have $x_1 f_2 \in (f_1, f_2^s, \ldots, f_m^s)$. On the other hand, since we have $\overline{x_1} \overline{f_2} \in (\overline{f_2}^s)$ in $R/(f_1, f_3^s, \ldots, f_m^s)$, we have $x_1 \in (f_1, f_2^{s-1}) + I_s$.}
Thus replacing $x$ with another representative of $x+I_{s}$, we may assume that $x=f_1^{s-1}f_2^{s-1}x_2$ for some $x_2$.
Repeating this procedure, we obtain $x \in f^{s-1}R+I_s$.

Next, we prove (2). 
The inclusion $I_r \subseteq (I_s \colon f^{s-r})$ is clear.
Let $J:=(f_1,\ldots,f_{m-1})$ and $J_{s}:=(f_1^{s}, \ldots, f_{m-1}^{s})$.
If $xf^{s-r} \in I_s$, then $x (f/f_m)^{s-r} \in (J_s,f_m^{r})$, and thus $x \in (J_s \colon (f/f_m)^{r-s})$ mod $f_m^rR$.
By induction on $m$, we obtain $x \in (f_m^r,J_r)=I_r$. 

Finally, we prove (3).
The first equation is easy.
If $xf^{s-r}I \subseteq I_s$, then $xI \subseteq (I_s \colon f^{s-r})=I_r$ by (2), and thus $x \in (I_r \colon I)$.
If $xI \subseteq I_r$, then $xf^{s-r}I \subseteq f^{s-r}I_r \subseteq I_s$.
Thus, we conclude the second equality.
\end{proof}

\section{Structure of $\Hom_{S}(Q_{S,n}, \sO_S)$}\label{Section:Construction of splitting maps}
In this section, we define a map
\[
\Delta_W \colon \var{W}(R) \to \var{W}(R)/[F(R)]
\]
for a regular local ring $R$, which is a group homomorphism satisfying a variant of the Leibniz rule (Proposition \ref{prop:rule for delta map}).
Moreover, we prove a formula related to $\Delta_W$ (Theorem \ref{Intro:thm:delta formula}) that is essential for the proof of Fedder type criteria for quasi-$F$-splitting.
{\cred They are required to construct a splitting of $R$-module homomorphism}
\[
{\cred 
V^{n-1} \colon 
F^{n-1}_*(F_*R/R) \to Q_{R,n},}
\]
{\cred and, consequently, to obtain a description of $\Hom_R(Q_{R,n},R)$.}

\subsection{Construction of splitting maps}
In this subsection, we define the maps $\Delta_W$ and $\Delta_n$ for $n\in\Z_{>0}$.
When $R$ is a localization of a polynomial ring, they are defined using the monomial decomposition, and they measure how close a given element is to monomials.
In the general setting, we define \emph{$p$-monomials} and \emph{$p$-monomial decompositions}, {\cred which provide an alternative to the usual decompositions}.
Although the usual monomial decomposition is unique, a $p$-monomial decomposition is not unique.
This non-uniqueness plays the essential role when we prove that $\Delta_n$ induces a splitting of
\[
{\cred 
V^{n-1} \colon 
F^{n-1}_*(F_*R/R) \to Q_{R,n},}
\]
as $R$-modules.

\begin{lem}\label{lem:standard basis}
Let $(R,\m,k)$ be an $F$-finite regular local ring of characteristic $p>0$ with  residue field $k$.
Then there exist elements $x_1,\ldots,x_N \in R$ such that 
\[
\{F_*{\cred (}x_1^{i_1} \cdots x_N^{i_N} {\cred )} \mid 0 \leq i_1, \ldots ,i_N \leq p-1 \}
\]
is a basis of $F_*R$ over $R$.
\end{lem}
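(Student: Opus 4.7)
The plan is to build $x_1,\ldots,x_N$ by concatenating a regular system of parameters of $R$ with lifts of a $p$-basis of the residue field $k$, and then verify the basis property via Kunz, Nakayama, and a triangular change-of-basis argument.

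First, since $R$ is $F$-finite, so is $k$: the $k$-module $F_*k$ is finitely generated because it is a subquotient of the finite $R$-module $F_*R$. Thus $k$ admits a finite $p$-basis $\bar y_1,\ldots,\bar y_s$ with $s=\log_p[k:k^p]$, which I lift arbitrarily to $y_1,\ldots,y_s\in R$. I also pick a regular system of parameters $z_1,\ldots,z_d$ of $R$ and set $(x_1,\ldots,x_N):=(y_1,\ldots,y_s,z_1,\ldots,z_d)$, so $N=s+d$.

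By Kunz's theorem, $F_*R$ is free over $R$ of rank $[R:R^p]=[k:k^p]\cdot p^{\dim R}=p^N$. Since the candidate family already has exactly $p^N$ elements, Nakayama reduces the problem to showing its image spans the $k$-vector space $F_*R/\m F_*R=F_*(R/\m^{[p]})$; the rank count then upgrades generation to a basis. Here I use $\m^{[p]}=(z_1^p,\ldots,z_d^p)$ (since $\m=(z_1,\ldots,z_d)$), and the $k$-action is $\bar a\cdot F_*\bar r=F_*\overline{a^pr}$, well-defined because $(a-a')^p\in\m^{[p]}$ whenever $a-a'\in\m$.

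To analyze this $k$-vector space, I invoke the Cohen structure theorem for $\hat R\cong K[[z_1,\ldots,z_d]]$ with coefficient field $K\xrightarrow{\sim}k$. Because $R/\m^{[p]}$ is Artinian it equals its completion, giving $R/\m^{[p]}\cong K[z_1,\ldots,z_d]/(z_1^p,\ldots,z_d^p)=\bigoplus_{J} Kz^J$ as a $K$-vector space, where $J$ ranges over multi-indices with entries in $\{0,\ldots,p-1\}$. A class $\bar a\in k$ corresponds to a unique $\alpha_0\in K$ and acts on each summand by multiplication by $\alpha_0^p\in K^p$; since $\{\eta^{I_1}\}$ is a $K^p$-basis of $K$ (with $\eta_i\in K$ corresponding to $\bar y_i$ under $K\cong k$, and $I_1$ ranging over multi-indices in $\{0,\ldots,p-1\}^s$), the family $\{\eta^{I_1}z^J\}$ is a $k$-basis of $R/\m^{[p]}$ of cardinality $p^s\cdot p^d=p^N$.

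It remains to replace $\eta^{I_1}$ by the image $\bar y^{I_1}$ of $y^{I_1}\in R$ in $R/\m^{[p]}$. Writing $y_i=\eta_i+(\text{terms in }\m\hat R)$ in $\hat R$ and expanding, $\bar y^{I_1}z^J=\eta^{I_1}z^J+\sum_{J'>J}\beta_{I_1,J,J'}z^{J'}$ in $R/\m^{[p]}$, where $J'>J$ means $J'\ge J$ componentwise and $J'\ne J$, and $\beta\in K$. Refining the componentwise partial order on $J$ to a lexicographic total order, the change-of-basis matrix from $\{\bar y^{I_1}z^J\}$ to $\{\eta^{I_1'}z^{J'}\}$ becomes block upper-triangular with identity diagonal blocks, hence invertible; so $\{\bar y^{I_1}z^J\}=\{\overline{x^I}\}$ is also a $k$-basis of $F_*R/\m F_*R$. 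I expect the main technical point to be this final triangular replacement, which bridges the gap between canonical coefficient-field lifts $\eta_i\in K\subset\hat R$ and the arbitrary lifts $y_i\in R$ one actually has in hand.
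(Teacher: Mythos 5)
Your proof is correct. Both you and the paper choose the candidate family from a regular system of parameters together with lifts of a finite $p$-basis of $k$, and both pass to the completion $\hat R\cong K[[z_1,\ldots,z_d]]$ via Cohen's structure theorem; the difference lies in the verification step. The paper factors the inclusion $\hat R\subset F_*\hat R$ through the intermediate ring $k^{1/p}[[z_1,\ldots,z_d]]$ and reads off a basis at each of the two stages, which tacitly treats the chosen lifts $x_{n+1},\ldots,x_N$ as elements of the coefficient field $k\subset\hat R$; that identification is not automatic for lifts chosen in $R$. You instead use Kunz's rank formula together with Nakayama to reduce to spanning the finite-dimensional $k$-vector space $F_*(R/\m^{[p]})$, and you close the gap between arbitrary lifts $y_i$ and coefficient-field lifts $\eta_i\in K$ by an explicit block-triangular change of basis. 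Your route is a bit longer but handles arbitrary lifts honestly, whereas the paper's argument is quicker at the cost of eliding the coefficient-field subtlety.
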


\begin{proof}
Let $x_1,\ldots,x_n$ be a regular system of parameters of $R$.
Since $R$ is $F$-finite, so is $k$.
Thus, we can take a finite $p$-basis $x'_{n+1},\ldots,x'_N \in k$.
{\cred We take a lift $x_i \in R$ of $x_i' \in k$ for $n+1 \leq i \leq N$.}
Since $R$ is $F$-finite, we may assume that $R$ is a formal power series $k[[x_1,\ldots,x_n]]$ over $k$.
Then $\{x_{n+1}^{i_{n+1}/p} \cdots x_N^{i_N/p} \mid 0 \leq i_{n+1},\ldots,i_N \leq p-1 \}$ is a basis of $k^{1/p}[[x_1,\ldots,x_n]]$ over $k[[x_{1},\ldots,x_n]]$.
Furthermore, $\{x_{1}^{i_{1}/p} \cdots x_n^{i_n/p} \mid 0 \leq i_{1},\ldots,i_n \leq p-1 \}$ is a basis of $k^{1/p}[[x_1^{1/p},\ldots,x_n^{1/p}]]$ over $k^{1/p}[[x_1,\ldots,x_n]]$.
Therefore, the assertion holds.
\end{proof}

\begin{eg}
When $R=k[x_1,\ldots,x_N]$ and $k$ is a perfect field, Lemma \ref{lem:standard basis} asserts that
\[
\{F_* {\cred (}x_1^{i_1}\cdots x_N^{i_N} {\cred )} \mid 0 \leq i_1,\ldots,i_N \leq p-1 \}
\]
is a basis of $F_*R$ over $R$.
\end{eg}

\begin{conv}
\label{conv:basis}
Throughout this section, we fix an $F$-finite regular local ring $(R,\m,k)$  of characteristic $p>0$ with residue field $k$. 
{\cred We take $x_1, \ldots, x_N$ as in Lemma \ref{lem:standard basis}.
We denote the elements of 
\[
\{ x_1^{i_1}\cdots x_N^{i_N}  \mid 0 \leq i_1,\ldots,i_N \leq p-1 \}
\]
by $v_1, \ldots, v_d$, i.e.,\ $F_* v_1, \ldots, F_* v_d$ is a basis of $F_*R$ over $R.$
}
\end{conv}

\begin{defn}\label{definition:monomial}
\ 
\begin{enumerate}
    \item[\textup{(1)}] An element $a \in R$ is called \emph{$p$-monomial} if there exists $a' \in R$ and $i\in \{1,\ldots, d\}$ such that $a=a'^pv_i$.
    \item[\textup{(2)}] A decomposition $a = a_1 + \cdots + a_r$ of $a\in R$ in $R$ is called \emph{$p$-monomial decomposition} if $a_i\in R$ are all $p$-monomials.
\end{enumerate}
\end{defn}
    {\cred Note that the notion of $p$-monomial (decomposition) depends on the choice of a basis $F_*v_1, \ldots, F_*v_d$ in Convention \ref{conv:basis}.}

\begin{rmk}\,
\label{rmk:p-monomial}
\begin{enumerate}
    \item[\textup{(1)}] A product of $p$-monomials is a $p$-monomial. Indeed, we have
    \[
    a^pv_i \cdot b^pv_j=(ab)^pv_iv_j.
    \]
    Furthermore, by the choice of basis (see Lemma \ref{lem:standard basis}), $v_iv_j$ is also a $p$-monomial.
    \item[\textup{(2)}] All elements of $R$ have a $p$-monomial decomposition.
    In fact, for $a \in R$, the basis expansion 
{\cred $F_*a=\sum a_i F_*v_i$ gives a $p$-monomial decomposition $a = \sum a_i^p v_i$.}
    \item[\textup{(3)}] We note that $p$-monomial decompositions are not unique. 
    Indeed, $0=v_i-v_i=v_i+(-1)^pv_i$ is a $p$-monomial decomposition.
    {\cred
    \item[\textup{(4)}]
    Since $v_i =1$ for some $i$, $a^p$ is a $p$-monomial for any $a \in R$.
    }
\end{enumerate}
\end{rmk}

\begin{lem}\label{lem:well-definedness for delta}
\ 
\begin{itemize}
    \item[\textup{(1)}] Let $0=\sum a_i$ be a $p$-monomial decomposition of $0$. Then $\sum[a_i]=0$ in $\var{W}(R)$.
    \item[\textup{(2)}] Let $\alpha \in \var{W}(R)$.
    Then there uniquely exists $\alpha' \in \var{W}(R)$ such that $\alpha-\alpha'$ is a sum of Teichm\"uller lifts of $p$-monomials and $\alpha'$ is contained in the image of $V$.
\end{itemize}
\end{lem}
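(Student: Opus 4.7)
The plan is to derive (i) by a direct computation in $W(R)$ that exploits both the $p$-monomial structure and the fact that $R$ is a domain, and to deduce (ii) by an existence-uniqueness argument whose uniqueness step reduces to (i). For (i), I write each $p$-monomial in the given decomposition as $a_i = b_i^p v_{j_i}$. The identity $F[c]=[c^p]$ in characteristic $p$ combined with the multiplicativity of the Teichm\"uller lift yields $[a_i]=F[b_i]\cdot[v_{j_i}]$, and regrouping by the basis index gives
\[
\sum_i [a_i] \;=\; \sum_j F(\beta_j)\cdot [v_j],\qquad \beta_j := \sum_{j_i=j}[b_i]\in W(R).
\]
The relation $\sum_i b_i^p v_{j_i}=0$ together with the basis property of $\{v_j\}$ forces $\sum_{j_i=j}b_i^p=0$ in $R$ for each $j$; the freshman's dream and the fact that a regular local ring is a domain then give $\sum_{j_i=j}b_i=0$. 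Thus the zeroth component of each $\beta_j$ vanishes, so $\beta_j\in VW(R)$. Writing $\beta_j=V\delta_j$ produces $F\beta_j=FV\delta_j=p\delta_j$, and consequently $\sum_i[a_i]=p\sum_j \delta_j[v_j]\in pW(R)$, which is zero in $\var W(R)$.

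For (ii), existence is immediate: expanding $F_*\alpha_0$ in the basis $\{v_j\}$ yields a $p$-monomial decomposition $\alpha_0 = \sum r_i^p v_i$ of the zeroth component of $\alpha$, and then $\alpha' := \alpha - \sum [r_i^p v_i]$ has zero zeroth component, hence lies in $V\var W(R)$. For uniqueness, I first observe that $-[a]=[-a]$ holds in $\var W(R)$ for every $a\in R$: when $p$ is odd this follows from $[a]+[-a]=0$ in $W(R)$, itself a consequence of the vanishing of the ghost components of $[X]+[-X]$ in $W(\Z[X])$ together with ghost injectivity over the $p$-torsion-free ring $\Z[X]$; when $p=2$ we have $-a=a$ in $R$, and $2[a]=p[a]$ vanishes in $\var W(R)$, so $-[a]=[a]=[-a]$. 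Since $-a$ is itself a $p$-monomial whenever $a$ is, the subset $T\subset \var W(R)$ of finite sums of Teichm\"uller lifts of $p$-monomials is closed under negation, hence is a subgroup.

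Uniqueness then follows: if $\alpha-\alpha'$ and $\alpha-\alpha''$ both lie in $T$ with $\alpha',\alpha''\in V\var W(R)$, then $\alpha'-\alpha''\in T\cap V\var W(R)$; but any element $\sum[a_i]\in T$ that lies in $V\var W(R)$ has zeroth component $\sum a_i=0$, and (i) forces it to vanish in $\var W(R)$. Hence $\alpha'=\alpha''$. The main technical hurdle is the computation in (i), whose crucial leverage is the reducedness of $R$ enabling the passage from $\sum b_i^p=0$ to $\sum b_i=0$ via Frobenius injectivity on $R$; by contrast, the identity $-[a]=[-a]$ in $\var W(R)$ is a routine ghost-coordinate calculation and is unlikely to cause trouble.
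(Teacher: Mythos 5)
Your proof is correct and follows essentially the same route as the paper's: for (i), regroup by basis index, use injectivity of Frobenius on the domain $R$ to pass from $\sum b_i^p=0$ to $\sum b_i=0$, and exploit $FV=p=0$ in $\overline W(R)$; for (ii), existence via the basis expansion of the zeroth component and uniqueness by reduction to (i). The one place you are more careful than the paper is in spelling out that the set of sums of Teichm\"uller lifts of $p$-monomials is closed under negation (via $-[a]=[-a]$ and $-a$ being a $p$-monomial), a point the paper uses silently when concluding that $\alpha_1'-\alpha_2'$ is again such a sum.
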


\begin{proof}
First, we prove (1).
Since every $a_i$ is a $p$-monomial, we can write as $a_i={a'}_i^pv_{r_i}$.
Since $\sum a_i=0$, the coefficients of $v_i$ {\cred are} zero for all $i$. Thus, we may assume that $v:=v_{r_i}=v_{r_j}$ for all $i$ {\cred and $j$}, and we have $\sum {a'}_i=0$.
By the $R$-module structure of $F_*\var{W}(R) \cong Q_{R,n}$, we have
{\cred
\[
F_* \left(\sum [{a'}_i^p v]\right) =\sum F_* ([{a'}^p_i][v])=\sum (a_i'F_*[v]) =(\sum {a'}_i) F_\ast[v]=0,
\]
}
as desired.

Next, we prove (2). 
The zero-th component of $\alpha$ is denoted by $a \in R$.
We take a $p$-monomial decomposition $a=\sum a_i$.
We define $\alpha' \in \var{W}(R)$ by 
\[
\alpha=\sum [a_i]+\alpha'.
\]
Since the zero-th component of $\sum [a_i]$ is $a$, it follows that $\alpha'\in \mathrm{Im}(V)$.
We take $\alpha'_1, \alpha'_2 \in \var{W}(R)\cap\mathrm{Im}(V)$ 
such that $\alpha-\alpha'_i$ is a sum of Teichm\"uller lifts of $p$-monomials for $i\in\{1,2\}$.
Then $\alpha'_1-\alpha'_2$ is a sum of Teichm\"uller lifts of $p$-monomials.
If $\alpha'_1-\alpha'_2=\sum [a_i]$ for $p$-monomials $a_i$, then we have $\sum a_i=0$.
By (1), we have $\sum [a_i]=0$, and thus $\alpha_1'=\alpha_2'$.
Thus, we obtain (2).
\end{proof}

\begin{defn}\label{definition:delta map}
The cokernel of the {\cred additive} map
\[
R \to \var{W}(R)\ ;\ a \mapsto [a^p]
\]
is denoted by $\var{W}(R)/[F(R)]$.
We define the map
\[
\Delta_{W} \colon \var{W}(R) \to \var{W}(R)/[F(R)]
\]
as follows:

Let $\alpha \in \var{W}(R)$.
By Lemma \ref{lem:well-definedness for delta}, there uniquely exists $\alpha' \in \var{W}(R)$ such that $\alpha-\alpha'$ is a sum of Teichm\"uller lifts of $p$-monomials and $\alpha'\in \mathrm{Im}V$.
Then we can define the map
\[
\var{W}(R) \to \mathrm{Im}(V)\ ;\ \alpha \mapsto \alpha'.
\]
By the construction of $\var{W}(R)$ and the map {\cred $V:\var{W}(R) \to \var{W}(R)$}, we have $\var{W}(R)/[F(R)] \cong \mathrm{Im}(V)$.
{\cred We also denote the induced injective additive map 
\begin{equation}
\label{eqn:VfromW/F}
\var{W} (R)/[F(R)] \rightarrow \var{W} (R)
\end{equation}
by $V$.}
Thus, we obtain a map
\[
\Delta_W \colon \var{W}(R) \to \var{W}(R)/[F(R)].
\]
Equivalently, $\Delta_W(\alpha) \in \var{W}(R)/[F(R)]$ is the unique element so that $\alpha-V \Delta_W(\alpha)$ is the sum of Teichm\"uller lifts of $p$-monomials.

Since $a^p$ is a $p$-monomial {\cred for $a\in R$}, it follows that $\alpha+[a^p]-V\Delta_W(\alpha)$ is a sum of Teichm\"uller lifts of $p$-monomials.
Therefore, we have $\Delta_W(\alpha+[a^p])=\Delta_W(\alpha)$, and $\Delta_W$ induces a map
\[
\var{\Delta}_W \colon \var{W}(R)/[F(R)] \to \var{W}(R)/[F(R)].
\]

Next, we define a map $\Delta$ by
\[
\Delta \colon R \to \var{W}(R)/[F(R)]\ ;\ a \mapsto \Delta_W([a]).
\]

Finally, we define a map 
\[
\Delta_r \colon R \to R/F(R)
\]
by  
\[
\Delta_{r}(a) = ({\var{\Delta}}^r_W([a]))_{0},
\]
where ${\var{\Delta}}^r_W$ denotes the $r$-times composition of $\var{\Delta}_W$ and $({\var{\Delta}}^r_W([a]))_{0}$ denotes the zero-th component of ${\var{\Delta}}^r_W([a])$.
\end{defn}

\begin{prop}\label{prop:rule for delta map}
The following hold.
\begin{itemize}
    \item[\textup{(1)}] $\Delta_W(V\alpha)=\alpha$, $\Delta(a)=0$, {\cred and $\Delta_r (a) =0$} for $\alpha \in \var{W}(R)$ and a $p$-monomial $a \in R$,
    \item[\textup{(2)}] $\Delta_W(\alpha+\beta)=\Delta_W(\alpha)+\Delta_W(\beta)$ and $\Delta_W(\alpha \beta)={\cred (} F\alpha {\cred )} \Delta_W(\beta)+ {\cred (} F\beta {\cred )} \Delta_W(\alpha)$ for $\alpha, \beta \in \var{W}(R)$,
    \item[\textup{(3)}] $\Delta(ab)=[a^p]\Delta(b)+[b^p]\Delta(a)$ for $a,b \in R$, and
    \item[\textup{(4)}] $\Delta_r(ab)=a^{p^r}\Delta_{\cred r}(b)+b^{p^r}\Delta_{\cred r}(a)$ for $a,b \in R$ and $r \geq 1$.
\end{itemize}
\end{prop}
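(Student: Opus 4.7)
The plan is to verify the four rules in order, with the real work concentrated in part (ii); parts (iii) and (iv) will follow formally.

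For (i), both assertions are immediate from the uniqueness statement in Lemma \ref{lem:well-definedness for delta}. The zeroth component of $V\alpha$ is $0$, which has the (trivial) empty $p$-monomial decomposition, so the equation $V\alpha = 0 + V\alpha$ exhibits $\alpha$ as the unique element with $V\alpha - V\alpha$ a sum of Teichmüller lifts of $p$-monomials; hence $\Delta_W(V\alpha) = \alpha$. Likewise, if $a$ is a $p$-monomial then $[a] = [a] + V\cdot 0$ already has the required form, forcing $\Delta_W([a]) = 0$, i.e.\ $\Delta(a) = 0$.

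For the additivity part of (ii), I concatenate $p$-monomial decompositions: if $a=\sum a_i$ and $b=\sum b_j$ are such decompositions of the zeroth components of $\alpha,\beta$, then $a+b = \sum a_i + \sum b_j$ is one for $\alpha+\beta$, and summing the defining equations for $\Delta_W(\alpha),\Delta_W(\beta)$ together with uniqueness yields $\Delta_W(\alpha+\beta) = \Delta_W(\alpha)+\Delta_W(\beta)$. For the Leibniz-type rule, I expand
\[
\alpha\beta = \Bigl(\sum[a_i] + V\Delta_W(\alpha)\Bigr)\Bigl(\sum[b_j] + V\Delta_W(\beta)\Bigr)
\]
and treat the four classes of cross-terms using the standard Witt identities $xV(y) = V(F(x)y)$, $V(x)V(y) = pV(xy)$, and $F[c] = [c^p]$. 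The terms $[a_i][b_j] = [a_ib_j]$ are Teichmüller lifts of $p$-monomials (products of $p$-monomials are $p$-monomials); the $VV$-term vanishes in $\var{W}(R)$ since $p=0$; and each mixed term rearranges as, for example, $\sum_j[b_j]V\Delta_W(\alpha) = V\bigl((\sum_j F[b_j])\Delta_W(\alpha)\bigr) = V(F\beta\cdot\Delta_W(\alpha))$, using $\sum_j F[b_j] = F(\beta - V\Delta_W(\beta)) = F\beta$ modulo $p$. Uniqueness then gives $\Delta_W(\alpha\beta) = F\alpha\,\Delta_W(\beta) + F\beta\,\Delta_W(\alpha)$.

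Part (iii) is (ii) applied to $\alpha = [a]$, $\beta = [b]$, using $[a][b] = [ab]$ and $F[a] = [a^p]$. For (iv), I first check that $\Delta_W$ descends to $\var{\Delta}_W$ on $\var{W}(R)/[F(R)]$ and, by combining (i) with the Leibniz rule from (ii), satisfies $\var{\Delta}_W([c^p]\gamma) = [c^{p^2}]\,\var{\Delta}_W(\gamma)$. Iterating this identity together with additivity gives, by induction on $r$,
\[
\var{\Delta}_W^{\,r}([ab]) = [a^{p^r}]\,\var{\Delta}_W^{\,r}([b]) + [b^{p^r}]\,\var{\Delta}_W^{\,r}([a]),
\]
and extracting zeroth components yields the desired formula. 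The main obstacle will be the Leibniz expansion in (ii): one must carefully apply the Witt-vector identities and keep track of which terms collapse modulo $p$, so that the sum of Teichmüller lifts of $p$-monomials cleanly separates from the $V$-image piece. Once this is settled, the remaining parts are essentially formal bookkeeping.
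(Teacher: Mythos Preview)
Your proposal is correct and follows essentially the same approach as the paper's proof. The only cosmetic difference is in (ii): the paper expands the product $(\alpha - V\Delta_W(\alpha))(\beta - V\Delta_W(\beta))$ directly (which is already a sum of Teichm\"uller lifts of $p$-monomials) and reads off the $V$-part, whereas you substitute and expand $\alpha\beta$ term by term; the Witt identities invoked and the appeal to uniqueness are identical.
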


\begin{proof}
First, (1) follows from the definition of $\Delta_W,$ {\cred $\Delta,$ and $\Delta_r$}.

Next, we prove (2).
The definition of $\Delta_W$ shows that $\alpha-V\Delta_W(\alpha)$ and $\beta-V\Delta_W(\beta)$ are both sums of Teichm\"uller lifts of $p$-monomials.
Thus, so is
\[
\alpha-V\Delta_W(\alpha)+\beta-V\Delta_W(\beta)\ 
\cred{ = (\alpha + \beta) - V(\Delta_{W} (\alpha) + \Delta_{W}(\beta))}.
\]
By the construction of $\Delta_W$, we have
\[
\Delta_W(\alpha+\beta)=\Delta_W(\alpha)+\Delta_W(\beta).
\]
Furthermore, since the product of $p$-monomials is also a $p$-monomial, 
\[
(\alpha-V\Delta_W(\alpha))(\beta-V\Delta_W(\beta))=\alpha \beta -
V({\cred (} F\alpha {\cred )} \Delta_W(\beta)+ {\cred (}F\beta {\cred )} \Delta_W(\alpha))+V \Delta_W(\alpha) V\Delta_W(\beta)
\]
is a sum of the Teichm\"uller lifts of $p$-monomials.
Since we have
\[
V\Delta_W(\alpha) V\Delta_W(\beta)=V(\Delta_W(\alpha) FV \Delta_W(\beta))=pV(\Delta_W(\alpha) \Delta_W(\beta))=0
\]
in $\var{W}(R)/[F(R)]$, we have
\[
\Delta_W(\alpha \beta)= {\cred (} F\alpha {\cred )} \Delta_W(\beta)+ {\cred (}F\beta {\cred )}\Delta_W(\alpha).
\]
Therefore, we obtain (2).

Thirdly, we prove the assertion (3).
We take $a,b \in R$.
We recall that $\Delta(a)=\Delta_W([a])$ and $\Delta(b)=\Delta_W([b])$.
Then, (2) shows that
\[
\Delta(ab)=\Delta_W([ab])=[a^p]\Delta_W([b])+[b^p]\Delta_W([a])=[a^p]\Delta(b)+[b^p]\Delta(a),
\]
and we obtain (3).

Finally, we prove (4).
Let $a,b \in R$.
First, we compute $\var{\Delta}_W^r([ab])$.
We recall that the zero-th component of $\var{\Delta}_W^r([ab])$ is $\Delta_r(ab)$.
By (3), we have
\[
\var{\Delta}_W([ab])=[a^p]\var{\Delta}_W([b])+[b^p]\var{\Delta}_W([a]),
\]
and by (2), we have
\[
\var{\Delta}_W^2([ab])=\var{\Delta}_W([a^p]\var{\Delta}_W([b]))+\var{\Delta}_W([b^p]\var{\Delta}_W([a])).
\]
By (1) and (2), we have
\[
\var{\Delta}_W^2([ab])=[a^{p^2}]\var{\Delta}_W^2([b])+[b^{p^2}]\var{\Delta}_W^2([a]),
\]
since ${\cred a^p}$ and ${\cred b^p}$ are $p$-monomials {\cred by Remark \ref{rmk:p-monomial}}.
Repeating these arguments, we obtain
\[
\var{\Delta}_W^r([ab])=[a^{p^r}]\var{\Delta}_W^r([b])+[a^{p^r}]\var{\Delta}_W^r([a]).
\]
Comparing the zero-th components, we have
\[
\Delta_r(ab)=a^{p^r}\Delta_r({\cred b})+b^{p^r}\Delta_r({\cred a}),
\]
as desired.
\end{proof}

\begin{rmk}\label{eg:explicit delta}
Let $f \in R$, and let $f = f_1 + \cdots + f_m$ be a $p$-monomial decomposition.
{\cred Since $[f] - \sum [f_i] \in \mathrm{Im}(V)$, there exist elements $\delta_1(f), \delta_2(f), \ldots \in R$ such that
\[
[f] = \sum [f_i] + (0, \delta_1(f), \delta_2(f), \ldots).
\]
Here, note that the tuple $(\delta_1(f), \delta_2(f), \ldots) $  depends on the choice of $p$-monomial decomposition of $f\in R$.
It follows that
\[
\Delta(f) = (\delta_1(f), \delta_2(f), \ldots)
\]
in $\var{W}(R) / [F(R)]$, and in particular, we have $\Delta_1(f) = \delta_1(f)$.
Next, we compute
\[
{\var\Delta}_W^2([f]) = {\var\Delta}_W((\delta_1(f), \delta_2(f), \ldots)) = {\var\Delta}_W([\delta_1(f)]) + (\delta_2(f), \delta_3(f), \ldots).
\]
By iterating this process, we obtain
\[
{\var\Delta}_W^s([f]) = {\var\Delta}_W^{s-1}([\delta_1(f)]) + {\var\Delta}_W^{s-2}([\delta_2(f)]) + \cdots + {\var\Delta}_W([\delta_{s-1}(f)]) + (\delta_s(f), \delta_{s+1}(f), \ldots).
\]
Therefore, we have
\[
\Delta_s(f) = \sum_{r=1}^{s-1} \Delta_r \circ \delta_{s-r}(f) + \delta_s(f),
\]
as in (\ref{eqn:Deltadelta}).
}
By the additive structure of the ring of Witt vectors, we have
\[
\delta_1(f) = \sum_{\substack{0 \leq \alpha_1, \ldots, \alpha_m \leq p-1 \\ \alpha_1 + \cdots + \alpha_m = p}} \frac{1}{p} \binom{p}{\alpha_1, \ldots ,\alpha_m} f_1^{\alpha_1} \cdots f_m^{\alpha_m}.
\]
\end{rmk}

\begin{prop}\label{prop:delta gives a splitting}
We define a{\cred n} {\cred additive} map $\widetilde{\sigma}_n \colon \var{W}(R) \to R/F(R)$ by 
\[
\xymatrix{
\var{W}(R) \ar[r] & \var{W}(R)/[F(R)] \ar[r]^-{\var{\Delta}_{W}^{n-1}} & \var{W}(R)/[F(R)] \ar[r] & R/F(R),
}
\]
where the first map is the natural surjection and the last map is the map induced by the restriction map.
Then $\widetilde{\sigma}_n$ induces an $R$-module homomorphism
\[
\sigma_n \colon Q_{R,n}\to F^{n-1}_*(F_*R/R)
\]
which satisfies
\begin{align}\label{equation:description of sigma}
     \sigma_n({\cred F_*} (a_0,a_1,\ldots,a_{n-1}))= {\cred F^n_*(}\Delta_{n-1}(a_0)+\Delta_{n-2}(a_1)+\cdots+a_{n-1} {\cred)}
\end{align}
{\cred for $a_0, \ldots, a_{n-1} \in R$.}
In particular, $\sigma_n$ gives a splitting of the {\cred injective} $R$-module homomorphism
\[
V^{n-1} \colon F^{n-1}_*(F_*R/R) \to Q_{R,n} \ ;\ {\cred F^n_*} a \mapsto {\cred F_* (} V^{n-1}[a] {\cred )}.
\]
\end{prop}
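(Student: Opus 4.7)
The plan is to verify in turn that $\widetilde{\sigma}_n$ is additive, is $R$-linear with respect to the twisted action $a\cdot\alpha=[a^p]\alpha$ on $Q_{R,n}$, vanishes on $V^n\var{W}(R)$ so that it descends to $\var{W}_n(R)\cong Q_{R,n}$, and satisfies \eqref{equation:description of sigma}; the splitting property will then follow immediately from the formula.

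Additivity is inherited from Proposition \ref{prop:rule for delta map}(ii). For $R$-linearity, identify the underlying abelian group of $F^{n-1}_*(F_*R/R)$ with $R/F(R)$; under this identification the $R$-action on the target becomes $a\cdot c = a^{p^n} c$, so what must be shown is
\[
\widetilde{\sigma}_n([a^p]\alpha) = a^{p^n}\widetilde{\sigma}_n(\alpha) \quad\text{in } R/F(R).
\]
The crucial observation is that $a^p$ is a $p$-monomial, hence $\Delta_W([a^p]) = 0$ by Proposition \ref{prop:rule for delta map}(i); combined with the Leibniz rule of Proposition \ref{prop:rule for delta map}(ii) this yields $\var{\Delta}_W([a^p]\alpha) = [a^{p^2}]\var{\Delta}_W(\alpha)$ in $\var{W}(R)/[F(R)]$. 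Iterating (each intermediate $[a^{p^k}]$ is again a $p$-monomial, hence annihilated by $\var{\Delta}_W$) one obtains
\[
\var{\Delta}_W^{n-1}([a^p]\alpha) = [a^{p^n}]\var{\Delta}_W^{n-1}(\alpha),
\]
and extracting the $0$-th component gives the required linearity.

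For the descent, iterating $\var{\Delta}_W(V\beta) = \beta$ from Proposition \ref{prop:rule for delta map}(i) gives $\var{\Delta}_W^{n-1}(V^n\alpha) = V\alpha$, whose $0$-th component vanishes. For the explicit formula, a ghost-component comparison shows $(a_0,\ldots,a_{n-1},0,\ldots) = \sum_{k=0}^{n-1} V^k[a_k]$ in $W(R)$; by additivity together with $\var{\Delta}_W^{n-1}(V^k[a_k]) = \var{\Delta}_W^{n-1-k}([a_k])$, each summand contributes $\Delta_{n-1-k}(a_k)$ to $\widetilde{\sigma}_n$ by the definition of $\Delta_r$ (with the convention $\Delta_0 = \mathrm{id}$), yielding \eqref{equation:description of sigma}. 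The splitting $\sigma_n \circ V^{n-1} = \mathrm{id}$ is then the special case $V^{n-1}[a] = (0,\ldots,0,a)$: since $\Delta_r(0) = 0$ for $r\geq 1$, only the last summand survives, giving $a$.

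The only delicate step is the $R$-linearity calculation; once one recognizes that Teichm\"uller lifts of $p$-th powers are killed by $\Delta_W$, the Leibniz rule reduces the computation to a one-step induction. Every other verification is a direct unwinding of definitions combined with Proposition \ref{prop:rule for delta map}.
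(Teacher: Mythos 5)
Your proof is correct and follows essentially the same route as the paper's: descent via $\var{\Delta}_W(V\beta)=\beta$, $R$-linearity from the Leibniz rule together with the observation that Teichm\"uller lifts of $p$-th powers are $\Delta_W$-killed, and the explicit formula from the decomposition $\alpha=\sum_k V^k[a_k]$ and $\var{\Delta}_W^{n-1}(V^k[a_k])=\var{\Delta}_W^{n-1-k}([a_k])$. The only cosmetic difference is that you re-derive the identity $\var{\Delta}_W^{n-1}([a^p]\alpha)=[a^{p^n}]\var{\Delta}_W^{n-1}(\alpha)$ inline, whereas the paper invokes Proposition~\ref{prop:rule for delta map} (the computation in question already appears in the proof of its part (iv)).
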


\begin{proof}
Let $a\in R$ and $m\in\Z_{>0}$ such that $m \geq n$.
Since $V^{m}[a] \in \mathrm{Im}(V^{n})$, Proposition \ref{prop:rule for delta map} (1) shows that $\var{\Delta}_{W}^{n-1}(V^{m}[a])=V^{m-(n-1)}[a] \in \mathrm{Im}(V)$.
Thus, we have $\widetilde{\sigma}_n(V^{m}[a])=0$, and $\widetilde{\sigma}_n$ induces a{\cred n} {\cred additive} map
\[
\sigma_n \colon \var{W}_n(R) \to R/F(R).
\]
By Proposition \ref{prop:rule for delta map} {\cred (1) and (2)} {\cred and Remark \ref{rmk:p-monomial} (4)},
we have
\[
\var{\Delta}_{W}^{n-1}([a^p] \alpha)=a^{p^{n}}\var{\Delta}_{W}^{n-1}(\alpha)
\]
and $\var{\Delta}_{W}^{n-1}(\alpha+\beta)=\var{\Delta}_{W}^{n-1}(\alpha)+\var{\Delta}_{W}^{n-1}(\beta)$
{\cred for $a\in R$ and $\alpha, \beta \in \overline{W}(R)$}.
Thus, $\sigma_n \colon Q_{R,n} \to F^{n-1}_*(F_*R/R)$ is an $R$-module homomorphism.
Since we have
\begin{align}\label{equation:sigma_n}
\var{\Delta}_{W}^{n-1}(V^{s-1}[a])=\var{\Delta}_{W}^{n-s}([a])    
\end{align}
by Proposition \ref{prop:rule for delta map} for $s \leq n$.
The zero-th component of $\var{\Delta}_{W}^{n-s}([a])$ is $\Delta_{n-s}(a)$ if $s \leq n-1$, and $a$ if $s=n$.
Comparing the zero-th components of (\ref{equation:sigma_n}), we have $\sigma_{n}(V^{s-1}[a])=\Delta_{n-s}(a)$ for $s \leq n-1$ and $\sigma_{n}(V^{n-1}[a])=a$.
Therefore, we obtain (\ref{equation:description of sigma}).
\end{proof}

\subsection{Delta formula}
In the previous subsection, we construct a splitting $\sigma_n$ of $V^{n-1}$.
By the description (\ref{equation:description of sigma}), an explicit description of $\Delta_n$ gives that of $\sigma_n$.
In this subsection, we prove the delta formula (Theorem \ref{thm:delta formula}), which means that all $\Delta_n$ can be computed by only $\Delta_1$.
It is an essential reason why we only need the ring structure of $W_2(R)$ in a Fedder type criterion for quasi-$F$-splitting (Theorem \ref{Intro:thm:Fedder's criterion}).

For the proof of the delta formula, we need the map ${\cred w}$ in Remark \ref{rmk:ghost component}, and ${\cred w}$ behaves well only in mixed characteristic.
Therefore, we first define $\Delta_n$ in mixed characteristic using a lift of the Frobenius map.

\begin{conv}
Throughout this subsection, we set
\[
A:=\Z[X_{f} \mid f \colon \text{$p$-monomial in } R\ ]. 
\]
We define a lift of Frobenius $\phi$ as the ring homomorphism defined by $\phi(X_f)=X_f^p$.
We recall the 
{\cred ring homomorphism}
\[
{\cred w} \colon W(A) \to \prod_{\Z_{ \geq 0}} A
\]
in Remark \ref{rmk:ghost component}.
The $r$-th component of ${\cred w}((a_0,a_1,a_2,\ldots ))$ is
\[
a_0^{p^{r}}+pa_1^{p^{r-1}}+ \cdots p^{r-1}a_{r-1}^p+p^{r}a_{r}.
\]
In addition, ${\cred w}$ is injective since $A$ is $p$-torsion free.
\end{conv}

\begin{defn}
For $x \in A$, we say that $x$ is a \emph{$\phi$-monomial} if $\phi(x)=x^p$.
We say that $x=\sum m_i x_i$ is a \emph{$\phi$-monomial decomposition} if $x_i$ is a $\phi$-monomial and $m_i \in \Z$ for all $i$.
\\
For $\alpha \in W(A)$, we say that $\alpha=\sum m_i \alpha_i$ is a \emph{$\phi$-monomial decomposition} if $\alpha_i$ is the Teichm\"uller lift of a $\phi$-monomial and $m_i \in \Z$ for all $i$.
\end{defn}

\begin{rmk}
An integer $m\in\Z$ is not a $\phi$-monomial in general.
{\cred Indeed, an integer $m$ is $\phi$-monomial if and only if $m^p = m$ (that is, $m=0, m=1,$ or $p \neq 2$ and $m=-1$).}
\end{rmk}

\begin{lem}\label{lem:well-definedness for delta map in mixed characteristic}
\ 
\begin{itemize}
    \item[\textup{(1)}] Let $0=\sum m_ix_i$ be a $\phi$-monomial decomposition of zero{\cred , where $m_i \in \Z$ and $x_i \in A$}. Then $\sum m_i [x_i]=0$.
    \item[\textup{(2)}] Every element of $A$ has a $\phi$-monomial decomposition.
    \item[\textup{(3)}] For $\alpha \in W(A)$, there uniquely exists $\alpha' \in \mathrm{Im}(V)$ such that $\alpha-\alpha'$ has a $\phi$-monomial decomposition.
\end{itemize}
\end{lem}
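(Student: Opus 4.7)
The plan is to prove the three parts in order, using the injectivity of the ghost component map as the main technical tool.

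For part (i), the key observation is that $A = \mathbb{Z}[X_f]$ is $p$-torsion free, so by Remark \ref{rmk:ghost component} the ghost component map $\varphi \colon W(A) \to \prod_{\mathbb{Z}_{\geq 0}} A$ is injective. Thus it suffices to verify that $\varphi(\sum m_i[x_i]) = 0$, i.e., that each ghost coordinate $\sum m_i x_i^{p^r}$ vanishes. I would start from $\sum m_i x_i = 0$ and apply $\phi^r$, which is a ring endomorphism of $A$. Since each $x_i$ is a $\phi$-monomial (meaning $\phi(x_i) = x_i^p$, hence $\phi^r(x_i) = x_i^{p^r}$), this yields $\sum m_i x_i^{p^r} = 0$ for every $r \geq 0$, which is exactly the vanishing of the ghost components.

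For part (ii), observe that $A$ is a polynomial ring over $\mathbb{Z}$ in the variables $X_f$, so every element of $A$ is a $\mathbb{Z}$-linear combination of monomials in the $X_f$. Because $\phi$ is defined by $\phi(X_f) = X_f^p$, any such monomial $m = \prod X_{f_i}^{e_i}$ satisfies $\phi(m) = m^p$, so $m$ is a $\phi$-monomial. Hence the standard monomial expansion of any $a \in A$ is a $\phi$-monomial decomposition.

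For part (iii), existence is easy: given $\alpha = (a_0, a_1, \ldots) \in W(A)$, use (ii) to write $a_0 = \sum m_i x_i$ as a $\phi$-monomial decomposition, and set $\alpha' := \alpha - \sum m_i [x_i]$. The Teichm\"uller lift $[x_i]$ has $0$-th component $x_i$, so $\alpha'$ has $0$-th component zero, hence $\alpha' \in \mathrm{Im}(V)$, while $\alpha - \alpha' = \sum m_i[x_i]$ is by construction a $\phi$-monomial decomposition. For uniqueness, suppose $\alpha'_1, \alpha'_2 \in \mathrm{Im}(V)$ both work. Then the difference $\alpha'_1 - \alpha'_2$ lies in $\mathrm{Im}(V)$ and admits a $\phi$-monomial decomposition $\sum m_i [x_i]$ (by combining the two decompositions of $\alpha - \alpha'_1$ and $\alpha - \alpha'_2$ with appropriate signs). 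The $0$-th component of $\mathrm{Im}(V)$ is zero, so $\sum m_i x_i = 0$, and then part (i) forces $\sum m_i [x_i] = 0$, giving $\alpha'_1 = \alpha'_2$.

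The only place where any real work is hidden is part (i); the rest is bookkeeping. The crucial conceptual point, and the reason this lemma is being proved in characteristic zero (or rather over $\mathbb{Z}$), is that one needs the injectivity of $\varphi$, which fails in characteristic $p$. In the mixed-characteristic setting, being a $\phi$-monomial ensures that ghost components behave like mere powers, which converts the ring-theoretic relation $\sum m_i x_i = 0$ into simultaneous vanishing of all ghost components of $\sum m_i[x_i]$; this is the step I would single out as the linchpin of the argument.
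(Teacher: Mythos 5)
Your proof is correct and follows the same route as the paper: part (i) via injectivity of the ghost component map $\varphi$ (using $p$-torsion freeness of $A$) combined with the fact that $\phi^r$ is a ring endomorphism sending each $\phi$-monomial $x_i$ to $x_i^{p^r}$; part (ii) by noting that monomials in the $X_f$ are $\phi$-monomials; and part (iii) by the same $0$-th-component bookkeeping the paper borrows from Lemma \ref{lem:well-definedness for delta}.
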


\begin{proof}
We prove (1).
Since ${\cred w}$ is a ring homomorphism, we have
\begin{align*}
    {\cred w}(\sum m_i [x_i])=& \sum m_i {\cred w}({\cred[}x_i {\cred]}) = \sum m_i (x_i,x_i^p,x_i^{p^2},\ldots) 
    = (\sum m_i x_i, \sum m_i x_i^p,\sum m_i x_i^{p^2},\ldots).
\end{align*}
Since $x_i^{p^r}=\phi^r(x_i)$ and $\phi$ is a ring homomorphism, we obtain
\[
\sum m_i x_i^{p^r}=\sum m_i \phi^r(x_i) =\phi^r(\sum m_i x_i).
\]
Then we have $\sum m_i x_i^{p^r}=0$ since $\sum m_i x_i=0$.
We conclude that $\sum m_i [x_i]=0$ by the injectivity of ${\cred w}$.

(2) follows from the fact that every monomial is a $\phi$-monomial.

Finally, using the argument in Lemma \ref{lem:well-definedness for delta}, (3) follows from (1) and (2). 
\end{proof}

\begin{defn}
We define the map
\[
\Delta_{W} \colon W(A) \to W(A)
\]
as follows:

Let $\alpha \in W(A)$.
By Lemma \ref{lem:well-definedness for delta map in mixed characteristic}, there uniquely exists $\alpha' \in W(A)$ such that $\alpha-\alpha'$ has a $\phi$-monomial decomposition and $\alpha'\in \mathrm{Im}(V)$.
Since $V$ is injective, there uniquely exists an element of $W(A)$ whose image by $V$ is $\alpha'$.
Thus, we can define $\Delta_W(\alpha)$ {\cred $\in W(A)$} satisfying $V  \Delta_W(\alpha)=\alpha'$.
Equivalently, $\Delta_W(\alpha)$ is the unique element {\cred of $W(A)$} satisfying that $\alpha-V \Delta_W(\alpha)$ has a $\phi$-monomial decomposition.

Next, we define a map $\Delta$ by
\[
\Delta \colon A \to W(A)\ ;\ a \mapsto \Delta_W([a]),
\]
and $\Delta_r \colon A \to A$ by the zero-th component of $\Delta_W^r([a])$.
Furthermore, for $a \in A$, we define $\delta_r(a) \in A$ satisfying
\[
\Delta(a)=(\delta_1(a),\delta_2(a),\ldots )
\]
\end{defn}

\begin{lem}\label{lem:inductive rule for delta}
\begin{enumerate}
\item
{\cred $\Delta_{W} \colon W(A) \rightarrow W(A)$ is an additive map and $\Delta_W \circ V=\mathrm{id}_{W(A)}$.}
\item
For all $n \geq 1$, we have
\[
\Delta_n=\sum^{n-1}_{r=1} \Delta_r \circ \delta_{n-r} +\delta_n
\]
\end{enumerate}
\end{lem}

\begin{proof}
{\cred First, we prove (1).
Take $\alpha, \beta \in W(A)$. Then $\alpha - V \Delta_W(\alpha)$ and $\beta - V \Delta_W(\beta)$ have $\phi$-monomial decompositions by definition.
Therefore, the element $(\alpha + \beta) - V(\Delta_W(\alpha) + \Delta_W(\beta))$ also has a $\phi$-monomial decomposition. In particular, we have $\Delta_W(\alpha + \beta) = \Delta_W(\alpha) + \Delta_W(\beta)$, as claimed.
Furthermore, for $\alpha \in W(A)$, since the element $V\alpha - V\alpha = 0$ admits a $\phi$-monomial decomposition, we obtain $\Delta_W(V\alpha) = \alpha$, as claimed.
}

{\cred Next, we prove (2).}
Let $a \in A$.
Then $\Delta_{W}([a])=\Delta(a)=(\delta_1(a),\delta_2(a), \ldots)$.
{\cred Furthermore},  we have
\begin{eqnarray*}
\Delta_{W}^n([a])&=&\Delta_{W}^{n-1} \circ \Delta_{W}([a])=\Delta_{W}^{n-1}((\delta_1(a),\delta_2(a),\ldots)) \\
&=& 
{\cred
\Delta_W^{n-1}([\delta_1(a)])+ \Delta_W^{n-2}\circ \Delta_W((0, \delta_2(a), \ldots)) } \\
&{\cred  =}&
{\cred \Delta_W^{n-1}([\delta_1(a)])+\Delta_{W}^{n-2}(\delta_2(a),\delta_3(a),\ldots)}
\end{eqnarray*}
{\cred by (1).}
Repeating this procedure, we obtain 
\[
\Delta_W^n([a])=\Delta_W^{n-1}([\delta_1(a)])+\Delta_W^{n-2}([\delta_2(a)])+\cdots+\Delta_W([\delta_{n-1}(a)])+[\delta_n(a)].
\]
Therefore, comparing the zero-th component, we have
\[
\Delta_n(a)=\Delta_{n-1} \circ \delta_1(a)+\cdots +\Delta_1 \circ \delta_{n-1}(a)+\delta_n(a).
\]
\end{proof}

\begin{lem}\label{lem:delta formula in mixed characteristic}
For $a \in A$ and $n \geq 1$, we have
\[
\Delta_n(a)=\frac{\Delta_1(a^{p^{n-1}})}{p^{{n-1}}}=\frac{a^{p^{n}}-(a^p-p\Delta_1(a))^{p^{n-1}}}{p^{n}}
\]
\end{lem}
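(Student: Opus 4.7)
The plan is to translate the problem into a ghost-component calculation, where the interaction between the Frobenius lift $\phi$ and the Witt operations becomes transparent. The key observation is the following: for any $\alpha \in W(A)$, choose a $\phi$-monomial decomposition $\alpha_{0} = \sum m_{i} x_{i}$ of its $0$-th component, so that $\alpha - V\Delta_{W}(\alpha) = \sum m_{i}[x_{i}]$ by the definition of $\Delta_{W}$. Taking the $r$-th ghost component and using that $\phi^{r}(x_{i}) = x_{i}^{p^{r}}$ (since each $x_{i}$ is a $\phi$-monomial) together with the fact that $\phi$ is a ring homomorphism, I obtain
\[
\varphi_{r}(\alpha) - p\,\varphi_{r-1}(\Delta_{W}(\alpha)) \;=\; \sum m_{i} x_{i}^{p^{r}} \;=\; \phi^{r}\Bigl(\sum m_{i} x_{i}\Bigr) \;=\; \phi^{r}(\alpha_{0}),
\]
i.e., the clean ghost-level recursion $\varphi_{r-1}(\Delta_{W}(\alpha)) = (\varphi_{r}(\alpha) - \phi^{r}(\alpha_{0}))/p$, independent of the chosen decomposition.

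Writing $g^{(k)}_{r} := \varphi_{r}(\Delta_{W}^{k}([a]))$, so that $g^{(0)}_{r} = a^{p^{r}}$ and $g^{(k)}_{0} = \Delta_{k}(a)$, the recursion becomes $g^{(k+1)}_{r-1} = (g^{(k)}_{r} - \phi^{r}(g^{(k)}_{0}))/p$. I will then prove by induction on $k \geq 1$ the closed formula
\[
g^{(k)}_{r} \;=\; \frac{a^{p^{r+k}} - \phi^{r+1}(a)^{p^{k-1}}}{p^{k}}.
\]
The case $k = 1$ is the recursion applied to $\alpha = [a]$. For the inductive step, using that $\phi$ commutes with $p$-th powers one computes $\phi^{r}(g^{(k)}_{0}) = (\phi^{r}(a)^{p^{k}} - \phi^{r+1}(a)^{p^{k-1}})/p^{k}$; substituting into the recursion, the two $\phi^{r+1}(a)^{p^{k-1}}$ terms cancel in the numerator, producing exactly the formula with $k$ replaced by $k+1$.

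Specializing to $r = 0$ and $k = n$ yields $\Delta_{n}(a) = (a^{p^{n}} - \phi(a)^{p^{n-1}})/p^{n}$. The $r = 1$ instance of the ghost recursion at $\alpha = [a]$ gives $\phi(a) = a^{p} - p\Delta_{1}(a)$, producing the second equality in the statement. For the first equality, since $\phi$ is a ring homomorphism, $\phi(a^{p^{n-1}}) = \phi(a)^{p^{n-1}}$, and hence $\Delta_{1}(a^{p^{n-1}}) = (a^{p^{n}} - \phi(a)^{p^{n-1}})/p = p^{n-1}\Delta_{n}(a)$. The main obstacle lies in the inductive step of the closed formula: while the ghost recursion is a direct consequence of the definition of $\Delta_{W}$, the telescoping cancellation in the induction depends on the precise way $\phi$ interacts with $p$-th powers, and one must carefully track the double exponents to check that the formula is preserved when passing from $k$ to $k+1$.
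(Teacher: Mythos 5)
Your proof is correct, and it takes a genuinely different route from the paper's. The paper starts from the single ghost identity $a^{p^r}=\phi^r(a)+\sum_{i=1}^{r}p^i\delta_i(a)^{p^{r-i}}$ obtained by expanding $[a]=\sum m_i[a_i]+V\Delta(a)$, then subtracts $\phi$ applied to the $r=n-1$ instance from the $r=n$ instance and telescopes using the identity $\Delta_n=\sum_{r=1}^{n-1}\Delta_r\circ\delta_{n-r}+\delta_n$ (their Lemma \ref{lem:inductive rule for delta}); the induction there is on $n$ and is applied to the auxiliary elements $\delta_i(a)$ rather than to $a$ itself. You instead extract the clean one-step ghost recursion $\varphi_{r-1}(\Delta_W(\alpha))=\bigl(\varphi_r(\alpha)-\phi^r(\alpha_0)\bigr)/p$ directly from the definition of $\Delta_W$, introduce the two-index array $g^{(k)}_r=\varphi_r(\Delta_W^k([a]))$, and prove the closed form $g^{(k)}_r=\bigl(a^{p^{r+k}}-\phi^{r+1}(a)^{p^{k-1}}\bigr)/p^k$ by induction on $k$ with $r$ free and $a$ fixed, specializing at $(r,k)=(0,n)$ at the end. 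Your approach buys two things: it dispenses with Lemma \ref{lem:inductive rule for delta} entirely, and the induction is both simpler to state (one parameter, fixed base element) and more uniform, since the closed formula describes all ghost components of every $\Delta_W^k([a])$ at once rather than only the $0$-th. The price is the extra bookkeeping of the index $r$; the paper's argument stays closer to the structure of the splitting $\sigma_n$ built in Section 3. One small point worth making explicit in a final write-up: the division by $p^k$ in $\phi^r(g^{(k)}_0)=\bigl(\phi^r(a)^{p^k}-\phi^{r+1}(a)^{p^{k-1}}\bigr)/p^k$ is legitimate because $g^{(k)}_0\in A$, $\phi(p)=p$, and $A$ is $p$-torsion-free, so the equality $p^k\phi^r(g^{(k)}_0)=\phi^r(a)^{p^k}-\phi^{r+1}(a)^{p^{k-1}}$ determines $\phi^r(g^{(k)}_0)$ uniquely in $A$.
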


\begin{proof}
First, we prove the latter equality.
Let $a\in A$. By the definition of $\Delta$, we have 
\[
[a]=\sum m_i [a_i]+V  \Delta(a)=\sum m_i [a_i]+(0,\delta_1(a),\delta_2(a),\ldots),
\]
where $m_i$ is an integer and $a_i$ is a $\phi$-monomial for every $i$.
Comparing the $r$-th component of the images by ${\cred w}$, we have
\begin{equation}\label{equation1:delta formula}
a^{p^{r}}=\phi^{r}(a)+p\delta_1(a)^{p^{r-1}}+ \cdots + p^{r-1}\delta_{r-1}(a)^p+p^r\delta_r(a)
\end{equation}
for all $r \geq 1$.
Taking $r=1$, we have 
\[
a^p=\phi(a)+p\delta_1(a)=\phi(a)+p\Delta_1(a){\cred ,}
\]
{\cred where the second equality follows from Lemma \ref{lem:inductive rule for delta} (2).}
Therefore, we obtain
\[
p\Delta_1(a^{p^{n-1}})=a^{p^{n}}-\phi(a)^{p^{n-1}}=a^{p^n}-(a^p-p\Delta_1(a))^{p^{n-1}}, 
\]
and thus the latter equation holds.

Next, we prove the first equality by induction on $n$.
We take $r=n-1$ in (\ref{equation1:delta formula}) and consider its image by $\phi$.
Then we have
\begin{equation}\label{equation2:delta formula}
\phi(a)^{p^{n-1}}=\phi^{n}(a)+p\phi(\delta_1(a)^{p^{n-2}})+ \cdots + p^{n-2}\phi(\delta_{n-2}(a)^p)+p^{n-1}\phi(\delta_{n-1}(a)).
\end{equation}
We take $r=n$ (\ref{equation1:delta formula}).
Taking the difference with (\ref{equation2:delta formula}), 
we have the equality of
\[
a^{p^n}-\phi(a)^{p^{n-1}}=p\Delta_1(a^{p^n-1}),
\]
and 
\begin{align}\label{equation:RHS}
    & p(\delta_1(a)^{p^{n-1}}-\phi(\delta_1(a)^{p^{n-2}}))+\cdots+p^{n-1}(\delta_{n-1}(a)^p-\phi(\delta_{n-1}(a)))+p^n\delta_n(a)\notag \\
    =&  p^2\Delta_1(\delta_1(a)^{p^{n-2}})+\cdots +p^n\Delta_1(\delta_{n-1}(a))+p^n\delta_n(a)\notag \\
    =& p^n(\Delta_{n-1}\circ \delta_1(a)+\cdots +\Delta_1 \circ \delta_{n-1}(a)+\delta_n(a))\notag \\
    =& p^n \Delta_n(a),
\end{align}
where the second and last equations in (\ref{equation:RHS}) follow from the induction hypothesis and Lemma \ref{lem:inductive rule for delta} respectively.
Therefore, we obtain
\[
p\Delta_1(a^{p^n-1})=p^n \Delta_n(a),
\]
as desired.
\end{proof}

\begin{rmk}
A notion similar to $\Delta_1$ has already appeared in \cite[{\cred Definition 2.1 and Remark 2.2}]{BS} as $\delta$, but we remark that they have different signs.
Indeed, we have 
\[
a^p=\phi(a)+p\Delta_1(a)
\]
in the proof of Lemma \ref{lem:delta formula in mixed characteristic}, 
and thus $\Delta_1(a)=-\delta(a)$.
\end{rmk}

\begin{lem}\label{lem:commutativity of delta}
{\cred We define the ring homomorphism
\[
\pi \colon A \rightarrow R\ ;\ X_f \mapsto f,
\]
and the additive map
\[
\var{\pi} \colon A \xrightarrow{\pi} R \to R/F(R).
\]
}
Then we have $\Delta_r \circ \pi=\var{\pi} \circ \Delta_r$ {\cred as a morphism of sets for every $r \geq 1$}.
\end{lem}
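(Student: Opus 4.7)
The plan is to lift $\pi$ to the Witt-vector level as the ring homomorphism $W(\pi)\colon W(A)\to W(R)$, compose with the reductions $W(R)\twoheadrightarrow \var{W}(R)\twoheadrightarrow \var{W}(R)/[F(R)]$ to obtain a homomorphism $\tilde\pi\colon W(A)\to \var{W}(R)/[F(R)]$, and show that $\tilde\pi$ intertwines $\Delta_W$ on $W(A)$ with $\var{\Delta}_W$ on $\var{W}(R)/[F(R)]$. The crucial observation is that $\pi$ sends every $\phi$-monomial of $A$ to a $p$-monomial of $R$: any $\phi$-monomial of $A$ has the form $\pm X_{f_1}^{a_1}\cdots X_{f_s}^{a_s}$, and its image $\pm f_1^{a_1}\cdots f_s^{a_s}$ is a $p$-monomial, since products of $p$-monomials are $p$-monomials and $-r$ is a $p$-monomial whenever $r$ is (because $(-1)^p=-1$ in characteristic $p$).

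The key claim is that $\var{\Delta}_W\circ \tilde\pi=\tilde\pi\circ \Delta_W$ as maps $W(A)\to \var{W}(R)/[F(R)]$. For $\alpha\in W(A)$, Lemma \ref{lem:well-definedness for delta map in mixed characteristic} provides a decomposition $\alpha-V\Delta_W(\alpha)=\sum_i m_i[x_i]$ with $m_i\in\Z$ and each $x_i$ a $\phi$-monomial. Applying $W(\pi)$ and reducing modulo $p$ yields
\[
\overline{W(\pi)(\alpha)}-V\,\overline{W(\pi)(\Delta_W(\alpha))}=\sum_i \bar m_i\,[\pi(x_i)] \quad\text{in } \var{W}(R).
\]
By the observation above, each $[\pi(x_i)]$ is a Teichmüller lift of a $p$-monomial, and rewriting $\bar m_i[\pi(x_i)]$ as a repeated sum of $[\pm\pi(x_i)]$ shows that the right-hand side is a sum of Teichmüller lifts of $p$-monomials, while $V\,\overline{W(\pi)(\Delta_W(\alpha))}$ lies in $\mathrm{Im}(V)$. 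The uniqueness part of Lemma \ref{lem:well-definedness for delta}(ii) then forces $\Delta_W(\overline{W(\pi)(\alpha)})=\tilde\pi(\Delta_W(\alpha))$ in $\var{W}(R)/[F(R)]$, establishing the claim.

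Iterating the claim---at step $k$ one applies it to $\Delta_W^{k-1}(\alpha)\in W(A)$, using that $\var{\Delta}_W$ on $\var{W}(R)/[F(R)]$ is computed by lifting to $\var{W}(R)$ and applying $\Delta_W$---gives $\var{\Delta}_W^r\circ\tilde\pi=\tilde\pi\circ\Delta_W^r$ for every $r\geq 1$. Specialising to $\alpha=[a]$ with $a\in A$, so that $W(\pi)([a])=[\pi(a)]$, and extracting the $0$-th component (which is well defined modulo $F(R)$, since $[F(R)]\subset\var{W}(R)$ contributes only $F(R)$ to the $0$-th slot) produces exactly $\Delta_r(\pi(a))=\var\pi(\Delta_r(a))$, as required. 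The main subtlety, and the only non-formal step, is the base case of the claim: one must verify that reducing a $\phi$-monomial decomposition in $W(A)$ modulo $p$ yields a genuine decomposition into Teichmüller lifts of $p$-monomials in $\var{W}(R)$, so that the uniqueness characterisation of $\Delta_W$ in characteristic $p$ applies; everything afterwards is formal.
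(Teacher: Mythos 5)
Your argument is correct and follows the same strategy as the paper's: push the monomial decomposition used to define $\Delta_W$ through $\pi$, observe that the image is a $p$-monomial decomposition in $R$, conclude by the uniqueness characterisation of $\Delta_W$ in characteristic $p$, and then iterate and specialise to $[a]$ to recover the statement for $\Delta_r$. The only real difference is that you justify the compatibility by classifying all $\phi$-monomials of $A$ as $\pm$ an actual monomial (true, but more than is needed), whereas the paper simply picks the decomposition $\alpha-V\Delta_W(\alpha)=\sum_i m_i[x_i]$ with each $x_i$ an honest monomial of $A$ — which exists and suffices by the uniqueness in Lemma \ref{lem:well-definedness for delta map in mixed characteristic} — thereby avoiding any classification of $\phi$-monomials.
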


\begin{proof}
First, we note that the images of variables in $A$ by $\pi$ are $p$-monomials.
Therefore, the images of monomials in $A$ by $\pi$ are $p$-monomials.
We take $x \in A$.
Then there exists a $\phi$-monomial decomposition $x =\sum m_i x_i$ such that every $x_i$ is a monomial and $m_i$ is an integer.
Then $\pi(x_i)$ is a $p$-monomial, {\cred and} thus $\pi(m_ix_i)=m_i\pi(x_i)$ {\cred $= m_i^p \pi (x_i)$} is also a $p$-monomial.
Therefore, $\pi(x)=\sum \pi(m_i x_i)$ is a $p$-monomial decomposition.
Furthermore, by the definition of $\Delta$, we have
\[
V \Delta(x)=[x]-\sum m_i [x_i],\ \text{and}\ V \Delta(\pi(x))=[\pi(x)]-\sum [\pi(m_i x_i)].
\]
Denoting natural maps $W(A) \to \var{W}(R)$ by $\pi_W$ and $W(A) \to \var{W}(R)/[F(R)]$ by $\var{\pi}_W$, then we have
\begin{align*}
    V \var{\pi}_W(\Delta(x))
    &= {\cred {\pi}_W}(V \Delta(x))=\pi_W([x])-\sum \pi_W(m_i [x_i]) \\
    &=[\pi(x)]-\sum [\pi(m_i x_i)]=V \Delta(\pi(x)).
\end{align*}
{\cred Here, $V$ is the injective additive map defined in (\ref{eqn:VfromW/F}).}
Therefore, we have  $\var{\pi}_W(\Delta(x))=\Delta(\pi(x))$ for all $x \in A$.
Since $\Delta_W(\alpha)=\Delta(a_0)+(a_1,a_2,\ldots)$ for $\alpha=(a_0,a_1,a_2,\ldots)$, we also have $\var{\pi}_W \circ \Delta_W=\Delta_W \circ \pi_W$.
By the construction of $\Delta_r$ it implies the commutativity for $\Delta_r$.
\end{proof}

\begin{thm}[Delta formula (Theorem \ref{Intro:thm:delta formula})]\label{thm:delta formula}
{\cred We have an equality}
$\Delta_n(f)=f^{p^n-p} \Delta_1(f)$
{\cred in $R/F(R)$}
for all
{\cred $n\geq 1$ and}
$f \in R$.
{\cred Note that since the exponent of $f$ is $p$-divisible, the product is well-defined.}
\end{thm}

\begin{proof}
{\cred We may assume $n \geq 2$.}
Let $f \in R$.
Let $\pi$ and $\var{\pi}$ be maps as in Lemma \ref{lem:commutativity of delta}.
Since every element of $R$ has a $p$-monomial decomposition, we have $\pi$ is surjective.
Therefore, we can take a lift $x \in A$ of $f$.
By Lemma \ref{lem:commutativity of delta}, we have $\Delta_n(f)=$ {\cred $\Delta_n (\pi(x)) =$} $\var{\pi}(\Delta_n(x))$.
By Theorem \ref{lem:delta formula in mixed characteristic}, the latter part is
\[
\var{\pi}(\Delta_n(x))=\var{\pi}(\frac{x^{p^{n}}-(x^p-p\Delta_1(x))^{p^{n-1}}}{p^{n}}).
\]
{\cred On the other hand, we have}
\begin{align*}
    &\frac{x^{p^{n}}-(x^p-p\Delta_1(x))^{p^{n-1}}}{p^{n}} \\
    =&x^{p^n-p}\Delta_1(x)-\frac{1}{p^{n-2}}\binom{p^{n-1}}{2}x^{p^n-2p}\Delta_1(x)^{2}+\cdots +(-1)^{p+1}p^{p^{n-1}-n}\Delta_1(x)^{p^{n-1}},
\end{align*}
{\cred where the $p$-order $\nu_p$ of each coefficient (other than the first term) is given by
\[
\nu_p\binom{p^{n-1}}{i} - (n-i) = n - 1- \nu_p (i) -(n-i) =i-1 - \nu_p(i)
\]
for $i\geq 2$ by Kummer's theorem.
Moreover, we have
\[
i-1-\nu_p(i) \geq p^{\nu_p(i)} -1-\nu_p(i) \geq 0,
\]
and the left-hand side is equal to $0$ if and only if $p=2$ and $i=2$.
In this case, the term is given by
\[
-\frac{1}{2^{n-2}} \binom{2^{n-1}}{2} x^{2^n-4}\Delta_1(x)^2
=-(2^{n-1}-1)x^{2^n-4} \Delta_1(x)^2,
\]
whose image by $\overline{\pi}$ is contained in the image of $F$.
Therefore,} the image of $\Delta_{n}(x)$ by $\var{\pi}$ is
\[
\var{\pi}(\Delta_n(x))=\var{\pi}(x^{p^n-p}\Delta_1(x))=f^{p^n-p}\Delta_1(f),
\]
as desired.
{\cred Here, the first equality follows from the above argument, and the second equality follows from Lemma \ref{lem:commutativity of delta}.
Note that for any $a, b \in A$, we have $\overline{\pi}(a^pb) = \overline{\pi} (a)^p \overline{\pi}(b)$ since $\pi$ is a ring homomorphism.
}
\end{proof}

\section{Fedder type criteria for quasi-$F$-splitting}\label{Section:Fedder type criteria for quasi-$F$-splitting}
In this section, we use the following notation.
\begin{conv}\label{conv:regular local ring}
Throughout this section, a pair $(R,\m,k)$ is an $F$-finite regular local ring of characteristic $p>0$ with residue field $k$ and we take {\cred $v_1, \ldots ,v_d \in R$ as in Convention \ref{conv:basis},} where $v_d=(x_1 \cdots x_n)^{p-1}$.
Furthermore, we denote the dual basis by $u_1, \ldots, u_d \in \Hom_R(F_*R,R)$ with respect to ${\cred F_*v_1,\ldots, F_*v_d}$.
We note that $u_d$ is a generator of $\Hom_R(F_*R,R)$ as an $F_*R$-module, 
{\cred since} $u_i =u_d( {\cred F_*( v_d/v_i)} \cdot \underline{\ \ })$ for all $i$.
Moreover, we denote $v_d$ by $v$ and $u_d$ by $u$, for short.
We denote the kernel of $u$ by $v^\perp {\cred \subset F_*R}$.
{\cred For $F_*g \in v^\perp$ and an integer $n \geq 1$, the $R$-module homomorphism $u^{n}(F^n_*(g )\cdot \underline{\ \ }) \colon F^n_* R \to R$ induces an $R$-module homomorphism $u^{n}_g \colon F^{n-1}_*(F_* R / R) \to R$.
For $a \in R / F(R)$, we denote $u_g^n(F^n_* a)$ by $u^{n}(F^n_*(g a))$, by abuse of notation.
We note that for any representative $\tilde{a} \in R$ of $a \in R / F(R)$, we have $u^n(F^n_*(g \tilde{a})) = u^{n}(F^n_*(g a))$.
}
\end{conv}

In the previous section, we study the explicit structure of the splitting of $V^{n-1}$.
By using the splitting $\sigma_{n}$ of the exact sequence
\[
\xymatrix{
0 \ar[r] & F^{n-1}_*(F_*R/R) \ar[r] & Q_{R,n} \ar[r] & Q_{R,n-1} \ar[r] & 0,
}
\]
we can describe the structure of $\Hom_R(Q_{R,n},R)$, inductively, and we obtain the explicit {\cred $R$-module isomorphism}
\[
\Psi_n \colon F_*R \oplus \bigoplus_{2 \leq e \leq n} F^{e-1}_*
{\cred v^\perp}
\to \Hom_R(Q_{R,n},R).
\]
In other words, we describe all {\cred $R$-module} homomorphisms from $Q_{R,n}$ to $R$ by using tuples of elements of $R$, explicitly.
After that, using the description, we prove Theorem \ref{Intro:thm:Fedder's criterion}.

First of all, we prove the following result.

\begin{lem}\label{lem:basis for Witt ring}
$Q_{R,n}$ is a free $R$-module for all $n\in\Z_{>0}$.
\end{lem}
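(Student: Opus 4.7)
The plan is to induct on $n$, leveraging the splitting of the canonical short exact sequence
\[
0 \to F^{n-1}_*(F_*R/R) \xrightarrow{V^{n-1}} Q_{R,n} \xrightarrow{\pi} Q_{R,n-1} \to 0
\]
that was flagged in the strategy outline and explicitly constructed in Proposition \ref{prop:delta gives a splitting}. The base case $n = 1$ is immediate: $Q_{R,1} = F_*R$ is free over $R$ with basis $\{v_1, \ldots, v_d\}$ by Convention \ref{conv:regular local ring}.

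For the inductive step, I would invoke the splitting $\sigma_n$ from Proposition \ref{prop:delta gives a splitting} to obtain the $R$-module decomposition
\[
Q_{R,n} \cong F^{n-1}_*(F_*R/R) \oplus Q_{R,n-1},
\]
where $Q_{R,n-1}$ is free by induction. It then suffices to show $F^{n-1}_*(F_*R/R)$ is free over $R$.

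For this, I would use that the chosen basis $\{F_* x_1^{i_1} \cdots x_N^{i_N}\}$ contains $F_* 1$ (the index $(0,\ldots,0)$), and the Frobenius inclusion $R \hookrightarrow F_*R$ sending $a \mapsto F_* a^p = a \cdot F_* 1$ identifies $R$ with the free summand generated by this one basis vector. Hence $F_*R/R$ is free over $R$ on the remaining basis elements. Freeness is preserved by $F^{n-1}_*$ via the identification $F^{n-1}_*(R^{\oplus k}) = (F^{n-1}_*R)^{\oplus k}$, and $F^{n-1}_*R$ is itself free because $R$ is $F$-finite regular local: Kunz's theorem gives flatness of Frobenius, finite generation then yields freeness over the local ring (an iterated application of Lemma \ref{lem:standard basis} works equally well).

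The only mild obstacle is verifying that the $R$-structure on the image of $V^{n-1}$ inside $Q_{R,n}$ agrees with the Frobenius-twisted $R$-structure on $F^{n-1}_*(F_*R/R)$, so that the splitting is genuinely an $R$-module splitting. This is a quick check via the projection formula in the Witt ring: $a \cdot V^{n-1}[b] = [a^p] V^{n-1}[b] = V^{n-1}(F^{n-1}[a^p]\,[b]) = V^{n-1}[a^{p^n} b]$, which matches $a$ acting as $a^{p^n}$ on $F_* b$ inside $F^{n-1}_* F_* R$, as required.
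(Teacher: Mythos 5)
Your argument is correct and follows the same route as the paper: induct on $n$, use the split exact sequence from Proposition~\ref{prop:delta gives a splitting} to write $Q_{R,n}\cong F^{n-1}_*(F_*R/R)\oplus Q_{R,n-1}$, and observe that both summands are free. You simply fill in details the paper leaves implicit (why $F_*R/R$ is free, and the $R$-module structure check on the image of $V^{n-1}$), which are fine and correct.
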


\begin{proof}
By Kunz's theorem, $Q_{R,1}=F_*R$ is free.
Since $R$ is regular, and in particular $F$-split, $F_{*}R/R$ is free.
Now, we can deduce from the exact sequence
\[
\xymatrix{
0 \ar[r] & F^{n}_*(F_*R/R) \ar[r] & Q_{R,n+1} \ar[r] & Q_{R,n} \ar[r] & 0
}
\]
in Proposition \ref{prop:exact sequence of Q_n's} that
$Q_{R,n+1}$ is free inductively.
\end{proof}

\begin{lem}\label{lem:structure of homomorphism for n}
There exists an {\cred $R$-module} isomorphism
\[
\Psi_n \colon F_*R \oplus \bigoplus_{2 \leq e \leq n} F^{e-1}_* {\cred v^\perp}\to \Hom_R(Q_{R,n},R)
\]
for $n\in\Z_{>0}$ {\cred satisfying the following:}
Let $\psi_{(g_1,\ldots,g_n)}:=\Psi_n(({\cred F_*g_1,\ldots,F_*^ng_n}))$
{\cred
for 
\[
(F_*g_1, \ldots, F^n_*g_n) \in  F_*R \oplus \bigoplus_{2 \leq e \leq n} F^{e-1}_*v^\perp \subseteq
\bigoplus_{1 \leq e \leq n} F^{e}_* R.
\]
}
Then we have
\[
\psi_{(g_1,\ldots,g_n)}({\cred F_* (}V^{s-1}[a]))=\sum_{r=0}^{n-s} u^{r+s}(F^{r+s}_*{\cred (} g_{r+s}\Delta_{r}(a) {\cred )})
\]
for $s\in\{1,\ldots, n\}$, where we set $\Delta_0(a):=a$,
{\cred and $u^{r+s}(F^{r+s}_*(g_{r+s}\Delta_{r}(a)))$ for $r \geq 1$ is defined in Convention \ref{conv:regular local ring}.}
\end{lem}

\begin{proof}
We prove the assertion by induction on $n$.
When $n=1$, it follows from the fact that $u$ is a generator of $\Hom_R(F_*R,R)$ as an $F_*R$-module, that is,
the map
\[
\Psi_1 \colon F_*R \to \Hom_R(Q_{R,n},R)\ ;\ g \mapsto \psi_g:=(F_*a \mapsto u(F_* {\cred (}ga {\cred )}))
\]
is an {\cred $R$-module} isomorphism.
We assume $n \geq 2$. We consider the exact sequence
\[
\xymatrix{
0 \ar[r] & F^{n-1}_*(F_*R/R) \ar[r]^-{V^{n-1}} & Q_{R,n} \ar[r]^-{\pi} & Q_{R,n-1} \ar[r] & 0,
}
\]
in Proposition \ref{prop:exact sequence of Q_n's}. 
By Proposition \ref{prop:delta gives a splitting},
the above sequence splits by $\sigma_n \colon Q_{R,n} \to F^{n-1}_*(F_*R/R)$.
Thus, we obtain an {\cred $R$-module} isomorphism
\[
(\pi^*,\sigma_n^*) \colon \Hom_R(Q_{R,n-1},R) \oplus \Hom_R(F^{n-1}_*(F_*R/R),R) \to \Hom_R(Q_{R,n},R).
\]
Using the {\cred $R$-module} isomorphism
\[
F^n_*R \to \Hom_R(F^n_*R,R) \ ;\ F^n_*g \mapsto (F^n_*a \mapsto u^n(F_*^n {\cred (}ga {\cred )})),
\]
we obtain 
\[
F^{n-1}_*{\cred v^\perp} \to \Hom_R(F^{n-1}_*(F_*R/R),R). 
\]
We denote the image of $g$ by the above map by $\var{\psi}_g$.
By definition, the composition of the maps
\[
F^n_*R \rightarrow F^{n-1}_*(F_*{\cred R} /R) \xrightarrow{\var{\psi}_g} R
\]
coincides with $\psi_g$.
By the induction hypothesis, we have an $R$-module isomorphism
\[
\Psi_{n-1} \colon F_*R \oplus \bigoplus_{2 \leq e \leq n-1} F^{e-1}_*{\cred v^\perp} \to \Hom_R(Q_{R,n-1},R)
\]
satisfying the conditions in the statement.
Combining the above isomorphisms, we obtain an $R$-module isomorphism
\[
\Psi_n \colon F_*R \oplus \bigoplus_{2 \leq e \leq n} F^{e-1}_*{\cred v^\perp} \to \Hom_R(Q_{R,n},R).
\]
By the definition of $\Psi_n$, we have
\[
\Psi_n(({\cred F_*g_1,\ldots, F^n_*g_n}))=\pi^*\Psi_{n-1}(({\cred F_*g_1,\ldots, F^{n-1}_*g_{n-1}}))+\sigma_n^*\var{\psi}_{g_n}.
\]
We take $a \in R$ and $s \in \{1,\ldots,n\}$.
When $s=n$, we have $\pi(V^{n-1}[a])=0$, and thus
\[
\Psi_n(({\cred F_*g_1,\ldots, F^n_*g_n}))({\cred F_* (}V^{n-1}[a]{\cred )})=\var{\psi}_{g_n}(\sigma_n(V^{n-1}[a]))=\var{\psi}_{g_{n}}(a)=u^n(F^n_* {\cred (}g_na {\cred )}),
\]
where we note $\sigma_n({\cred F_*}V^{n-1}[a])=a$.
When $s \leq n-1$, we have
\begin{align*}
    &\Psi_n(({\cred F_*g_1,\ldots, F^n_*g_n}))({\cred F_* (} V^{s-1}[a]{\cred )}) \\
    =&\Psi_{n-1}(({\cred F_*g_1,\ldots, F^{n-1}_*g_{n-1}}))({\cred F_*(}V^{s-1}[a]{\cred )})+(u^n(F^n_*{\cred (}g_n \sigma_n({\cred F_* (}V^{s-1}[a]{\cred )}){\cred )})) \\
    =&\sum_{r=0}^{n-1-s} u^{r+s}(F^{r+s}_* {\cred (} g_{r+s}\Delta_{r} (a) {\cred )})+u^n(F^n_*{\cred (}g_n \sigma_n({\cred F_* (}V^{s-1}[a]{\cred )}){\cred )})
\end{align*}
by the induction hypothesis.
By Proposition \ref{prop:delta gives a splitting}, $\sigma_n(V^{s-1}[a])=\Delta_{n-s}(a)$.
Therefore, we have
\[
\Psi_n((g_1,\ldots,g_n))({\cred F_* (}V^{s-1}[a]{\cred )})=\sum_{r=0}^{n-s} u^{r+s}(F^{r+s}_* {\cred (}g_{r+s}\Delta_r(a){\cred )}),
\]
as required.
\end{proof}

\begin{lem}\label{lem:commutativity of the structure morphism}
We have the following commutative diagram
{\cred of $R$-modules}
\[
\xymatrix{
F_*R \oplus \bigoplus_{\substack{2 \leq e \leq n}} F^{e-1}_*{\cred v^\perp} \ar[r]^-{\Psi_n}_-{{\cred \simeq}} \ar@{^{(}->}[d] \ar@{}[rd]|{\circlearrowright} & \Hom_R(Q_{R,n},R) \ar@{^{(}->}[d]^-{\pi^*} \\
F_*R \oplus \bigoplus_{2 \leq e \leq n+1} F^{e-1}_*{\cred v^\perp} \ar[r]^-{\Psi_{n+1}}_-{{\cred \simeq}}  & \Hom_R(Q_{R,n+1},R),
}
\]
where the left vertical map is defined by 
\[
({\cred F_* g_1,\ldots, F^n_*g_n}) \mapsto ({\cred  F_* g_1, \ldots, F^n_*g_n},0).
\]
\end{lem}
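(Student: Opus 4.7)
The plan is to verify the commutativity by direct computation on a generating set, using the explicit formula from Lemma \ref{lem:structure of homomorphism for n}. Concretely, since $Q_{R,n+1}$ is generated as an abelian group by elements of the form $V^{s-1}[a]$ with $a\in R$ and $1\le s\le n+1$, and since $\pi$ is the map induced by the restriction map $W_{n+1}(R)\to W_n(R)$, we have $\pi(V^{s-1}[a])=V^{s-1}[a]$ for $1\le s\le n$ and $\pi(V^{n}[a])=0$.

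First I would fix a tuple $(g_1,\dots,g_n)$ and set $g_{n+1}=0$. By the formula in Lemma \ref{lem:structure of homomorphism for n},
\[
\Psi_{n+1}((g_1,\dots,g_n,0))(V^{s-1}[a])
=\sum_{r=0}^{n+1-s}u^{r+s}\!\bigl(F^{r+s}_{*}g_{r+s}\,\Delta_{r}(a)\bigr).
\]
For $1\le s\le n$, the term with $r=n+1-s$ vanishes because $g_{n+1}=0$, leaving exactly
\[
\sum_{r=0}^{n-s}u^{r+s}\!\bigl(F^{r+s}_{*}g_{r+s}\,\Delta_{r}(a)\bigr)
=\Psi_{n}((g_1,\dots,g_n))(V^{s-1}[a])
=\pi^{\ast}\Psi_{n}((g_1,\dots,g_n))(V^{s-1}[a]).
\]
For $s=n+1$, the left-hand side equals $u^{n+1}(F^{n+1}_{*}0\cdot a)=0$ and the right-hand side equals $\Psi_{n}((g_1,\dots,g_n))(\pi(V^{n}[a]))=\Psi_{n}((g_1,\dots,g_n))(0)=0$, so both sides agree.

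Alternatively, one can read the commutativity directly off the inductive construction of $\Psi_{n+1}$ inside the proof of Lemma \ref{lem:structure of homomorphism for n}: that construction splits $\Hom_R(Q_{R,n+1},R)$ as $\pi^{\ast}\Hom_R(Q_{R,n},R)\oplus \sigma_{n+1}^{\ast}\Hom_R(F^{n}_{\ast}(F_{\ast}R/R),R)$ via the splitting $\sigma_{n+1}$ of $V^{n}$, and defines
\[
\Psi_{n+1}((g_1,\dots,g_{n+1}))=\pi^{\ast}\Psi_{n}((g_1,\dots,g_n))+\sigma_{n+1}^{\ast}\bar{\psi}_{g_{n+1}}.
\]
Setting $g_{n+1}=0$ annihilates the second summand because $\bar{\psi}_{0}=0$, and the remaining first summand is exactly $\pi^{\ast}\Psi_{n}((g_1,\dots,g_n))$.

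I do not expect any genuine obstacle here; the statement is essentially a bookkeeping consequence of how $\Psi_{n+1}$ was assembled from $\Psi_n$ and $\sigma_{n+1}$. The only mild care needed is to keep track of what $\pi$ does to the generators $V^{s-1}[a]$ in the two distinct ranges $s\le n$ and $s=n+1$, so that the $r=n+1-s$ term in the formula of Lemma \ref{lem:structure of homomorphism for n} is correctly identified as the contribution controlled by $g_{n+1}$.
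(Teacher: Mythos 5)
Your proposal is correct, and your "alternative" argument is precisely the paper's proof: the paper simply unwinds the inductive definition $\Psi_{n+1}((g_1,\dots,g_{n+1}))=\pi^{*}\Psi_{n}((g_1,\dots,g_{n}))+\sigma_{n+1}^{*}\bar{\psi}_{g_{n+1}}$ and notes that the second summand vanishes when $g_{n+1}=0$. Your first, more explicit verification on the generators $V^{s-1}[a]$ via the formula of Lemma \ref{lem:structure of homomorphism for n} is a correct expansion of the same observation, not a genuinely different route.
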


\begin{proof}
The assertion follows from the construction of $\Psi$.
We note that
\[
\Psi_{n+1}(({\cred F_*g_1,\ldots, F^n_*g_n},0))=\pi^*\Psi_{n}(({\cred F_* g_1,\ldots, F^n_*g_n}))+\sigma_n^*\var{\psi}_0=\pi^*\Psi_{n}(({\cred F_*g_1,\ldots, F^n_*g_n})).
\]
\end{proof}

\begin{defn}\label{defn:structure morphism}
Let $\psi \in \Hom_R(Q_{R},R)$.
We say that $\psi$ is \emph{of rank $n$} if $\psi$ {\cred factors through the restriction map $Q_R \rightarrow Q_{R,n}$.} 
By Lemma \ref{lem:commutativity of the structure morphism}, we obtain a{\cred n $R$-module homomorphism}
\[
\Psi \colon F_*R \oplus \bigoplus_{2 \leq e}F^{e-1}_*{\cred v^\perp} \to \Hom_R(Q_R,R)
\]
as {\cred the} direct limit of $\Psi_n$.
Then the map {\cred $\Psi$} is injective and its image coincides with the set of maps of finite rank.
Let $({\cred F^i_*}g_i)_i\in F_*R \oplus \bigoplus_{2 \leq e}F^{e-1}_*{\cred v^\perp}$. 
Then $\Psi((g_i)_i)$ is of rank $n$ if and only if $g_i=0$ for all $i>n$.
In this case, we denote the image by $\psi_{(g_1,\ldots,g_n)}$.
Therefore, every rank $n$ homomorphism can be denoted by such a form.
\end{defn}

\begin{lem}\label{lem:condition for inducing map}
Let $\psi:=\psi_{(g_1,\ldots,g_n)}$ be a{\cred n} {\cred $R$}-homomorphism of rank $n$, and $I=(f_1,\ldots,f_m)$ and $J$ ideals of $R${\cred , where $f_1, \ldots, f_m \in R$}.
Then the following conditions are equivalent.
\begin{itemize}
    \item[\textup{(1)}] $\psi({\cred F_*W_n(I) Q_{R,n}}) \subseteq J$,
    \item[\textup{(2)}] for all $1 \leq s \leq n$ and $x \in I$,
    \[
    \sum_{r=0}^{n-s} u^r(F^{r}_* {\cred (}g_{r+s}\Delta_{r}(x) {\cred )}) \in J^{[p^s]},
    \]
    and
    \item[\textup{(3)}] for all $1 \leq s \leq n$ and $1 \leq j \leq m$,
    \[
    \sum_{r=0}^{n-s} u^r(F^r_* {\cred (}g_{r+s}\Delta_r(f_j){\cred )}) \in J^{[p^s]}.
    \]
\end{itemize}
\end{lem}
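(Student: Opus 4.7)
The plan is to establish all three equivalences via the explicit formula from Lemma~\ref{lem:structure of homomorphism for n}, which I rewrite by factoring $u^{r+s}(F^{r+s}_*(-)) = u^s(F^s_*(u^r(F^r_*(-))))$ as
\[
\psi(V^{s-1}[x]) \;=\; u^s\!\bigl(F^s_* z_s(x)\bigr),\qquad z_s(x) \;:=\; \sum_{r=0}^{n-s} u^r\bigl(F^r_* g_{r+s}\Delta_r(x)\bigr).
\]
A preliminary observation is that, in characteristic~$p$, the identity $(a_0,\ldots,a_{n-1}) = \sum_{k=0}^{n-1} V^k[a_k]$ holds universally in $W_n(R)$ (checked by comparing ghost components over~$\mathbb{Z}$ and reducing modulo~$p$). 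Consequently, the image of $F_*W(I)$ in $Q_{R,n}$ is generated as an $R$-module by $V^{s-1}[y]$ with $y\in I$ and $1\le s\le n$, so by $R$-linearity of $\psi$, condition~(i) is equivalent to $\psi(V^{s-1}[y])\in J$ for every such pair.

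The implications (ii)$\Rightarrow$(i) and (ii)$\Rightarrow$(iii) are then immediate. For (ii)$\Rightarrow$(i): if $z_s(x)\in J^{[p^s]}$, then $F^s_*z_s(x)\in F^s_*J^{[p^s]} = J\cdot F^s_*R$ (since $b\cdot F^s_*c = F^s_*(b^{p^s}c)$), so $u^s(F^s_*z_s(x))\in J$. And (ii)$\Rightarrow$(iii) follows by specializing $x$ to a generator~$f_j$. For (iii)$\Rightarrow$(ii), I use the Leibniz rule $\Delta_r(ab)=a^{p^r}\Delta_r(b)+b^{p^r}\Delta_r(a)$ of Proposition~\ref{prop:rule for delta map}(iv), which, combined with the $R$-linearity identity $u^r(F^r_*(c^{p^r}\cdot-))=c\cdot u^r(F^r_*(-))$, yields the derivation identity $z_s(ab) = a\,z_s(b) + b\,z_s(a)$ on~$R$. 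Together with the Witt-style additivity correction coming from $\Delta_W$ being additive (Proposition~\ref{prop:rule for delta map}(ii)) while $[x+y]$ differs from $[x]+[y]$ by a $V$-term, this allows me to expand $z_s\bigl(\sum_j a_j f_j\bigr)$ as $\sum_j a_j z_s(f_j)$ (each in $J^{[p^s]}$ by~(iii)) plus cross terms whose $f_j$-content and $\Delta$-structure force them into $J^{[p^s]}$ by a careful induction on the number of summands.

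For (i)$\Rightarrow$(ii), fix $y\in I$; then for every $a\in R$ we have $ay\in I$ and so $u^s(F^s_* z_s(ay))\in J$ by~(i). Using the derivation rule $z_s(ay) = a\,z_s(y) + y\,z_s(a)$ and the extraction lemma ``if $u^s(F^s_*(aw))\in J$ for every $a\in R$, then $w\in J^{[p^s]}$'' (proved by writing $R$ as a free $F^s(R)$-module with basis $\{v_I^{(s)}\}$ and identifying $u^s(F^s_*(aw))$ with the bilinear pairing $\sum_I c_I\tilde{c}_{D^{(s)}-I}$ in the coordinates $w=\sum c_I^{p^s}v_I^{(s)}$, $a=\sum \tilde{c}_I^{p^s}v_I^{(s)}$, using $u^s(F^s_*v_I^{(s)})=\delta_{I,D^{(s)}}$ with $D^{(s)}=(p^s-1,\ldots,p^s-1)$), one forces the coordinates of $z_s(y)$ into $J$. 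The principal obstacle is to absorb the cross term $u^s(F^s_*(y\,z_s(a)))$ arising in the derivation expansion; I would handle this by downward induction on $s$, with base case $s=n$ where $z_n(y)=g_n y$ admits no cross term (since $\Delta_0=\mathrm{id}$), and using the inductive conclusion of~(ii) at levels $s'>s$ to control the contributions at level~$s$.
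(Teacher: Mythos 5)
Your preliminary observation (that $(a_0,\ldots,a_{n-1})=\sum_k V^k[a_k]$ in $W_n(R)$, so (i) reduces to $\psi(V^{s-1}[y])\in J$ for $y\in I$ and $1\le s\le n$) is correct, as are (ii)$\Rightarrow$(i) and (ii)$\Rightarrow$(iii). The two hard directions, however, both rest on claims about $z_s$ that do not hold. First, the asserted derivation identity $z_s(ab)=a\,z_s(b)+b\,z_s(a)$ is false: the Leibniz rule of Proposition~\ref{prop:rule for delta map}(iv) is stated only for $r\ge 1$, while $\Delta_0=\mathrm{id}$ is not a derivation, so tracking the $r=0$ term in $z_s=\sum_{r=0}^{n-s}u^r(F^r_*g_{r+s}\Delta_r(\,\cdot\,))$ gives $z_s(ab)=a\,z_s(b)+b\,z_s(a)-g_sab$. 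Second, and more fundamentally, the cross term $u^s(F^s_*(y\,z_s(a)))$ in your (i)$\Rightarrow$(ii) argument is not controlled by a downward induction on $s$: the quantity $z_s(a)$ lives entirely at level $s$, and the conclusion of (ii) at levels $s'>s$ says nothing about it. The idea you are missing is to take the multiplier to be a \emph{$p$-monomial} $y$: then $\Delta_r(y)=0$ for all $r\ge 1$ by Proposition~\ref{prop:rule for delta map}(i), hence $\Delta_r(xy)=y^{p^r}\Delta_r(x)$ for every $r\ge 0$ (including $r=0$), so $z_s(xy)=y\,z_s(x)$ exactly, with no cross term at all; since $p$-monomials additively span $R$ and $\psi$, $u^s$ are additive, this feeds the extraction lemma directly.

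The direction (iii)$\Rightarrow$(ii) has a separate gap of the same flavor: $z_s\colon R\to R$ is not additive, because $\Delta_r$ is not --- only $\Delta_W$ is additive on $\var{W}(R)$, and $[x+y]\ne[x]+[y]$ --- so $z_s\bigl(\sum_j a_jf_j\bigr)$ does not decompose as $\sum_j a_j z_s(f_j)$ plus anything you control, and the ``careful induction on the number of summands'' is exactly where all the content is hiding. The paper avoids both difficulties by proving (iii)$\Rightarrow$(i) rather than (iii)$\Rightarrow$(ii): it forms the ideal $I'\subseteq\var{W}(R)$ generated by $\{V^{s-1}[f_j]\}$, reduces to showing $\psi(\alpha V^{s-1}[f_j])\in J$, and expands the multiplier $\alpha$ as a sum of $V^{t-1}[y]$ with $y$ a $p$-monomial, modulo $\mathrm{Im}(V^n)$ which $\psi$ kills. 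Working at the level of the genuinely additive homomorphism $\psi$ on $\var{W}(R)$, rather than with the nonlinear functions $z_s$ on $R$, is what makes the argument close.
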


\begin{proof}
First, we prove (1) $\Rightarrow$ (2).
We take an integer $s \in \{1,\ldots,n\}$ and $x \in I$.
For every $p$-monomial $y \in R$, we have
\[
\psi({\cred F_* (}V^{s-1}[xy]{\cred )})=\sum_{r=0}^{n-s} u^{r+s}(F^{r+s}_*{\cred (}g_{r+s}\Delta_{r}(xy){\cred )})
\]
by the definition of $\Psi$ and Lemma \ref{lem:structure of homomorphism for n}.
Since $V^{s-1}[xy]\in W(I)$, the above element is contained in $J$.
Since $y$ is a $p$-monomial, we have $\Delta_{r}(xy)=y^{p^{r}}\Delta_{r}(x)$ by Proposition \ref{prop:rule for delta map} {\cred (1) and (4)}.
Therefore, we have
\[
u^s(F^s_* {\cred (}y\sum_{r=0}^{n-s} u^{r}(F^r_* {\cred (}g_{r+s}\Delta_r(x) {\cred )}) {\cred )}) \in J.
\]
Since $p$-monomials generate $R$ as groups, we obtain that
\[
u^s(F^s_* {\cred (}R\sum_{r=0}^{n-s} u^{r}(F^r_* {\cred (}g_{r+s}\Delta_r(x){\cred )} ){\cred )}) \subseteq J.
\]
Thus, we obtain (2) by the proof of \cite[Lemma 1.6]{Fedder}.

We note that (2) $\Rightarrow$ (3) is clear.
We finish the proof of this lemma {\cred by} proving (3) $\Rightarrow$ (1).
We put the ideal $I'$ of $\var{W}(R)$ generated by
\[
\{V^{s-1}[f_j] \mid 1 \leq j \leq m,\ 1 \leq s \}. 
\]
Then $I'\subseteq W(I){\cred \var{W}(R)}$.
\begin{claim}
$\psi(I')\subseteq J$ holds. 
\end{claim}
\begin{claimproof}
It is enough to show that $\psi({\cred F_* (}\alpha V^{s-1}[f_j] {\cred )}) \in J$ for $1 \leq j \leq m$, $1 \leq s$, and  $\alpha \in \var{W}(R)$.
Since $\psi$ is of rank $n$, we may assume that $s \leq n$.
Furthermore, $\alpha$ is denoted by a sum of {\cred some} elements of the set
\[
\{ V^{t-1}[y] \mid y \colon \text{$p$-monomial},\ 1 \leq t \leq n\}
\]
and an element of $\mathrm{Im}(V^n)$.
Since $\psi$ is of rank $n$, it is enough to show that $\psi({\cred F_* (}V^{s-1}[yf_j] {\cred )})$ {\cred $\in J$}
for $1 \leq j \leq m$, $1 \leq s \leq n$, and a $p$-monomial $y \in R$.
By the argument above and the assumption of (3), we have
\[
\psi({\cred F_* (}V^{s-1}[yf_j] {\cred )})=u^s(F^s_*{\cred (}y\sum_{r=0}^{n-s} u^r(F^r_*{\cred (}g_{r+s}\Delta_r(f_j){\cred )}){\cred )}) \in u^s(F^s_*J^{[p^s]}) \subseteq J, 
\]
as required.
\end{claimproof}\\
We take $\alpha:=(a_0,a_1,\ldots) \in W(I)$.
Since $a_i \in I$ for all $i$, we denote $\alpha$ by a sum of an element of $I'$ and an element of $\mathrm{Im}(V^n)$.
Since $\psi$ is of rank $n$, we have $\psi(\alpha) \subseteq \psi(I')$.
Therefore, we obtain (2) by the above claim.
\end{proof}

\begin{thm}\label{thm:Fedder's type criterion for general ideals}
Let $I$ be an ideal of $R$ and $n$ a positive integer.
Then $R/I$ is $n$-quasi-$F$-split if and only if there exist $g_1,\ldots , g_n \in R$ such that
\begin{itemize}
    \item[\textup{(1)}] $g_1 \notin \m^{[p]}$, $u(F_*g_i)=0$ for $i \geq 2$, and
    \item[\textup{(2)}] for every $s\in\{1,\ldots, n\}$ and $x \in I$,
    \[
    \sum^{n-s}_{r=0}u^r(F^r_*{\cred (}g_{r+s}\Delta_r(x){\cred )}) \in I^{[p^s]}.
    \]
\end{itemize}
\end{thm}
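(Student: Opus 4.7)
The plan is to reformulate the $n$-quasi-$F$-splitting of $R/I$ as a condition on $R$-linear maps $Q_{R,n} \to R$ and then apply the structural Lemmas~\ref{lem:structure of homomorphism for n} and~\ref{lem:condition for inducing map}. Using Proposition~\ref{prop:witt-bar description} together with the freeness of $Q_{R,n}$ from Lemma~\ref{lem:basis for Witt ring} (which allows any map $Q_{R/I,n}\to R/I$ to be lifted through $R \twoheadrightarrow R/I$), I would first show that $R/I$ is $n$-quasi-$F$-split if and only if there exists an $R$-linear $\psi\colon Q_{R,n} \to R$ satisfying $\psi(F_*W_n(I)) \subseteq I$ and $\psi([1]) \equiv 1 \bmod I$.

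Next, writing $\psi = \psi_{(g_1,\ldots,g_n)}$ via Lemma~\ref{lem:structure of homomorphism for n}, the decomposition $g_i \in F^{i-1}_*(F_*R \cap v^\perp)$ for $i\ge 2$ translates directly to $u(F_*g_i) = 0$; the ideal containment becomes condition~(2) by Lemma~\ref{lem:condition for inducing map}; and the formula of Lemma~\ref{lem:structure of homomorphism for n} applied to $V^0[1]=[1]$, together with $\Delta_r(1)=0$ for $r\ge 1$ (since $[1]$ is a $p$-monomial, so Proposition~\ref{prop:rule for delta map}(i) applies), gives $\psi([1]) = u(F_*g_1)$. It then remains to match the condition ``$u(F_*g_1) \equiv 1 \bmod I$'' with the stated condition~(1), namely $g_1 \notin \m^{[p]}$. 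The forward implication is immediate: $u(F_*g_1) \equiv 1 \bmod I$ forces $u(F_*g_1) \notin \m$, and $g_1 \in \m^{[p]}$ would yield $F_*g_1 \in \m\cdot F_*R$ and hence $u(F_*g_1) \in \m$. For the converse, I would use the original Fedder criterion applied to $F_*a\mapsto u(F_*g_1 a)$ to produce $b \in R$ with $u(F_*bg_1)$ a unit, then set $\tilde g_i := b^{p^{i-1}} g_i$ and verify
\[
\sum_{r=0}^{n-s} u^r(F^r_*\tilde g_{r+s}\Delta_r(x)) = b^{p^{s-1}}\sum_{r=0}^{n-s}u^r(F^r_*g_{r+s}\Delta_r(x)) \in I^{[p^s]},
\]
via the factorization $b^{p^{r+s-1}}=(b^{p^{s-1}})^{p^r}$ combined with the $R$-linearity $u^r(F^r_*(c^{p^r}\cdot-)) = c\cdot u^r(F^r_*-)$; similarly one gets $u(F_*\tilde g_i) = b^{p^{i-2}}u(F_*g_i) = 0$ for $i\ge 2$. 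A final rescaling by the inverse of the unit $u(F_*\tilde g_1)$ produces a tuple with $u(F_*g_1')=1$, yielding the required splitting.

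The main obstacle is precisely the gap between ``$g_1 \notin \m^{[p]}$'' and ``$u(F_*g_1) \notin \m$''---the former is strictly weaker in general (e.g.\ when $g_1$ is a non-trivial power of a lift of a $p$-basis element), so the splitting cannot be read off term-by-term from an arbitrary tuple satisfying (1) and (2). The coherent scaling $\tilde g_i = b^{p^{i-1}} g_i$ is the key device that bridges this gap while respecting the layered Witt structure; the central combinatorial check is the preservation of condition~(2) under this scaling, which is exactly why the exponents of $b$ must grow like $p^{i-1}$ rather than uniformly.
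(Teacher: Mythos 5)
Your proposal is correct and essentially reproduces the paper's proof: both directions reduce to a criterion on $R$-linear maps $Q_{R,n}\to R$ via the freeness of $Q_{R,n}$ (Lemma \ref{lem:basis for Witt ring}), and then translate through the structural isomorphism of Lemma \ref{lem:structure of homomorphism for n} and the ideal-containment criterion of Lemma \ref{lem:condition for inducing map}. Your rescaling $\tilde g_i = b^{p^{i-1}}g_i$ in the converse is precisely the tuple representing $\psi_{(g_1,\ldots,g_n)}([b]\cdot\underline{\ \ })$, which is the twist the paper performs implicitly when it evaluates $\psi$ at $v_d/v_i$ and then invokes surjectivity of the evaluation map (Remark \ref{rmk:evaluation}).
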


\begin{proof}
First, we assume that $R/I$ is $n$-quasi-$F$-split.
Then the {\cred $R$-module homomorphism} $R/I \to Q_{R/I,n}$ splits.
Since $Q_{R,n}$ is a free $R$-module by Lemma \ref{lem:basis for Witt ring}, the splitting lifts to $Q_{R,n}$, that is, we obtain {\cred an $R$-module homomorphism} $\psi \colon Q_{R,n} \to R$ such that 
\[
\psi({\cred F_*}W_n(I) {\cred Q_{R,n}}) \subseteq I
\]
and $\psi(1)=1$.
By Lemma \ref{lem:structure of homomorphism for n}, there exist ${g_1,\ldots,g_n \cred \in R}$ such that $u(F_*g_i)=0$ for $i \geq 2$ and $\psi=\psi_{(g_1,\ldots,g_n)}$.
By Lemma \ref{lem:condition for inducing map}, $g_1,\ldots,g_n$ satisfy the condition $(2)$ in Theorem \ref{thm:Fedder's type criterion for general ideals}.
We note that $\psi({\cred F_* 1})=u(F_*g_1)$, since $\Delta_r(1)=0$ for $1 \leq r$.
Since $\psi(1)=1 \notin \m$, we have $g_1 \notin \m^{[p]}$.
Therefore, $g_1,\ldots,g_n$ satisfy the desired conditions.

Next, we assume that there exist $g_{1}, \ldots, g_{n}$ satisfying $(1)$ and $(2)$.
We define {\cred an $R$-module homomorphism} $\psi \colon Q_{R,n} \to R$ {\cred by} $\psi:=\psi_{(g_1,\ldots,g_n)}$.
Then $\psi$ induces {\cred an $R/I$-module homomorphism} $Q_{R/I,n} \to R/I$ by the condition (2) and Lemma \ref{lem:condition for inducing map}.
Since $g_1 \notin \m^{[p]}$, there exists $i$ such that ${\cred u_i}(F_*g_1) \notin \m$.
Thus, we have $\psi({\cred F_*( v_d/v_i)})={\cred u_i}(F_*g_1) \notin \m$, and we have the surjectivity of the evaluation map
\[
\Hom_{{\cred R/I}}(Q_{R/I,n},R/I) \to R/I,
\]
which shows $R/I$ is $n$-quasi-$F$-split.
\end{proof}

\begin{lem}\label{lem: g_n change}
Let $g_1,\ldots, g_n\in R$ satisfying $u(F_*g_i)=0$ for $i \in\{2,\ldots,n\}$.
Let $f_1,\ldots,f_m$ be a regular sequence {\cred in $\m$}, $I:=(f_1,\ldots,f_m)$, and $f:=f_1 \cdots f_m$.
We write
\[
G_{s,x}:=\sum_{r=0}^{n-s} u^r(F^r_*{\cred (}g_{r+s}\Delta_r(x){\cred )})
\]
for $x \in R$ and $s\in\{1,\ldots,n\}$.
Then the following are equivalent.
\begin{itemize}
    \item[\textup{(1)}] $G_{s,f_j} \in I^{[p^s]}$ for all $1 \leq s \leq n$ and $1 \leq j \leq m$,
    \item[\textup{(2)}] there exist $h_1,\ldots,h_n$ {\cred $ \in R$} such that $h_i':=f^{p^i-p}h_i \equiv g_i \mod I^{[p^i]}$ for $i \geq 1$, $u(F_*h_i)=0$ for $i \geq 2$,
    \[
    h_s-u(F_*{\cred (}h_{s+1}\Delta_1(f^{p-1}){\cred )}) \in (I^{[p]} \colon I)
    \]
     for all $1 \leq s \leq n-1$, and $h_n \in (I^{[p]} \colon I)$.
\end{itemize}
\end{lem}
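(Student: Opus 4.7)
We proceed by downward induction on $s$ from $s=n$ down to $s=1$, producing (or verifying) one $h_s$ at each level. The three main tools are: the delta formula $\Delta_r(f_j)\equiv f_j^{p^r-p}\Delta_1(f_j)\pmod{F(R)}$ from Theorem \ref{thm:delta formula}; the Leibniz identities $\Delta_1(f)\equiv\sum_k(f/f_k)^p\Delta_1(f_k)$ and $\Delta_1(f^{p-1})\equiv -f^{p^2-2p}\Delta_1(f)\pmod{F(R)}$ derived from Proposition \ref{prop:rule for delta map}; and the colon-ideal identities of Lemma \ref{lem:colon rule for regular sequence}. The hypothesis $u(F_*g_i)=0$ (and inductively $u(F_*h_i)=0$) for $i\geq 2$ is crucial to validate the delta-formula substitution after applying the iterated operator $u^r$.

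\emph{Base case $s=n$.} The sum $G_{n,f_j}$ reduces to $g_n f_j$, so (i) at $s=n$ reads $g_n\in(I^{[p^n]}:I)=f^{p^n-1}R+I^{[p^n]}$ by Lemma \ref{lem:colon rule for regular sequence}(i). Writing $g_n\equiv f^{p^n-p}h_n\pmod{I^{[p^n]}}$ with $h_n\in f^{p-1}R\subseteq(I^{[p]}:I)$ is immediate, and $h_n$ is unique modulo $(I^{[p^n]}:f^{p^n-p})=I^{[p]}$ by Lemma \ref{lem:colon rule for regular sequence}(ii). For $n\geq 2$, the residual condition $u(F_*h_n)=0$ is achieved by adjusting $h_n$ within this coset: $u(F_*I^{[p]})=I$ and the obstruction forced by $u(F_*g_n)=0$ also lies in $I$, so a correction exists.

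\emph{Inductive step $s<n$.} Given $h_{s+1},\ldots,h_n$, substitute $g_{r+s}\equiv f^{p^{r+s}-p}h_{r+s}\pmod{I^{[p^{r+s}]}}$ and $\Delta_r(f_j)\equiv f_j^{p^r-p}\Delta_1(f_j)\pmod{F(R)}$ into $G_{s,f_j}$; the discrepancies vanish modulo $I^{[p^s]}$ after $u^r$, the latter absorbed via $u(F_*h_{r+s})=0$. Then extract $p^r$-th-power factors from each $u^r(F^r_*f^{p^{r+s}-p}f_j^{p^r-p}h_{r+s}\Delta_1(f_j))$: the exponent of $f_j$ in the coefficient is $p^r(p^s+1)-2p=p^r\cdot p^s+(p^r-2p)$. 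For $r=1$, the extracted factor is $f^{p^s-1}$ and the term simplifies to $f^{p^s-1}u(F_*h_{s+1}\Delta_1(f_j))$; for $r\geq 2$, the extracted monomial contains $f_j^{p^s}$ and so lies in $I^{[p^s]}$, vanishing. Hence
\[
G_{s,f_j}\equiv g_sf_j+f^{p^s-p}\cdot f^{p-1}u(F_*h_{s+1}\Delta_1(f_j))\pmod{I^{[p^s]}}.
\]
Using $\Delta_1(f^{p-1})\equiv -f^{p^2-2p}\sum_k(f/f_k)^p\Delta_1(f_k)$ and noting that in $-f_j u(F_*h_{s+1}\Delta_1(f^{p-1}))$ only the $k=j$ summand survives modulo $I^{[p]}$ (the others contain $f_j^p$), this expression matches $f^{p-1}u(F_*h_{s+1}\Delta_1(f_j))$. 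Therefore (i) at level $s$ is equivalent to $f_j(g_s-f^{p^s-p}u(F_*h_{s+1}\Delta_1(f^{p-1})))\in I^{[p^s]}$ for all $j$, which by Lemma \ref{lem:colon rule for regular sequence} is equivalent to $g_s\equiv f^{p^s-p}h_s\pmod{I^{[p^s]}}$ for some $h_s$ with $h_s-u(F_*h_{s+1}\Delta_1(f^{p-1}))\in(I^{[p]}:I)$. The orthogonality $u(F_*h_s)=0$ (for $s\geq 2$) is arranged as in the base case.

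\emph{Main obstacle.} The central technical step is the uniform vanishing of the $r\geq 2$ contributions modulo $I^{[p^s]}$, which rests on the arithmetic that the exponent of $f_j$ after substitution is at least $p^r\cdot p^s$, allowing $f_j^{p^s}$ to be extracted as a factor of the $p^r$-th-power part. In characteristic $p=2$ the arithmetic is tight and requires a separate check. A secondary difficulty is propagating the orthogonality $u(F_*h_i)=0$ consistently through the induction; at each level this reduces, via $u(F_*I^{[p]})=I$, to showing the obstruction is correctable within the coset modulo $I^{[p]}$.
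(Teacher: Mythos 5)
Your argument follows essentially the same route as the paper's proof: downward induction on $s$, using the delta formula $\Delta_r(f_j)\equiv f_j^{p^r-p}\Delta_1(f_j)$, the colon-ideal identities of Lemma \ref{lem:colon rule for regular sequence}, and the key observation that the $r\geq 2$ contributions vanish modulo $I^{[p^s]}$ after applying $u^r$ (with the $F(R)$-ambiguity absorbed by $u(F_*h_{r+s})=0$). The only cosmetic difference is in the $r=1$ manipulation — you expand $\Delta_1(f^{p-1})$ term-by-term via $\sum_k(f/f_k)^p\Delta_1(f_k)$ and isolate the $k=j$ summand, while the paper reorganizes via the Leibniz identity for $\Delta_1(f_jf^{p-1})$ — but these are equivalent algebraic rearrangements.
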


\begin{proof}
We note that both (1) and (2) are not affected if we replace a sequence $g_1, \ldots ,g_n$ by $g_1',\ldots,g_n'$ satisfying $g_i \equiv g_i' \mod I^{[p^i]}$ for all $i$.
First, we prove (1)$\Rightarrow$ (2).
By taking $s=n$, we have $g_n \in (I^{[p^n]} \colon I)$, which coincides with $f^{p^n-1}+I^{[p^n]}$ by Lemma \ref{lem:colon rule for regular sequence}.
When $n=1$, it implies (2).
Therefore, we assume that $n$ is at least two.
Thus there exists $h_n$ such that $g_n \equiv f^{p^n-p}h_n \mod I^{[p^n]}$.
Then $0=u(F_*g_n) \equiv f^{p^{n-1}-1}u(F_*h_n) \mod I^{[p^{n-1}]}$.
Therefore, we have
\[
u(F_*h_n) \in (I^{[p^{n-1}]} \colon f^{p^{n-1}-1}) = I
\]
by Lemma \ref{lem:colon rule for regular sequence}.
Replacing $h_n$ by $h_n-u(F_*h_n)^pv$, then $u(F_*h_n)=0$ and
\[
g_n \equiv f^{p^{n}-p}h_n=:h_n'\mod f^{p^{n}-p} I^{[p]}.
\]
Since $f^{p^{n}-p} I^{[p]} \subseteq I^{[p^n]}$, we have $g_n \equiv  h_n' \mod I^{[p^n]}$.
Furthermore, since $h_n' \in (I^{[p^n]} \colon I)$, we have
\[
h_n \in ((I^{[p^n]} \colon I) \colon f^{p^{n}-p})=(I^{[p]} \colon I)
\]
by Lemma \ref{lem:colon rule for regular sequence}.
Therefore, we obtain $h_n$ as desired.
Next, assuming that there exist $h_{s+1},\ldots,h_n$ with the desired condition, let us find $h_s$.
The condition $(i)$ is not affected if we replace $g_i$ by $h_i'$; we may assume that $g_i=h'_i$ for $i>s$.
First, we note that $u^r(F^r_*{\cred (}g_{r+s}\Delta_{r}(f_j){\cred )}) \in I^{[p^s]}$ for $2 \leq r$ and $1 \leq j \leq m$.
Indeed, $g_{r+s} \in f^{p^{r+s}-p}R$ and $\Delta_r(f_j) \in f_j^{p^r-p}R$ by Theorem \ref{thm:delta formula}, we have $u^r(F^r_*{\cred (} g_{r+s}\Delta_{r}(f_j){\cred )}) \in f_j^{p^s}R $ if $r>1$.
Thus, by assumption, we have
\[
g_sf_j+u(F_*{\cred (}g_{s+1}\Delta_1(f_j){\cred )}) \in I^{[p^s]}.
\]
We note that $g_{s+1}/f^{p(p-1)}=f^{p^{s+1}-p^2}h_{s+1}$.
Next, we prove 
\[
u(F_*{\cred (}f^{p^{s+1}-p^2}h_{s+1}\Delta_1(f^{p-1}f_j){\cred )}) \in I^{[p^s]}.
\]
Since $f^{p-1}f_j \in f_j^{p}R$, we have $\Delta_1(f^{p-1}f_j) \in f_j^{p^2}+F(R)$.
Then, since 
\begin{align*}
    &f^{p^{s+1}-p^2}h_{s+1} \in f^{p^{s+1}-p^2}R \subseteq f_j^{p^{s+1}-p^2}R, 
\end{align*}
we have
\[
f^{p^{s+1}-p^2}h_{s+1}\Delta_1(f^{p-1}f_j) \in f_j^{p^{s+1}}+h_{s+1}F(R),
\]
and thus its image by $u$ is contained in $I^{[p^s]}$.
Since 
\[
\Delta_1(f_jf^{p-1})\equiv f_j^p\Delta_1(f^{p-1})+f^{p(p-1)}\Delta_1(f_j) \mod F(R),
\]
we have
\[
g_sf_j-u(F_*{\cred (}f^{p^{s+1}-p^2}h_{s+1}\Delta_1(f^{p-1}){\cred )})f_j \in I^{[p^s]}
\]
for all $j\in \{1,\ldots,m\}$.
Thus, we have
\[
g_s-f^{p^s-p}u(F_*{\cred (}h_{s+1} \Delta_1(f^{p-1}){\cred )}) \in (I^{[p^s]} \colon I)=f^{p^s-1}R+I^{[p^s]},
\]
and $g_s \in f^{p^s-p}+I^{[p^s]}$.
By an argument similar too above, we can find $h_s$ such that $h_s':=f^{p^s-p}h_s \equiv g_s$ mod $I^{[p^s]}$ and $u(F_*h_s)=0$ if $s \geq 2$.
Then we have
\[
f^{p^s-p}(h_s-u(F_* {\cred (}h_{s+1}\Delta_1(f^{p-1}){\cred )})) \in (I^{[p^s]} \colon I),
\]
and thus 
\[
h_s-u(F_* {\cred (}h_{s+1}\Delta_1(f^{p-1}){\cred )}) \in (I^{[p]} \colon I)
\]
by Lemma \ref{lem:colon rule for regular sequence}.
Therefore, we found a sequence $h_1,\ldots,h_n$ with the desired conditions.

Next, we prove the converse direction.
Since $g_i \equiv h'_i \mod I^{[p^i]}$, we may replace $g_i$ with $h'_i$.
In this case, by an argument similar to the above, we have
\[
f^{p^{s+r-1}-p}u(F_*{\cred (} h_{s+r}\Delta_1(f^{p-1}f_j){\cred )}) \in I^{[p^s]}, \textup{ and }
\]
\[
u^r(F^r_*{\cred (}g_{r+s}\Delta_r(f_j){\cred )}) \in I^{[p^s]}.
\]
for all $ s\in\{1,\ldots,,n\}$ and $ 2 \geq r$, 
Therefore, we have
\[
f^{p^s-p}f_j(h_s -u(F_*{\cred (}h_{s+1}\Delta_1(f^{p-1}){\cred )})) \equiv G_{s,f_j} \mod I^{[p^s]}.
\]
Since the left-hand side is contained in $I^{[p^s]}$ by the assumption, we have $G_{s,f_j} \in I^{[p^s]}$, as required.
\end{proof}

\begin{lem}\label{lem:pre-Fedder's criterion}
Let $f_1,\ldots,f_m$ be a regular sequence {\cred in $\m$}, $I:=(f_1,\ldots,f_m)$, $f:=f_1 \cdots f_m$, and $n$ a positive integer.
Suppose that $f^{p-1} \in m^{[p]}$.
Then $R/I$ is $n$-quasi-$F$-split if and only if there exists $h_2,\ldots,h_n$ {\cred $\in R$} such that
\begin{itemize}
    \item[\textup{(1)}] $u(F_*h_i)=0$ for $i \geq 2$,
    \item[\textup{(2)}] $h_s-u(F_*{\cred (}h_{s+1}\Delta_1(f^{p-1}){\cred )}) \in (I^{[p]} \colon I)$ for $2 \leq s \leq n-1$, $h_n \in (I^{[p]} \colon I)$, and
    \item[\textup{(3)}] $u(F_*{\cred (}h_2\Delta_1(f^{p-1}){\cred )}) \notin \m^{[p]}$.
\end{itemize}
\end{lem}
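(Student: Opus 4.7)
The plan is to combine Theorem~\ref{thm:Fedder's type criterion for general ideals} (the general Fedder-type criterion) with Lemma~\ref{lem: g_n change} (the rewriting of the witnesses $g_i$ in terms of $h_i$), and then use the hypothesis $f^{p-1} \in \m^{[p]}$ to collapse the remaining freedom in $h_1$. By Theorem~\ref{thm:Fedder's type criterion for general ideals} combined with Lemma~\ref{lem:condition for inducing map}, $R/I$ is $n$-quasi-$F$-split if and only if there exist $g_1,\ldots,g_n \in R$ with $g_1 \notin \m^{[p]}$, $u(F_*g_i)=0$ for $i \geq 2$, and $G_{s,f_j} \in I^{[p^s]}$ for all $1 \leq s \leq n$ and $1 \leq j \leq m$. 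By Lemma~\ref{lem: g_n change}, this last condition is equivalent to the existence of $h_1,\ldots,h_n$ such that $f^{p^i - p} h_i \equiv g_i \mod I^{[p^i]}$ for all $i \geq 1$, $u(F_*h_i)=0$ for $i \geq 2$, $h_s - u(F_*h_{s+1}\Delta_1(f^{p-1})) \in (I^{[p]}:I)$ for $1 \leq s \leq n-1$, and $h_n \in (I^{[p]}:I)$.

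The key numerical observation is that $p^1 - p = 0$, so $h_1' = h_1$ and hence $g_1 \equiv h_1 \mod I^{[p]}$. Combining this with the $s=1$ case of the chain condition and the identity $(I^{[p]}:I) = f^{p-1}R + I^{[p]}$ from Lemma~\ref{lem:colon rule for regular sequence}(i), we obtain
\[
g_1 \equiv u(F_*h_2\Delta_1(f^{p-1})) \mod (f^{p-1}R + I^{[p]}).
\]
The hypothesis $f^{p-1} \in \m^{[p]}$, together with the obvious inclusion $I^{[p]} \subseteq \m^{[p]}$, forces $f^{p-1}R + I^{[p]} \subseteq \m^{[p]}$. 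Therefore modulo $\m^{[p]}$ we have $g_1 \equiv u(F_*h_2\Delta_1(f^{p-1}))$, so the condition $g_1 \notin \m^{[p]}$ coming from Theorem~\ref{thm:Fedder's type criterion for general ideals} becomes exactly condition (3) of the lemma.

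For the forward direction, given $g_1,\ldots,g_n$ from Theorem~\ref{thm:Fedder's type criterion for general ideals} we extract $h_1,\ldots,h_n$ via Lemma~\ref{lem: g_n change}; then $h_2,\ldots,h_n$ automatically satisfy (1), the chain relation for $s \in \{2,\ldots,n-1\}$ and $h_n \in (I^{[p]}:I)$ give (2), and condition (3) follows from the displayed equivalence. For the reverse direction, given $h_2,\ldots,h_n$ satisfying (1)--(3), we put $h_1 := u(F_*h_2\Delta_1(f^{p-1}))$ (which trivially satisfies the missing $s=1$ chain relation) and set $g_i := f^{p^i-p}h_i$; the computation $u(F_*g_i) = f^{p^{i-1}-1}u(F_*h_i) = 0$ for $i \geq 2$, together with (3) and a second application of Lemma~\ref{lem: g_n change} and Theorem~\ref{thm:Fedder's type criterion for general ideals}, yields $n$-quasi-$F$-splitness.

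The main subtlety is bookkeeping the auxiliary witnesses $h_i$: the chain relation on $h_s$ for $1 \leq s \leq n-1$ provided by Lemma~\ref{lem: g_n change} contains the $s=1$ case, which in the present lemma is absorbed into the definition $h_1 := u(F_*h_2\Delta_1(f^{p-1}))$ and is traded for condition (3). The hypothesis $f^{p-1} \in \m^{[p]}$ is precisely what makes this substitution clean, since it guarantees that all coset representatives of $h_1$ modulo $(I^{[p]}:I)$ agree modulo $\m^{[p]}$, so that the non-containment $g_1 \notin \m^{[p]}$ depends only on $h_2$.
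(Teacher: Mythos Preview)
Your proof is correct and follows essentially the same route as the paper's: both directions are obtained by combining Theorem~\ref{thm:Fedder's type criterion for general ideals} with Lemma~\ref{lem: g_n change}, then using $(I^{[p]}:I)=f^{p-1}R+I^{[p]}\subseteq \m^{[p]}$ to trade the $s=1$ chain relation and the condition $g_1\notin\m^{[p]}$ for condition~(3). If anything you are slightly more explicit than the paper in verifying $u(F_*g_i)=f^{p^{i-1}-1}u(F_*h_i)=0$ in the converse direction, which the paper leaves implicit.
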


\begin{proof}
First, we assume that $R/I$ is $n$-quasi-$F$-split.
Then there exist $g_1,\ldots,g_n\in R$ satisfying the conditions (1) and (2) in Theorem \ref{thm:Fedder's type criterion for general ideals}.
By Lemma \ref{lem: g_n change}, there exist $h_1, \ldots,h_n$ satisfying the condition (2) in Lemma \ref{lem: g_n change}. 
Thus $h_1,\ldots,h_n$ satisfy the conditions (1) and (2)in this lemma.
Since $h_1 \equiv g_1 \mod I^{[p]}$ and $g_1\notin\m^{[p]}$, it follows that $h_1\notin\m^{[p]}$.
By Lemma \ref{lem: g_n change}, we have
\[
h_1-u(F_*{\cred (}h_2\Delta_1(f^{p-1}){\cred )}) \in (I^{[p]} \colon I)=f^{p-1}R+I^{[p]}.
\]
Since $f^{p-1} \in \m^{[p]}$, we have 
\[
u(F_*{\cred (}h_2\Delta_1(f^{p-1}){\cred )}) \notin \m^{[p]},
\]
and we obtain (3).

Next, we assume the conditions (1)--(3) hold.
We define $h_1$ as
\[
h_1:=u(F_*{\cred (}h_2\Delta_1(f^{p-1}){\cred )}).
\]
Then $h_1,\ldots,h_n$ satisfy the condition (2) in Lemma \ref{lem: g_n change}.

We set $g_i:=f^{p^i-p}h_i$. Then $g_1,\ldots,g_n$ satisfy the condition (1) in Lemma \ref{lem: g_n change}.
Therefore, they satisfy both conditions in Theorem \ref{thm:Fedder's type criterion for general ideals}.
Thus, we have $R/I$ is $n$-quasi-$F$-split.
\end{proof}

\begin{thm}[A Fedder type criterion for quasi-$F$-splitting]\label{thm:Fedder's criterion}
We use the notation in Convention \ref{conv:regular local ring}.
Let $f_1,\ldots,f_m$ be a regular sequence in $\m$, $I:=(f_1,\ldots,f_m)$, and $f:=f_1 \cdots f_m$.
We define an $R$-module homomorphism $\theta$ by
\[
\theta \colon{\cred v^\perp} \to R \ ;\ F_*a \mapsto u(F_*{\cred (}\Delta_1(f^{p-1})a{\cred )}).
\]
We define a sequence of ideal $\{I_s\}_{s>0}$ by $I_1:=(I^{[p]} \colon I)$ and 
\[
I_{s+1}:=\theta(F_*I_s \cap v^{\perp})+I_1.
\]
Then we have
\[
\sht(R/I)=\mathrm{inf}\{n \mid I_n \nsubseteq \m^{[p]} \}.
\]
\end{thm}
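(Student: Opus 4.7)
My approach will be to split on whether $f^{p-1} \in \m^{[p]}$, reducing the statement to a careful re-expression of Lemma \ref{lem:pre-Fedder's criterion} in terms of the ideals $I_n$. First I will dispose of the case $f^{p-1} \notin \m^{[p]}$: here Lemma \ref{lem:colon rule for regular sequence}(i) gives $I_1 = (I^{[p]} \colon I) = f^{p-1}R + I^{[p]}$, which is not contained in $\m^{[p]}$ since $f^{p-1} \notin \m^{[p]}$ while $I^{[p]} \subseteq \m^{[p]}$, so the right-hand side equals $1$; and the choice $g_1 := f^{p-1}$ verifies the conditions of Theorem \ref{thm:Fedder's type criterion for general ideals} with $n=1$ (using that $f^{p-1} f_j \in I^{[p]}$ for each $j$), giving $\sht(R/I) = 1$ as well.

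For the remainder I will assume $f^{p-1} \in \m^{[p]}$. Then $I_1 \subseteq \m^{[p]}$, so both sides are at least $2$, and it suffices to prove, for each $n \geq 2$, that $\sht(R/I) \leq n$ if and only if $I_n \nsubseteq \m^{[p]}$. By Lemma \ref{lem:pre-Fedder's criterion}, the left-hand condition is equivalent to the existence of elements $h_2, \ldots, h_n \in R$ such that $u(F_*h_i) = 0$ for $i \geq 2$, $h_s - \theta(F_*h_{s+1}) \in I_1$ for $2 \leq s \leq n-1$, $h_n \in I_1$, and $\theta(F_*h_2) \notin \m^{[p]}$.

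For the forward direction, I will argue by descending induction on $s$ that $h_s \in I_{n-s+1}$: the base case is $h_n \in I_1$, and if $h_{s+1} \in I_{n-s}$ satisfies $u(F_*h_{s+1}) = 0$, then $\theta(F_*h_{s+1}) \in \theta(F_*I_{n-s} \cap v^\perp) \subseteq I_{n-s+1}$, so $h_s \in \theta(F_*h_{s+1}) + I_1 \subseteq I_{n-s+1}$. In particular $h_2 \in I_{n-1}$, whence $\theta(F_*h_2) \in I_n$; combined with $\theta(F_*h_2) \notin \m^{[p]}$, this yields $I_n \nsubseteq \m^{[p]}$. For the converse, if $I_n \nsubseteq \m^{[p]}$ then, using $I_1 \subseteq \m^{[p]}$ and the definition $I_n = \theta(F_*I_{n-1} \cap v^\perp) + I_1$, I can find $h_2 \in I_{n-1}$ with $u(F_*h_2) = 0$ and $\theta(F_*h_2) \notin \m^{[p]}$. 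Iteratively applying the same decomposition to $h_s \in I_{n-s+1}$ produces $h_{s+1} \in I_{n-s}$ with $u(F_*h_{s+1}) = 0$ and $h_s - \theta(F_*h_{s+1}) \in I_1$; after $n-2$ further steps I obtain the full sequence $h_2, \ldots, h_n$ required by Lemma \ref{lem:pre-Fedder's criterion}, so $\sht(R/I) \leq n$.

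The main obstacle I anticipate is the clean correspondence between the conditions of Lemma \ref{lem:pre-Fedder's criterion} and the recursive construction of $\{I_n\}$; in particular one must carry the annihilation condition $F_*h_s \in v^\perp$ through every step of the induction so that $\theta(F_*h_s)$ genuinely lies in the next ideal. All the substantive technical work — constructing the splittings $\sigma_n$ of $V^{n-1}$, proving the delta formula, and reducing the splitting condition to the $h_i$-form — has already been encapsulated in the preceding lemmas, so this final step is essentially a change of language.
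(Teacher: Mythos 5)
Your proof is correct and follows essentially the same route as the paper's: dispose of the $F$-split case via the colon-ideal description of $I_1$, then, assuming $f^{p-1}\in\m^{[p]}$, translate the existence of the sequence $h_2,\ldots,h_n$ in Lemma~\ref{lem:pre-Fedder's criterion} into the chain of inclusions $h_s\in I_{n-s+1}$ (the paper indexes this as $h_{n-s+1}\in I_s$ and runs the induction the other way, but the content is identical). The only cosmetic difference is that you spell out the iterative extraction of the $h_i$ from the recursive definition of $I_n$ in the converse direction, whereas the paper asserts it directly; both are the same argument.
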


\begin{proof}
By Fedder's criterion (\cite{Fedder}), the quasi-$F$-split height is one if and only if $f^{p-1} \notin \m^{[p]}$, and they are also equivalent to saying $I_1 \nsubseteq I^{[p]}$ by Lemma \ref{lem:colon rule for regular sequence}.
Thus, we may assume that $f^{p-1} \in \m^{[p]}$, or equivalently that $\sht(R/I) \geq 2$.
First, we assume that $I_n \nsubseteq \m^{[p]}$.
Then there exists a sequence $h_2,\ldots,h_n$ such that $u(F_*h_i)=0$, $h_s-\theta(F_*h_{s+1}) \in (I^{[p]} \colon I)$, $h_n \in I_1$,
for $2 \leq s \leq n-1$, and $\theta(F_*h_2) \notin \m^{[p]}$.
Therefore, the sequence $h_2,\ldots,h_n$ satisfies the conditions (1), (2), and (3) in Lemma \ref{lem:pre-Fedder's criterion}, and thus $\sht(R/I)\leq n$.

Next, we assume that $\sht(R/I)\leq n$.
Then we can find $h_2,\ldots,h_n$ satisfying the conditions (1), (2), and (3) in Lemma \ref{lem:pre-Fedder's criterion}.
We prove that $h_{n-s+1} \in I_s$ for $1 \leq s \leq n-1$ by induction on $s$.
If $s=1$, then $F_*h_n \in F_*I_1 \cap v^\perp$.
We assume $h_{n-i+1} \in I_i$ for $i<s \leq n-1$.
By the condition (1) in Lemma \ref{lem:pre-Fedder's criterion} and $n-s+2 \geq 2$, we have $u(F_*h_{n-s+2})=0$, and thus $F_*h_{n-s+2} \in F_*I_{s-1} \cap v^{\perp}$.
By the condition (2) in Lemma \ref{lem:pre-Fedder's criterion}, we have
\[
h_{n-s+1}-\theta(F_*h_{n-s+2}) \in (I^{[p]} \colon I),
\]
and hence we have $h_{n-s+1} \in I_s$.
Thus, we obtain $F_*h_2 \in F_*I_{n-1} \cap v^\perp$, and $\theta(F_*h_2) \in I_n$.
By the condition (3) in Lemma \ref{lem:pre-Fedder's criterion}, we have $\theta(F_*h_2) \notin \m^{[p]}$.
Therefore, we obtain $I_n \nsubseteq \m^{[p]}$.
\end{proof}

\begin{rmk}\label{rmk:increasing remark}
The sequence of ideals $\{I_n\}_n$ in Theorem \ref{thm:Fedder's criterion} is an increasing sequence.
Indeed, we have $I_1 \subseteq I_2$ by definition, and
if $I_{n-1} \subseteq I_n$, then
\[
I_n=\theta(F_{*}I_{n-1} \cap v^\perp)+I_1 \subseteq \theta(F_{*}I_n \cap v^\perp)+I_1 =I_{n+1}.
\]
\end{rmk}

\begin{eg}\label{eg:easy example}
Let $k$ be a perfect field of characteristic {\cred $p=2$}.
Let $R:=k[[x,y,z]]$ and
\[
f:=x^3+y^3+z^3.
\]
We prove $\sht(R/f)=2$.
We take a basis of $F_*R$ over $R$ as
\[
\{ F_*{\cred (}x^iy^jz^l{\cred )} \mid 0 \leq i, j, l \leq p-1 \}.
\]
{\cred We apply Theorem \ref{thm:Fedder's criterion} for $m=1$ and $f_1 =f$. Then $I = (f).$}
We define $\Delta_1$, $\theta$ and $\{I_n\}_n$ as in Theorem \ref{thm:Fedder's criterion} via this basis.
Since $f^{p-1}=f \in \m^{[2]}$, it follows that $I_1$ {\cred $=((f)^2:(f)) =(f)$}
$\subseteq \m^{[2]}$, and thus $\sht(R/f) \geq 2$.
We next compute $\Delta_1(f)$.
Since the decomposition $f=x^3+y^3+z^3$ is a $p$-monomial decomposition, 
\[
x^3y^3+y^3z^3+x^3z^3.
\]
{\cred is a representative of $\Delta_1 (f)$}
by Remark \ref{eg:explicit delta}.
By abuse of notation, we also denote this representative by $\Delta_1(f)$.
Therefore, we have
\[
f \Delta_1(f) \equiv x^3y^3z^3 \mod \m^{[4]}.
\]
As $u(F_*f)=0$ and
\[
\theta(F_*f)=u(F_*{\cred (}f\Delta_1(f){\cred )}) \equiv xyz \mod \m^{[2]},
\]
we have $\theta(F_*f)\in$
{\cred $\theta(F_* I_1 \cap v^{\perp})\subseteq$}
$I_2$ and $\theta(F_*f) \notin \m^{[2]}$, and in particular, $I_2 \nsubseteq \m^{[p]}$.
By Theorem \ref{thm:Fedder's criterion}, we have $\sht(R/f)=2$.
\end{eg}

\begin{eg}
Let $k$ be a perfect field of characteristic {\cred $p=2$}.
Let $R:=k[[x,y,z]]$ and
\[
f:=x^2+y^3+z^5.
\]
We prove $\sht(R/f)=4$.
We take a basis of $F_*R$ over $R$ as in Example \ref{eg:easy example} and define $\Delta_1$, $\theta$ and $\{I_n\}_n$ as in Theorem \ref{thm:Fedder's criterion} via this basis.
First, we {\cred prove} that $\sht(R/f) \leq 4$.
It is enough to show that $I_4 \nsubseteq \m^{[p]}$.
By {\cred Remark \ref{eg:explicit delta},} an element
\[
x^2y^3+x^2z^5+y^3z^5
\]
is a representative of $\Delta_1(f)$.
We can define $\theta \colon F_*R \to R$ by using $\Delta_1(f)$.
Next, we have
\[
f\Delta_1(f)=
{\cred x^4y^3 + x^4z^5 + x^2y^3z^5+x^2y^6+y^6z^5+x^2z^{10}+y^3z^{10}}
\]

Thus, we have the basis expansion
\[
\theta=xyv_{xz}^*+xz^2v^*_{xy}+yz^2v^*_{x}
\]
and 
\[
\theta(F_*f \cdot \underline{\ \ })=
{\cred (x^2y+yz^5) v_{xz}^{\ast} + (x^2z^2+y^3z^2)v_{xy}^*+xyz^2v_x^*+(xy^3+xz^5)v_{xyz}^*},
\]
where $v^*_{xy}$ is the dual of $F_*xy$ with respect to the basis and the others are defined in a similar way.
We put $g_1:=xf \in I_1$. Then 
\[
u(F_*g_1)=v^*_{yz}(F_*f)=0\ \text{and}\  \theta(F_*g_1)=\theta(F_*{\cred (}xf{\cred )})=xyz^2,
\]
and $g_2:=xyz^2 \in I_2$.
Since $u(F_*g_2)=0$ and $\theta(F_*g_2)=xz^3$, 
we have $g_3:=xz^3 \in I_3$.
Moreover, since $u(F_*g_3)=0$ and $\theta(F_*g_3)=xyz$,
we have $xyz \in I_4$.
We then deduce from $xyz\notin \m^{[2]}$ that $I_4$ is not contained in $\m^{[2]}$.
Therefore, we have $\sht(R/f) \leq 4$.

Next, we prove that $\sht(R/f)=4$.
It is enough to show that $I_3\subset\m^{[2]}$.
We use the representative $x^2y^3+x^2z^5+y^3z^5$ of $\Delta_1(f)$.
By the definition of $I_3$, we have $I_3 \subseteq \theta^2(F^2_*fR)+\theta(F_*fR)+fR$.
We note $f \in \m^{[2]}$.
We also have
\[
\theta(F_*fR)=(x^2y+xy^2, x^2z^2+y^3z^2, xyz^2,xz^5) \subseteq \m^{[2]}
\]
by the basis expansion of $\theta(F_*f \cdot \underline{\ \ })$, and obtain $\theta^2(F^2_*{\cred (}fR{\cred )})\subseteq\m^{[2]}$ by the basis expansion of $\theta$.
Therefore, we conclude that $I_3\subset\m^{[2]}$. 
\end{eg}

\begin{cor}\label{cor:complete intersection versus hypersurface}
{\cred We use the notation in Convention \ref{conv:regular local ring}.}
Let $f_1,\ldots,f_m$ be a regular sequence {\cred in $\m$}, $I\coloneqq(f_1,\ldots,f_m)$, and $f\coloneqq f_1 \cdots f_m$.
Then $\sht({\cred R}/I) \leq \sht({\cred R}/f)$.
\end{cor}

\begin{proof}
We define an $R$-module homomorphism $\theta$ by
\[
\theta \colon{\cred v^\perp} \to R \ ;\ F_*a \mapsto u(F_*{\cred (}\Delta_1(f^{p-1})a{\cred )}).
\]
We define a sequence of ideals $\{I_n\}_n$ and $\{I'_n\}_n$ by
\begin{align*}
    &I_1:=(I^{[p]} \colon I),\ I'_1:=(f^{p-1}) \\
    & I_{n+1}:=\theta(F_*I_n \cap v^\perp)+I_1 \\
    & I'_{n+1}:=\theta(F_*I'_n \cap v^\perp)+I'_1,
\end{align*}
inductively.
Then $I'_n\subseteq I_n$ for all $n$ by definition, and we obtain the assertion by Theorem \ref{thm:Fedder's criterion}.
\end{proof}

\begin{rmk}
By Corollary \ref{cor:complete intersection versus hypersurface}, if $\sht({\cred R}/f)=1$, then so is $\sht({\cred R}/I)$.
Moreover, the converse also holds since $I^{[p]} \subseteq \m^{[p]}$ (cf.~\cite[Proposition 2.1]{Fedder}).
On the other hand, the equality $\sht({\cred R}/I) =\sht({\cred R}/f)$ does not hold in general (see Example \ref{eg:height infty}).
However, we will see that the equality holds for `Calabi-Yau varieties' in Theorem \ref{thm:Fedder's criterion for Calabi-Yau}. 
\end{rmk}

\begin{cor}\label{cor:Fedder's criterion for quasi-F-splitting}
{\cred We use the notation in Convention \ref{conv:regular local ring}.}
Let $f_1,\ldots,f_m$ be a regular sequence {\cred in $\m$}, $I:=(f_1,\ldots,f_m)$, and $f:=f_1 \cdots f_m$.
We define an $R$-module homomorphism $\theta$ by
\[
\theta \colon {\cred v^\perp} \to R\ ;\ F_*a \mapsto u(F_*{\cred (}\Delta_1(f^{p-1}){\cred a}{\cred )}).
\]
Then the following hold.
\begin{itemize}
    \item[\textup{(1)}] There exists the {\cred smallest} ideal $I_{\infty}$ satisfying
    \[
    I_{\infty} \supseteq \theta(F_{*}I_{\infty} \cap v_d^\perp )+(I^{[p]} \colon I).
    \]
    \item[\textup{(2)}] $R/I$ is quasi-$F$-split if and only if $I_{\infty} \nsubseteq \m^{[p]}$.
\end{itemize}
\end{cor}

\begin{proof}
By Remark \ref{rmk:increasing remark} and the Noetherian property of $R$, there exists an ideal $I_{\infty}$ such that $I_{\infty}=I_n$ for large enough $n$.
Therefore, $I_{\infty}$ satisfies 
\[
I_{\infty} = \theta(F_{*}I_{\infty} \cap v^\perp )+(I^{[p]} \colon I).
\]
Next, we take an ideal $J\subseteq R$ satisfying 
\[
J \supseteq \theta(F_{*}J \cap v^\perp )+(I^{[p]} \colon I).
\]
Then $J$ contains $(I^{[p]}\colon I)=I_1$.
If $I_s$ is contained in $J$, then we have
\[
I_{s+1}=\theta(F_*I_s \cap v^\perp)+I_1 \subseteq \theta(F_*J \cap v^\perp)+I_1 \subseteq J.
\]
Therefore, we have $I_n \subseteq J$ for all $n$ by induction, and in particular, $I_{\infty}$ is contained in $J$.
Thus, we obtain (1).

By the construction of $I_{\infty}$ and Theorem \ref{thm:Fedder's criterion}, we obtain (2).
\end{proof}

\begin{eg}\label{eg:height infty}
Let $k$ be a perfect field of characteristic {\cred $p=2$}.
Let $R:=k[[x,y,z,w]]$ and $g:=x^3+y^3+z^3+xyzw^2$.
First, we show that $R/g$ is not quasi-$F$-split.
We take a basis of $F_*R$ over $R$ as in Example \ref{eg:easy example}, and define $\Delta_1$, $\theta$, and $\{I_n\}_n$ as in Theorem \ref{thm:Fedder's criterion} via this basis, {\cred then we have
\[
\Delta_1(g)=x^3y^3+x^3z^3+y^3z^3+xyzw^2(x^3+y^3+z^3).
\]
We define an ideal $J$ by
\[
J:=(xy,yz,xz,x^4w,y^4w,z^4w) \cap (x^2,y^2,z^2)+(g),
\]
then we have
\[
J \supseteq \theta(F_*J \cap v^\perp)+(g).
\]
We give a proof of the inclusion.
Take \( a \in J \) such that \( u(F_*a) = 0 \).  
Then there exist \( b_1, b_2, b_3, c \in R \) such that
\[
a = b_1x^2 + b_2y^2 + b_3z^2 + cg
\]
and
\[
b_1x^2 + b_2 y^2 + b_3 z^2 \in J_1 := (xy, yz, xz, x^4w, y^4w, z^4w).
\]
Since
\[
b_1x^2  \in J_1 + (y, z) \subseteq (x^4w, y, z)
\]
and \( x, y, z \) is a regular sequence in $\m$, it follows that \( b_1 \in (x^2, y, z) \).  
By the same argument, we obtain \( b_2 \in (x, y^2, z) \) and \( b_3 \in (x, y, z^2) \).  
In particular, we have
\[
a - cg \in J' := (x^4, x^2y, x^2z, xy^2, y^4, y^2z, xz^2, yz^2, z^4).
\]
Furthermore, by direct computation, we can easily show that \( J'\Delta_1(g) \subseteq J^{[2]} \).
Therefore, we obtain
\[
\theta(F_*(a - cg)) \in u(F_*(J'\Delta_1(g))) \subseteq u(F_*J^{[2]}) \subseteq J.
\]
Therefore, it remains to show that \( cg\Delta_1(g) \in J^{[2]} \).  
We compute
\[
u(F_*a) = u(F_*b_1)x + u(F_*b_2)y + u(F_*b_3)z + u(F_*(cg)) = 0,
\]
which implies \( u(F_*(cg)) \equiv w \cdot u(F_*(cxyz)) \in (x, y, z) \).  
Since \( x, y, z, w \) form a regular sequence in $\m$, we deduce that \( c \in (x, y, z) \).  
Moreover, as
\[
g\Delta_1(g) \equiv x^3y^3z^3 \mod J^{[2]},
\]
we conclude that \( cg\Delta_1(g) \in J^{[2]} \), as desired.}
Therefore, by Corollary \ref{cor:Fedder's criterion for quasi-F-splitting}, we have
$I_\infty \subseteq J \subseteq \m^{[2]}$,
and $R/g$ is not quasi-$F$-split.

Next, we confirm that $R/(gw)$ is not quasi-$F$-split.
We denote an $R$-module homomorphism ${\cred v^\perp} \to R\ ;\ F_*a \mapsto u(F_*{\cred (} \Delta_1(gw)a{\cred )})$ by $\theta'$.
Since we have $\Delta_1(gw)=w^2\Delta_1(g)$ {\cred by Proposition \ref{prop:rule for delta map} (1) and (4)}, we obtain
\[
\theta'(F_*a)=w\theta(F_*a)
\]
for all $a \in {\cred v^\perp}$.
Therefore, we have
\[
 \theta'(F_*J \cap v^\perp)+(g) \subseteq \theta(F_*J \cap v^\perp)+(g) \subseteq J, 
\]
and $R/(gw)$ is not quasi-$F$-split.

Moreover, since $R/(g,w) \cong k[[x,y,x]]/(x^3+y^3+z^3)$, it follows that $\sht(R/(g,w))=2$.
Therefore, in Corollary \ref{cor:complete intersection versus hypersurface}, $\sht(R/I)$ can be strictly less than $\sht(R/f)$.
\end{eg}

\begin{cor}\label{cor:non-q-split criterion}
{\cred We use the notation in Convention \ref{conv:regular local ring}.}
Let $f_1,\ldots,f_m$ be a regular sequence {\cred in $\m$}, $I\coloneqq(f_1,\ldots,f_m)$, and $f\coloneqq f_1 \cdots f_m$.
Then the following hold.
\begin{itemize}
    \item[\textup{(1)}] If $f^{p-2} \in \m^{[p]}$, then $\sht(R/I)=\infty$.
    \item[\textup{(2)}] If ${\cred (f^{p-2},I^{[p]})f^{p(p-2)}\widetilde{\Delta_{1}(f)} \subseteq \m^{[p^2]}}$ for some representative ${\cred \widetilde{\Delta_1(f)} \in R}$ 
    {\cred of $\Delta_1(f) \in R/F(R)$} 
    and $f^{p-1} \in \m^{[p]}$, then $\sht(R/I)=\infty$. 
\end{itemize}
\end{cor}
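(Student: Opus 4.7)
The plan is to deduce both parts from a single structural identity: for every $a \in R$ with $F_*a \in v^\perp = \ker u$, one has
\[
\theta(F_*a) \;=\; -\,f^{p-2}\, u\bigl(F_*\Delta_1(f)\, a\bigr).
\]
To prove this I would iterate the Leibniz rule of Proposition \ref{prop:rule for delta map}(iii) to get $\Delta_1(f^{p-1}) \equiv (p-1)\, f^{p(p-2)} \Delta_1(f) \pmod{F(R)}$, then rewrite $f^{p(p-2)} = (f^{p-2})^p$ and pull $f^{p-2}$ outside $u$ via the $R$-linearity $u(F_*c^p b) = c\, u(F_*b)$. The ambiguity in the chosen representative of $\Delta_1(f)$ is harmless on $v^\perp$, since any correction $b^p$ contributes $u(F_*b^p a) = b\, u(F_*a) = 0$.

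This identity immediately gives $\theta(F_*R \cap v^\perp) \subseteq f^{p-2}R \subseteq (f^{p-2}, I^{[p]})$. Since $I_1 = f\cdot f^{p-2}R + I^{[p]} \subseteq (f^{p-2}, I^{[p]})$ by Lemma \ref{lem:colon rule for regular sequence}(i), a one-line induction on the recursion $I_{n+1} = \theta(F_*I_n \cap v^\perp) + I_1$ shows $I_n \subseteq (f^{p-2}, I^{[p]})$ for all $n$, and hence $I_\infty \subseteq (f^{p-2}, I^{[p]})$ by Corollary \ref{cor:Fedder's criterion for quasi-F-splitting}. For part (i), the hypothesis $f^{p-2} \in \m^{[p]}$ forces $(f^{p-2}, I^{[p]}) \subseteq \m^{[p]}$, hence $I_\infty \subseteq \m^{[p]}$, and Theorem \ref{thm:Fedder's criterion} concludes $\sht(R/I) = \infty$.

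For part (ii), I would feed the inclusion $I_\infty \subseteq (f^{p-2}, I^{[p]})$ into the hypothesis to obtain $a\, f^{p(p-2)}\Delta_1(f) \in \m^{[p^2]}$ for every $a \in I_\infty$. Combined with the elementary observation $u(F_*\m^{[p^2]}) \subseteq \m^{[p]}$ (a direct computation using $u(F_*x_i^{p^2} r) = x_i^p u(F_*r)$), this yields $\theta(F_*a) = -u(F_* f^{p(p-2)}\Delta_1(f) a) \in \m^{[p]}$ for every $a \in I_\infty$ with $F_*a \in v^\perp$. Plugging this into the fixed-point equation $I_\infty = \theta(F_*I_\infty \cap v^\perp) + I_1$ of Corollary \ref{cor:Fedder's criterion for quasi-F-splitting}, and using $I_1 \subseteq \m^{[p]}$ (which follows from $f^{p-1} \in \m^{[p]}$), we conclude $I_\infty \subseteq \m^{[p]}$ and again $\sht(R/I) = \infty$.

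The only real content is the key identity; once it is in hand, the two statements reduce to bookkeeping with the ascending chain $\{I_n\}_n$ and its limit $I_\infty$. The identity itself looks delicate because $\Delta_1$ is only defined modulo $F(R)$, but this is precisely why restricting to $v^\perp$ is the right move, and why the factor $f^{p(p-2)}$ can be canonically peeled off as $f^{p-2}$ outside of $u$.
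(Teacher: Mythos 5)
Your proposal is correct and takes essentially the same approach as the paper. The paper also starts from the congruence $\Delta_1(f^{p-1}) \equiv -f^{p(p-2)}\Delta_1(f) \bmod F(R)$ (your ``key identity'' is just the clean restatement of this after pulling $f^{p-2}$ outside $u$), deduces $I_n \subseteq (f^{p-2}) + I^{[p]}$ for all $n$, concludes (i) immediately, and for (ii) shows $\theta(F_*I_{n-1}\cap v^\perp) \subseteq u(F_*\m^{[p^2]}) \subseteq \m^{[p]}$ by the same computation you outline; the only cosmetic difference is that the paper runs the final step as an induction on $n$ rather than invoking the fixed-point equation for $I_\infty$ from Corollary \ref{cor:Fedder's criterion for quasi-F-splitting}, but these are interchangeable.
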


\begin{proof}
In both cases, we have $f^{p-1}\in \m^{[p]}$.
We note that $I_n \subseteq (f^{p-2})+I_1$ for all $n$.
Indeed, since $\Delta_1(f^{p-1}) \equiv -f^{p(p-2)} \Delta_1(f) \mod F(R)$, we have
\[
\theta({\cred v^\perp}) \subseteq (f^{p-2}).
\]
Then (1) immediately follows from this fact and Theorem \ref{thm:Fedder's criterion}.

Next, we prove that $I_n \subseteq \m^{[p]}$ for all $n$ under the assumption in (2).
For $n=1$, we have
\[
I_1:=(I^{[p]} \colon I)=I^{[p]}+(f^{p-1}) \subseteq \m^{[p]}.
\]
For $n \geq 2$, we have
\[
I_{n} := \theta(F_*I_{n-1} \cap v^\perp) +I_1 \subseteq u(F_*{\cred (}\Delta_1(f) f^{p(p-2)}(f^{p-2},I^{[p]}))) + I_1 \subseteq \m^{[p]}+I_1\subseteq \m^{[p]}.
\]
Therefore, $\sht(R/I)=\infty$ by Theorem \ref{thm:Fedder's criterion}.
\end{proof}

\section{Fedder type criteria in the graded case}
In this section, we generalize Theorem \ref{thm:Fedder's criterion} to the case of a complete intersection in a product of (weighted) projective spaces.
In particular, we obtain a simple version of Theorem \ref{thm:Fedder's criterion} in the Calabi-Yau case, i.e., the case where a given complete intersection has a trivial canonical sheaf (see Theorem \ref{thm:Fedder's criterion for Calabi-Yau}).

\begin{conv}\label{conv:graded ring}
In this subsection, $k$ is an $F$-finite field of characteristic $p>0$ and $S:=k[x_1,\ldots,x_N]$ is a polynomial ring with a $\mathbb{Z}_{\geq 0}^m$-graded structure.
We assume that the degree of $x_i$ is non-zero for all $i$, which is denoted by $\mu_i$
We define $\mu$ by $\mu:=\sum \mu_i$.
Let 
\[
\m:=\bigoplus_{h \in \mathbb{Z}_{\geq 0}^m \backslash \{0\}} S_h =(x_1,\ldots,x_N).
\]
Let $R:=S_\m$. Then $(R,\m,k)$ is a regular local ring, where we denote the extension of $\m$ in $R$ by $\m$ by abuse of notation.
Let $y_1,\ldots,y_s$ be a $p$-basis of $k$. 
{\cred We denote the elements of 
\[
\{y_1^{j_1}\cdots y_s^{j_s}x_1^{i_1} \cdots x_N^{i_N} \mid 0 \leq j_1,\ldots,j_s, i_1,\ldots,i_N \leq p-1\}
\]
by $v_1, \ldots, v_{d}$.
Then $F_{*}v_1, \ldots, F_*v_{d}$
}
is a basis of $F_*S$ over $S$ as in Lemma \ref{lem:standard basis}.
{\cred We may assume that
\[
v:=v_d= {\cred (y_1\cdots y_s x_1 \cdots x_N)^{p-1}}.
\]
}
The dual basis is denoted by $u_1,\ldots,u_d$ and $u:=u_d$.
We note that $\deg(v)=(p-1)\mu$.
Using this basis, we define the notion of $p$-monomials as in Definition \ref{definition:monomial} and ${\cred \Delta_1'} \colon S \to S/F(S)$ as in Definition \ref{definition:delta map}.
Therefore, we have the following commutative diagram:
\[
\xymatrix{
S \ar[r]^-{{\cred \Delta_1'}} \ar[d] \ar@{}[rd]|{\circlearrowleft} & S/F(S) \ar[d] \\
R \ar[r]_-{\Delta_1} & R/F(R).
}
\]
Indeed, if we take a $p$-monomial decomposition $a=\sum a^p_iv_i$, then its image gives a $p$-monomial decomposition in $R$.
For this reason, we denote ${\cred{\Delta}_1'}$ by $\Delta_1$ by abuse of notation.
\end{conv}

\begin{rmk}\label{rmk:same properties}
We can see that $\Delta_1$ for $S$ satisfies the properties as in Proposition \ref{prop:rule for delta map}, \ref{prop:delta gives a splitting}, and Theorem \ref{thm:delta formula}.
For the proof, it is enough to check the injectivity of the map
\[
S/F(S) \to R/F(R).
\]
We prove the injectivity. If the image of $x \in S$ is contained in $F(R)$, then there exists $f,y \in S$ such that $x=(y/f)^p$.
Since $S \to F_*S$ is a split injection, so is the map
\[
F \colon S/f \to F_*(S/f^p).
\]
Now it follows from $y^p=xf^p \in (f^p)$ that $y \in (f)$, and thus $x \in F(S)$.
\end{rmk}

\begin{lem}\label{lem:homogeneous choice}
If $a \in S$ is a homogeneous element of degree $e$, then there exists a representative of $\Delta_1(a)$ which is homogeneous of degree $pe$.
\end{lem}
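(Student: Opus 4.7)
The plan is to exhibit an explicit $p$-monomial decomposition of $a$ in which every summand is homogeneous of degree $d$, and then read off a homogeneous representative for $\Delta_1(a)$ from the explicit formula of Example \ref{eg:explicit delta}.

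First, I would construct such a decomposition. Writing $a = \sum_j b_j M_j$ as a $k$-linear combination of monomials in $x_1,\ldots,x_N$, each $M_j$ is homogeneous of degree $d$ since $a$ is. For each monomial, divide each exponent by $p$ to write $M_j = N_j^p \cdot x_1^{r_1}\cdots x_N^{r_N}$ with $0\leq r_l\leq p-1$; expand each coefficient in the $p$-basis $\{y_1^{j_1}\cdots y_s^{j_s}\}$ of $k$ over $k^p$ as $b_j = \sum_J c_{j,J}^p\, y_1^{j_1}\cdots y_s^{j_s}$; and collect terms. This yields a decomposition
\[
a = \sum_{j,J} (c_{j,J} N_j)^p \cdot v_{i(j,J)},
\]
with $v_{i(j,J)} = y_1^{j_1}\cdots y_s^{j_s} x_1^{r_1}\cdots x_N^{r_N}$ one of the basis elements fixed in Convention \ref{conv:graded ring}. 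Since the $y_l$'s have degree zero and the product $(c_{j,J} N_j)^p \cdot v_{i(j,J)}$ recovers $b_j M_j$ up to a unit, every summand is a $p$-monomial that is homogeneous of degree $d$.

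Next, I would apply the formula of Example \ref{eg:explicit delta} to this decomposition. Labelling the $p$-monomial summands as $f_1,\ldots,f_r$, a representative of $\Delta_1(a)$ in $S$ is
\[
\sum_{\substack{0 \leq \alpha_1,\ldots,\alpha_r \leq p-1 \\ \alpha_1 + \cdots + \alpha_r = p}} \frac{1}{p}\binom{p}{\alpha_1,\ldots,\alpha_r}\, f_1^{\alpha_1}\cdots f_r^{\alpha_r}.
\]
Each summand has degree $d(\alpha_1+\cdots+\alpha_r) = pd$, so this entire representative is homogeneous of degree $pd$. Via Remark \ref{rmk:same properties}, its image in $S/F(S)$ agrees with the one defined by the same formula over the localization $R$, so it is genuinely a representative of $\Delta_1(a)$.

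The argument is essentially bookkeeping and there is no real obstacle; the only mild subtlety is ensuring the basis $v_1,\ldots,v_d$ of $F_*S$ over $S$ consists of monomials whose $y$-factors are of degree zero, so that the grading behaves correctly even when $k$ is imperfect. This is guaranteed by the choice of basis in Convention \ref{conv:graded ring}.
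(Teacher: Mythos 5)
Your proof is correct and takes essentially the same approach as the paper's: produce a $p$-monomial decomposition of $a$ all of whose summands are homogeneous of degree $d$, then invoke the explicit formula of Example \ref{eg:explicit delta} to obtain a representative of degree $pd$. The paper does this in one step by taking the basis expansion $a=\sum a_i^p v_i$ and observing that each $a_i^p v_i$ is homogeneous of degree $d$ (which relies on $F_*S$ being a graded free $S$-module with homogeneous basis), while you unfold this basis expansion explicitly via the monomial decomposition of $a$ and the $p$-basis expansion of each coefficient in $k$; this makes the homogeneity of each summand visible by inspection rather than by a graded-module argument, but the underlying decomposition and the concluding step are the same.
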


\begin{proof}
We take the basis expansion $a=\sum^{d}_{i=1} a^p_iv_i$.
Since every $v_i$ is homogeneous, it follows that $a_i^p v_i$ is homogeneous of degree $e$.
By the construction of $\Delta_1$ and Remark \ref{eg:explicit delta}, the element
\[
\sum_{\substack{0 \leq \alpha_1, \ldots,\alpha_d \leq p-1 \\ \alpha_1+\cdots+\alpha_d=p}} \frac{1}{p} \binom{p}{\alpha_1, \ldots ,\alpha_d}(a^p_1v_1)^{\alpha_1} \cdots (a_d^pv_d)^{\alpha_d}
\]
is a representative of $\Delta_1(a)$ which is homogeneous of degree $pe$.
\end{proof}

\begin{cor}\label{cor:graded Fedder}
Let $f_1,\ldots,f_m$ be a homogeneous regular sequence in $S$, $I:=(f_1,\ldots,f_m)$, and $f:=f_1 \cdots f_m$.
We define an $S$-module homomorphism $\theta$ by
\[
\theta \colon {\cred v^\perp} \to S\ ;\ F_*a \mapsto u(F_*{\cred (}\Delta_1(f^{p-1})a{\cred )}).
\]
We define $I_1:=(I^{[p]} \colon I)$, and
\[
I_{s+1}:=\theta(F_*I_s \cap v^\perp)+I_1
\]
inductively.
Then 
\[
\sht(S/I)=\mathrm{min}\{n \mid I_n \nsubseteq \m^{[p]} \}.
\]
\end{cor}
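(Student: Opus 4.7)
The plan is to reduce the graded statement to the local Fedder-type criterion, Theorem \ref{thm:Fedder's criterion}, by localizing at $\m$. Since $I=(f_1,\ldots,f_m)$ is homogeneous, $S/I$ inherits the $\mathbb{Z}_{\geq 0}^m$-grading with $(S/I)_0 = k$, so Proposition \ref{prop:multigraded ring versus local ring} applied to $S/I$ yields $\sht(S/I) = \sht(R/IR)$. This immediately pairs the quasi-$F$-split height on the left of the desired equality with the height to which Theorem \ref{thm:Fedder's criterion} can be applied.

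Next I would verify the compatibility of the ideal sequence with the localization $S\to R$. Writing $I_n \subseteq S$ for the ideals defined in the statement and $I_n' \subseteq R$ for the corresponding ideals produced by Theorem \ref{thm:Fedder's criterion} for the regular sequence $f_1,\ldots,f_m$ in $R$, the key claim is that $I_n \cdot R = I_n'$ for every $n \geq 1$. For $n=1$, Lemma \ref{lem:colon rule for regular sequence} identifies both colon ideals with $f^{p-1}S + I^{[p]}$ and its extension to $R$, respectively. For the inductive step, the chosen homogeneous basis $v_1,\ldots,v_d$ of $F_*S$ over $S$ remains a basis of $F_*R$ over $R$ after localizing at $\m$, so the dual basis, the generator $u$, the submodule $v^\perp = \ker u$, and the map $\theta$ all commute with $S \to R$; consequently
\[
\theta\bigl(F_* I_n \cap v^\perp\bigr)\cdot R \;=\; \theta\bigl(F_* I_n' \cap v^\perp\bigr),
\]
and the recurrence propagates the equality $I_n \cdot R = I_n'$ to $n+1$.

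Finally, since $\m^{[p]} = (x_1^p,\ldots,x_N^p)$ is $\m$-primary, standard primary-decomposition theory gives $\m^{[p]} R \cap S = \m^{[p]}$, so that $S/\m^{[p]} \hookrightarrow R/\m^{[p]} R$ is injective; this means $I_n \subseteq \m^{[p]}$ if and only if $I_n' \subseteq \m^{[p]} R$. Combining the three reductions with Theorem \ref{thm:Fedder's criterion}, I obtain
\[
\sht(S/I) \;=\; \sht(R/IR) \;=\; \min\{n \mid I_n' \not\subseteq \m^{[p]} R\} \;=\; \min\{n \mid I_n \not\subseteq \m^{[p]}\},
\]
as required. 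No genuine obstacle appears in this argument; the one subtle point is that the $p$-basis chosen in Convention \ref{conv:graded ring} consists of homogeneous elements in $S$ itself (rather than only in $R$), which is precisely what lets $u$, $v^\perp$, $\theta$, and $\Delta_1$ be defined on $S$ in a way that is compatible with the local constructions of Theorem \ref{thm:Fedder's criterion}.
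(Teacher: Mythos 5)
Your proof is correct and is essentially the same argument the paper uses; the paper's proof of Corollary~\ref{cor:graded Fedder} is just the one-line "It follows from Theorem~\ref{thm:Fedder's criterion} and Proposition~\ref{prop:multigraded ring versus local ring}," and your write-up fills in precisely the implicit bridging steps (compatibility of the ideal sequences with $S\to R=S_\m$ via the shared homogeneous basis, and $\m^{[p]}R\cap S=\m^{[p]}$ since $\m^{[p]}$ is $\m$-primary).
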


\begin{proof}
The assertion follows from Theorem \ref{thm:Fedder's criterion} and Proposition \ref{prop:multigraded ring versus local ring}.
\end{proof}

\begin{rmk}\label{rmk:homogeneous ideals}
In the setting of Corollary \ref{cor:graded Fedder}, we can see that ideals $I_n$ are homogeneous by induction on $n$ as follows:

First, $I_1=(I^{[p]}\colon I)=I^{[p]}+(f^{p-1})$ is a homogeneous ideal.
We assume that $I_n$ is a homogeneous ideal.
For an element $F_*a \in F_*I_n \cap v^\perp$, we take a homogeneous decomposition $a=\sum a_h$.
Since $u(F_*a_h)$ is homogeneous and $\mathrm{deg}(a_h) \neq \mathrm{deg}(a_g)$ if $h \neq g$, we have $u(F_*a_h)=0$ for all $h$.
Thus, $F_*I_n \cap v^\perp$ is generated by homogeneous elements.
Since $\Delta_1(f^{p-1})$ has a representative which is homogeneous, we have $\theta(F_*I_n \cap v^\perp)$ is also generated by homogeneous elements.
Therefore, $I_{n+1}=\theta(F_*I_n \cap v^\perp)+I_1$ is a homogeneous ideal, as desired.
\end{rmk}

\begin{lem}\label{lem:homogeneous of special degree}
Let $a \in S$ be a homogeneous element of degree $(p^l-1)\mu$.
Then we have
\[
a \equiv b(x_1 \cdots x_N)^{p^l-1} \mod \m^{[p^l]}
\]
for some $b \in k$.
In particular, $a \in \m^{[p^l]}$ if and only if $u^l(F^l_*{\cred (}ac {\cred )})=0$ for every $c \in k$.
\end{lem}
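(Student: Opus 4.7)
The plan is to do a direct monomial analysis and then identify $u^l \circ F^l_*$ with a specific coefficient functional.

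First I would establish the first assertion by a diophantine argument. Write $a = \sum_{\mathbf{n}} c_{\mathbf{n}} x^{\mathbf{n}}$ with $c_{\mathbf{n}} \in k$ and $\sum n_r \mu_r = (p^l-1)\mu$. Every monomial with some $n_r \geq p^l$ lies in $\m^{[p^l]}$, so modulo $\m^{[p^l]}$ only monomials with $0 \leq n_r \leq p^l-1$ survive. Setting $e_r := p^l-1-n_r \geq 0$, the degree condition becomes $\sum e_r \mu_r = 0$; since each $\mu_r \in \mathbb{Z}^m_{\geq 0}\setminus\{0\}$, a coordinate-wise check forces $e_r = 0$ for every $r$. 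Thus the only surviving monomial is $(x_1 \cdots x_N)^{p^l-1}$, and taking $b := c_{(p^l-1, \ldots, p^l-1)} \in k$ proves the first claim. Since $(x_1 \cdots x_N)^{p^l-1} \notin \m^{[p^l]}$ and the decomposition modulo $\m^{[p^l]}$ is unique, we also obtain that $a \in \m^{[p^l]}$ if and only if $b = 0$.

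For the second assertion, I would interpret $u^l \circ F^l_*$ as the coefficient functional that extracts the coefficient of the basis vector $F^l_*((y_1 \cdots y_s x_1 \cdots x_N)^{p^l-1})$ in the basis $\{F^l_*(y^{\mathbf{j}} x^{\mathbf{i}})\}_{0 \leq j_r, i_r \leq p^l-1}$ of $F^l_* S$ over $S$; this follows by iterating the defining property of $u$ together with the base-$p$ expansion of the exponents. Applied to any element $e \in \m^{[p^l]}$ homogeneous of degree $(p^l-1)\mu$, the diophantine argument above forces every monomial $x^{\mathbf{n}}$ appearing in $e$ to have its base-$p^l$-residue vector $\mathbf{i}$ different from $(p^l-1, \ldots, p^l-1)$ (otherwise the degree computation would force the monomial to be $(x_1 \cdots x_N)^{p^l-1}$, which is not in $\m^{[p^l]}$). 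Hence the extracted coefficient is zero, giving $u^l(F^l_*(ac)) = 0$ whenever $ac \in \m^{[p^l]}$.

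For the forward direction of the equivalence, $a \in \m^{[p^l]}$ implies $ac \in \m^{[p^l]}$ for every $c \in k$, so $u^l(F^l_*(ac)) = 0$. For the converse, assume $a \notin \m^{[p^l]}$, so $b \neq 0$; choosing $c := b^{-1}(y_1 \cdots y_s)^{p^l-1} \in k$ yields $ac \equiv (y_1 \cdots y_s)^{p^l-1}(x_1 \cdots x_N)^{p^l-1} \pmod{\m^{[p^l]}}$, and since this residue is precisely the basis vector whose $u^l$-value is $1$, we obtain $u^l(F^l_*(ac)) = 1 \neq 0$. The main obstacle is the vanishing of $u^l \circ F^l_*$ on the $\m^{[p^l]}$-part in the specific degree $(p^l-1)\mu$; this vanishing fails for general elements of $\m^{[p^l]}$ but is rescued here by the uniqueness of the exponent vector $(p^l-1, \ldots, p^l-1)$ of total degree $(p^l-1)\mu$ among those with all entries at most $p^l-1$.
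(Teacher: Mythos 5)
Your proof is correct and follows essentially the same route as the paper's: a degree computation to show that, modulo $\m^{[p^l]}$, only the monomial $(x_1\cdots x_N)^{p^l-1}$ can survive in degree $(p^l-1)\mu$, followed by the identification of $u^l$ with the coefficient of the top basis vector $F^l_*\bigl((y_1\cdots y_s x_1\cdots x_N)^{p^l-1}\bigr)$. The only cosmetic difference is that you work with the raw monomial decomposition of $a$ and the complementary exponents $e_r=p^l-1-n_r$, whereas the paper packages the same degree argument via the basis expansion of $F^l_*S$ over $S$; both reduce to the observation that $\sum e_r\mu_r=0$ with $e_r\geq 0$ and $\mu_r\neq 0$ forces $e_r=0$.
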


\begin{proof}
We note that
\[
\{F^{l-1}_* {\cred (} \prod_{1 \leq i \leq l} v_{m_i}^{p^{i-1}} {\cred )} \mid 1 \leq m_i \leq d \}
\]
is a basis of $F^l_*S$ over $S$.
This basis is denoted by ${\cred F^l_* w_1,\ldots, F^l_*w_e}$.
We take the basis expansion $a=\sum a_i^{p^l}w_i$.
Since $a$ and $w_i$ are homogeneous, $a_i^{p^l}w_i$ is homogeneous of degree $(p^l-1)\mu$.
Then we have
\[
(p^l-1)\mu=p^l \deg(a_i)+\deg(w_i).
\]
By the choice of $w_i$, the degree of $w_i$ is at most $(p^l-1)\mu$.
Therefore, if $\deg (w_i) \neq (p^l-1)\mu$, then $\deg(a_i) \neq 0$, and in particular, $a_i^{p^l}w_i\in \m^{[p^l]}$.
Furthermore, if $\deg(w_i)=(p^l-1)\mu$, then the degree of $a_i$ is zero and 
\[
w_i= {\cred b_i (x_1 \cdots x_N)^{p^l-1} }
\]
for some $b_i \in k$.
In particular, we have
\[
a \equiv b(x_1\cdots x_N)^{p^l-1} \mod \m^{[p^l]}
\]
for some $b \in k$, and thus the first assertion holds.

Next, we prove that $a \in \m^{[p^l]}$ if and only if $u^l(F^l_*{\cred (}ac{\cred )})=0$ for every $c \in k$.
Suppose that $a\in \m^{[p^l]}$.
Then, for every $c \in k$, we have
$u^l(F^l_* {\cred (}ac {\cred )}) \in \m$. Since $\deg(u^l(F^l_*{\cred (}ac {\cred )}))=0$, we have $u^l(F^l_* {\cred (} ac {\cred )})=0$, as desired.
Suppose that $u^l(F^l_*{\cred (}ac {\cred )})=0$ for all $c \in k$.
Note that
\begin{align*}
    b=u^l(F^l_*{\cred (}b^{p^l}(y_1\cdots y_sx_1 \cdots x_N)^{p^l-1} {\cred )}).
\end{align*}
We put $c:=(by_1 \cdots y_s)^{p^l-1} \in k$.
Since $a \equiv b(x_1 \cdots x_N)^{p^l-1} \mod \m^{[p^l]}$, 
we obtain
\begin{align*}
    0
    &=u^l(F^l_*{\cred (}ac {\cred )}) \\
    &\equiv u^l(F^l_* {\cred (}b^{p^l}(y_1\cdots y_sx_1 \cdots x_N)^{p^l-1} {\cred )}) \mod \m \\
    &=b,
\end{align*}
and in particular, $b\in k\cap\m=0$, as desired.
\end{proof}

\begin{lem}\label{lem:homogeneous of top degree}
Let $a \in S$ be a homogeneous element of degree $(p^l-1)\mu$.
Then the following are equivalent.
\begin{itemize}
    \item[\textup{(1)}] $a \in \m^{[p^l]}$,
    \item[\textup{(2)}] $u^{l-1}(F^{l-1}_* {\cred (}aS {\cred )}) \subseteq \m^{[p]}$,
    \item[\textup{(3)}] $u^{l-1}(F^{l-1}_* {\cred (}ak {\cred )}) \subseteq \m^{[p]}$, and
    \item[\textup{(4)}] $u^l(F^l_* {\cred (}ak {\cred )})=0$.
\end{itemize}
\end{lem}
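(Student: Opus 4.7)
The plan is to establish the cycle $(i) \Rightarrow (ii) \Rightarrow (iii) \Rightarrow (iv) \Rightarrow (i)$. Two of the links are immediate: $(ii) \Rightarrow (iii)$ since $k \subseteq S$, and $(iv) \Rightarrow (i)$ is the last assertion of Lemma \ref{lem:homogeneous of special degree}. So the work reduces to proving $(i) \Rightarrow (ii)$ and $(iii) \Rightarrow (iv)$.

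For $(i) \Rightarrow (ii)$, I would use the identity $\m^{[p^l]} = (x_1^{p^l}, \ldots, x_N^{p^l})$ to write $a = \sum_{i=1}^N t_i x_i^{p^l}$ for some $t_i \in S$. Applying the relation $x_i^p \cdot F^{l-1}_* r = F^{l-1}_*(x_i^{p^l} r)$ and using the $S$-linearity of $u^{l-1} \colon F^{l-1}_* S \to S$, for any $s \in S$ one computes
\[
u^{l-1}(F^{l-1}_*(as)) = \sum_{i=1}^N u^{l-1}(x_i^p \cdot F^{l-1}_*(t_i s)) = \sum_{i=1}^N x_i^p \cdot u^{l-1}(F^{l-1}_*(t_i s)) \in \m^{[p]},
\]
which is exactly condition $(ii)$.

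For $(iii) \Rightarrow (iv)$, the key step is a homogeneous degree count. By expanding $F_* b$ in the basis of Convention \ref{conv:graded ring}, one verifies that whenever $b \in S$ is homogeneous of degree $D$, the element $u(F_* b)$ is homogeneous of degree $(D - (p-1)\mu)/p$. Iterating this $l - 1$ times, starting from $ac$ of degree $(p^l - 1)\mu$ (for $c \in k$), the element $b := u^{l-1}(F^{l-1}_*(ac))$ is homogeneous of degree $(p-1)\mu$. The hypothesis $(iii)$ says precisely that $b \in \m^{[p]}$. Applying the equivalence in Lemma \ref{lem:homogeneous of special degree} with $l$ replaced by $1$ (and taking $c = 1$ in its statement) yields $u(F_* b) = 0$. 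Since $u^l(F^l_*(ac)) = u(F_* u^{l-1}(F^{l-1}_*(ac))) = u(F_* b)$, condition $(iv)$ follows for every $c \in k$.

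The main bookkeeping obstacle is correctly handling the iterated notation $u^l$---in particular the identity $u^l(F^l_* b) = u(F_* u^{l-1}(F^{l-1}_* b))$ tacitly used above---together with tracking the grading through iterated applications of $u(F_* \cdot)$. Neither is substantially deep, so I do not anticipate a serious obstacle beyond routine verification.
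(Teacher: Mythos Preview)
Your proposal is correct and follows essentially the same route as the paper: the same implication cycle, the same reduction to Lemma \ref{lem:homogeneous of special degree} for both $(iv)\Rightarrow(i)$ and $(iii)\Rightarrow(iv)$ (the latter via the degree computation showing $u^{l-1}(F^{l-1}_*ac)$ is homogeneous of degree $(p-1)\mu$). You have simply spelled out the step $(i)\Rightarrow(ii)$, which the paper records as ``clear''.
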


\begin{proof}
(1) $\Rightarrow$ (2) $\Rightarrow$ (3) is clear.
We prove (3) $\Rightarrow$ (4).
We take $c \in k$.
We put $a':=u^{l-1}(F^{l-1}_* {\cred (}ac {\cred )})$. Then the degree of $a'$ is $(p-1)\mu$.
Since $a'\in \m^{[p]}$, we have $u^l(F^l_* {\cred (} ac {\cred )})=u(F_*a')=0$ by Lemma \ref{lem:homogeneous of special degree}.
Finally, (4) $\Rightarrow$ (1) follows from Lemma \ref{lem:homogeneous of special degree}.
\end{proof}

\begin{thm}[Theorem \ref{Intro:thm:Fedder's criterion for Calabi-Yau}]\label{thm:Fedder's criterion for Calabi-Yau}
Let $f'_1,\ldots,f'_m$ be a homogeneous regular sequence {\cred in $S$}, $I:=(f'_1,\ldots,f'_m)$, and $f:=f'_1 \cdots f'_m$.
Suppose that $f$ is a homogeneous element of degree $\mu$.
We take a representative of $\Delta_1(f^{p-1})$ which is homogeneous of degree $p(p-1)\mu$. We also denote the representative by $\Delta_1(f^{p-1})$.
We define an $S$-module homomorphism $\theta$ by
\[
\theta \colon F_*S \to S\ ;\ F_*a \mapsto u(F_*{\cred (}\Delta_1(f^{p-1})a{\cred )}).
\]
We define $f_n$ for $n\in\Z_{>0}$ by $f_1:=f^{p-1}$ and
$f_n:=f^{p-1}\Delta_1(f^{p-1})^{1+p+\cdots+p^{n-2}}$
{\cred for $n \geq 2.$}
Then
\[
\sht(S/I)=\mathrm{inf}\{n \mid \theta^{n-1}(F^{n-1}_* {\cred (}f^{p-1}k {\cred )}) \nsubseteq \m^{[p]} \}=\mathrm{inf}\{n \mid f_n \notin \m^{[p^n]} \}
\]
holds.
In particular, we have $\sht(S/I)=\sht(S/f)$.
\end{thm}

\begin{proof}
First, we note that if $a$ is a homogeneous element of degree $(p-1)\mu$, then
$\deg(\Delta_1(f^{p-1})a)=(p^2-1)\mu$, and thus 
\[
\deg(\theta(F_*a))=((p^2-1)\mu-(p-1)\mu)/p=(p-1)\mu.
\]
Moreover, we note that the degree of $f_l$ is $(p^l-1)\mu$ and $\theta^{l-1}(F^{l-1}_*f^{p-1})=u^{l-1}(F^{l-1}_*f_l)$.
By Lemma \ref{lem:homogeneous of top degree}, $\theta^{l-1}(F^{l-1}_* {\cred (}f^{p-1}k {\cred )}) \subseteq \m^{[p]}$ if and only if $f_l \in \m^{[p^l]}$, and in particular, 
$\mathrm{inf}\{n \mid \theta^{n-1}(F^{n-1}_* {\cred (} f^{p-1}k {\cred )}) \nsubseteq \m^{[p]} \}=\mathrm{inf}\{n \mid f_n \notin \m^{[p^n]} {\cred \}}$ holds.

Next, we put $n:=\sht(S/I)$.
We may assume that $n \geq 2$.
By Corollary \ref{cor:graded Fedder}, $I_n$ is not contained in $\m^{[p]}$ and $I_{n-1}$ is contained in $\m^{[p]}$, where the sequence of ideals $\{I_n\}$ is as in Corollary \ref{cor:graded Fedder}.
\begin{claim}\label{claim:inclusion}
$u^{l-1}(F^{l-1}_*{\cred (}f_lk {\cred )}) \subseteq I_l$ for all $l \leq n$.
\end{claim}
\begin{claimproof}
We prove the assertion by induction on $l$.
When $l=1$, we have \[f_1=f^{p-1} \in I_1=(f^{p-1})+I^{[p]}\] and the assertion holds.
For $l \geq 2$, we assume that $u^{l-2}(F^{l-2}_*{\cred (}f_{l-1}k {\cred )}) \in I_{l-1}$.
Since $l \leq n$, we have 
\[
u^{l-2}(F^{l-2}_*{\cred (} f_{l-1}k {\cred )}) \subseteq I_{l-1} \subseteq \m^{[p]}.
\]
By Lemma \ref{lem:homogeneous of top degree} (3)$\Rightarrow$(4), we have $u^{l-1}(F^{l-1}_* {\cred (}f_{l-1}k {\cred )})=0$, and thus
$F_* {\cred (}u^{l-2}(F^{l-2}_* {\cred (} f_{l-1}k {\cred )}) {\cred )} \subseteq F_*I_{l-1} \cap v^\perp$.
Therefore, we conclude that ${\cred \theta(F_* (u^{l-2}(F^{l-2}_* {\cred (} f_{l-1}k {\cred )})))}=u^{l-1}(F^{l-1}_* {\cred (}f_lk {\cred )}) \subseteq I_l$.
\end{claimproof} \\
By Claim \ref{claim:inclusion}, we have
\[
u^{l-1}(F^{l-1}_* {\cred (}f_lk {\cred )}) \subseteq I_l \subseteq \m^{[p]}
\]
for $l \leq n-1$.
Therefore, we have
\[
\sht(S/I)=n \leq \mathrm{inf}\{l \mid \theta^{l-1}(F^{l-1}_* {\cred (} f^{p-1}k {\cred )}) \nsubseteq \m^{[p]}\}.
\]
For the proof of the converse inequality, we prove the following claim.
\begin{claim}\label{claim:ip claim}
$\theta(F_*I^{[p]} \cap v^\perp) \subseteq I_1$ holds.
\end{claim}

\begin{claimproof}
We note that $F_*I^{[p]} \cap v^{\perp}$ is generated by elements of the form $x{f'_j}^p$ with $u(F_*x)=0$.
By Proposition \ref{prop:rule for delta map} (4) and Remark \ref{rmk:same properties}, we have
\[
\Delta_1({f'_j}^pf^{p-1}) \equiv {f_j'}^p\Delta_1(f^{p-1})+f^{p(p-1)}\Delta_1(f'_j) \mod F(S),
\]
and thus
\[
\theta(F_* {\cred (}x {f'_j}^p{\cred )})=u(F_*{\cred (}x{f'_j}^p\Delta_1(f^{p-1}){\cred )})=u(F_*{\cred (}x\Delta_1({f'_j}f^{p-1}){\cred )})-u(F_*{\cred (}xf^{p(p-1)}\Delta_1({f'}_j){\cred )}).
\]
Since $f'_jf^{p-1}={f'}_j^p(f/f'_j)^{p-1}$, we have
\[
u(F_*{\cred (}x\Delta_1(f'_jf^{p-1}){\cred )})=u(F_*{\cred (}x{f'_j}^{p^2}\Delta_1((f/f'_j)^{p-1}){\cred )}) \in ({f'_j}^p)\subseteq I^{[p]} \subseteq I_1.
\]
Moreover, we have $u(F_*{\cred (}xf^{p(p-1)}\Delta_1({f'}_j){\cred )})\in I_1$, and we conclude the assertion.
\end{claimproof}

Next, we take homogeneous elements $g_1,\ldots,g_{n-1}$ such that $u(F_*g_i)=0$, $\theta(F_*g_i) \equiv g_{i+1} \mod I_1$, $g_1 \in I_1$, and $\theta(F_*g_{n-1}) \notin \m^{[p]}$.
Since $\sht(S/I)=n$, we can take such a sequence.
We put
$h_{i}:=g_i-\theta(F_*g_{i-1})$
for $2 \leq i \leq n-1$.
\begin{claim}
The degree $d_i$ of $g_i$ is $(p-1)\mu$ for all $i$.
\end{claim}

\begin{claimproof}
First, we prove $d_{n-i} \leq (p-1)\mu$ for all $i$ by induction on $i$.
Since $\theta(F_*g_{n-1})$ is not contained in $\m^{[p]}$, we have
$d_{n-1} \leq (p-1)\mu.$

Next, we assume that $d_{n-i+1} \leq (p-1)\mu$ for some $i \geq 2$.
Then we have
\[
\deg(\theta(F_*g_{n-1+1})) \leq (p-1)\mu.
\]
Suppose that $d_{n-i} > (p-1)\mu$. Then $\deg(g_{n-i}) \neq \deg(\theta(F_*g_{n-i+1}))$.
Since $g_{n-i}=\theta(F_*g_{n-i+1})+h_{n-i}$, the element $g_{n-i}$ is the degree $d_{n-i}$-part of $h_{n-i}$.
In particular, $F_*g_{n-i}\in F_*I^{[p]} \cap v^\perp$.
By Claim \ref{claim:ip claim}, we have $\theta(F_*g_{n-i}) \in I_1$, and thus $\theta(F_*g_{n-1}) \in I_{i}$, a contradiction with $\sht(S/I)=n$. Therefore, we have $d_{n-i} \leq (p-1)\mu$.

By the above argument, we also obtain that $d_{i+1}=\deg(\theta(F_*g_i))$ and $h_i$ is a homogeneous of degree $d_i$.
Next, we prove $d_i=(p-1)\mu$ by induction on $i$.
When $i=1$, we obtain the assertion since $g_1\in I_1$.
Suppose $\deg(g_1) < (p-1)\mu$. Then $F_*g_1\in F_*I^{[p]} \cap v^{\perp}$, and Claim \ref{claim:ip claim} shows that $\theta(F_*g_1)\in I_1$.
Therefore, we have $g_2 \in I_1$.
Repeating this procedure, we have $\theta(F_*g_{n-2})\in I_{n-1}$, a contradiction with $\sht(S/I)=n$.
Therefore, $d_1=(p-1)\mu$ holds.
Next, if $d_{i-1}=(p-1)\mu$ for $i \geq 2$, 
then we have
\[
d_{i}=\deg(\theta(F_*g_{i-1}))=((p^2-1)\mu-(p-1)\mu)/p=(p-1)\mu,
\]
as desired.
\end{claimproof}

\begin{claim}
There exist $g'_1,\ldots,g'_{n-1}$ and $a_1,\ldots,a_{n-1} \in k$ such that $u(F_*g'_i)=0$, $g'_1=a_1f^{p-1}$, $g'_{i+1}=\theta(F_*g'_i)+a_if^{p-1}$ and $g_i \equiv g'_i \mod I^{[p]}$.
\end{claim}

\begin{claimproof}
We construct such elements inductively.
For $i=1$, since the degree of $g_1$ is $(p-1)\mu$, we have
$g_1=f^{p-1}a_1+q_1$,
where $a_1 \in k$ and $q_1 \in I^{[p]}$.
We put $g'_1:=a_1f^{p-1}$.
Since $g'_1 \notin \m^{[p]}$, we have $u(F_*g'_1)=0$ by Lemma \ref{lem:homogeneous of top degree}.
Therefore, $g'_1$ satisfies the desired result.
Next, we assume that we construct $g'_1,\ldots,g'_{i-1}$ and $a_1,\ldots,a_{i-1}$ for some $i \geq 2$.
We define $q_{i-1}$ by
$g_{i-1}=g'_{i-1}+q_{i-1}.$
Then $q_{i-1}$ is contained in $I^{[p]}$.
Furthermore, since 
$u(F_*g_{i-1})=u(F_*g'_{i-1})=0$,
we have $F_*q_{i-1} \in F_*I^{[p]} \cap v^\perp$.
By Claim \ref{claim:ip claim}, $\theta(F_*q_{i-1})$ is contained in $I_1$.
Since the degrees of $g_i$ and $\theta(F_*g'_{i-1})$ are $(p-1)\mu$ and we have
\[
g_i \equiv \theta(F_*g_{i-1}) \equiv \theta(F_*g'_{i-1}) \mod I_1,
\]
there exist $a_i \in k$ and $q_i \in I^{[p]}$ such that
\[
g_i=\theta(F_*g'_{i-1})+a_if^{p-1}+q_i.
\]
We put $g'_i:=\theta(F_*g'_{i-1})+a_if^{p-1}$, then $g_i \equiv g'_i \mod I^{[p]}$.
In particular, we have 
\[
u(F_*g'_i)=u(F_*g'_i)-u(F_*g_i) \in u(F_*I^{[p]}) \subseteq I \subseteq \m.
\]
Since the degree of $g'_i$ is $(p-1)\mu$, we have $u(F_*g'_i)=0$ by Lemma \ref{lem:homogeneous of top degree}.
Therefore, $g'_i$ satisfies the desired conditions.
\end{claimproof}

Replacing $g_i$ by $g'_i$, we may assume 
\[
\theta(F_*g_{n-1})=\theta^{n-1}(F^{n-1}_{\cred *} {\cred (}a_1f^{p-1}{\cred )})+\cdots+\theta(F_*{\cred (} a_{n-1}f^{p-1} {\cred )}) \equiv \theta^{n-1}(F^{n-1}_* {\cred (} a_1f^{p-1} {\cred )}) \mod \m^{[p]}.
\]
by Claim \ref{claim:inclusion}.
Since the left-hand side is not contained in $\m^{[p]}$, we have
\[
\theta^{n-1}(F_*{\cred (}f^{p-1}k {\cred )}) \nsubseteq \m^{[p]},
\]
as desired.
\end{proof}

\begin{thm}\label{thm:base change, Calabi-Yau case}
Let $k \subseteq K$ be a field extension and $S_K:=K[x_1,\ldots,x_N]$.
Let $f'_1,\ldots,f'_m$ be a homogeneous regular sequence in $S$, $I:=(f'_1,\ldots,f'_m)$, and $f:=f'_1 \cdots f'_m$.
Suppose that $f$ is a homogeneous element of degree $\mu$.
Then we have $\sht(S/I) =\sht(S_K/IS_K)$.
\end{thm}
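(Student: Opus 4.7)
The plan is to apply Theorem \ref{thm:Fedder's criterion for Calabi-Yau} to both $S/I$ and $S_K/IS_K$ and to show that the resulting numerical criteria produce the same value. The label ``Calabi-Yau case'' signals that $f$ is implicitly of degree $\mu$, as is required to invoke that theorem.

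The first step will be to verify a compatibility of $\Delta_1$ under the inclusion $S \hookrightarrow S_K$. Using the $p$-basis $\{v_i\}$ of $F_*S$ over $S$ fixed in Convention \ref{conv:graded ring} (built from a $p$-basis $\{y_j\}$ of $k$ together with the monomials $x_1^{e_1}\cdots x_N^{e_N}$ with $0\le e_j \le p-1$) and a compatible $p$-basis $\{w_\ell\}$ of $F_*S_K$ over $S_K$ built analogously from a $p$-basis of $K$, every $v_i$ expands in $S_K$ as a sum of $p$-monomials with coefficients that are $p$-th powers of elements of $K$. Hence every $p$-monomial of $S$ is a sum of $p$-monomials of $S_K$, so by the uniqueness part of Lemma \ref{lem:well-definedness for delta} the construction $\Delta_W$ is compatible with the natural map $\var{W}(S)/[F(S)] \to \var{W}(S_K)/[F(S_K)]$. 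In particular, a homogeneous representative $D \in S$ of $\Delta_1(f^{p-1})$ of degree $p(p-1)\mu$ supplied by Lemma \ref{lem:homogeneous choice} remains a valid homogeneous representative of $\Delta_1^{S_K}(f^{p-1})$ in $S_K/F(S_K)$.

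Granting this, the element
\[
f_n := f^{p-1}\, D^{1+p+\cdots+p^{n-2}} \in S
\]
appearing in Theorem \ref{thm:Fedder's criterion for Calabi-Yau} is literally the same whether computed inside $S$ or inside $S_K$, and applying that theorem on each side gives
\[
\sht(S/I) = \inf\{n \mid f_n \notin \m^{[p^n]}\}, \qquad \sht(S_K/IS_K) = \inf\{n \mid f_n \notin \m_K^{[p^n]}\},
\]
where $\m_K = (x_1,\ldots,x_N) \subset S_K$. Since any field extension is faithfully flat, so is $S \hookrightarrow S_K$, and therefore $\m_K^{[p^n]} \cap S = \m^{[p^n]}$. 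As $f_n \in S$, the membership conditions $f_n \in \m^{[p^n]}$ and $f_n \in \m_K^{[p^n]}$ are equivalent, so the two infima coincide.

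The main obstacle is the careful justification of the first step when $K$ is not $F$-finite, because the convention fixes $p$-bases of both $k$ and $K$, and a $p$-basis of $K$ need not extend a given one of $k$ in general. One way to bypass this is to factor $k \subseteq K$ into a separable algebraic part, for which Corollary \ref{cor:base change} applies directly after passing to the localization at the homogeneous maximal ideal via Proposition \ref{prop:multigraded ring versus local ring}, followed by an extension in which $p$-bases can be constructed in a compatible way and the comparison of $p$-monomial decompositions carried out explicitly.
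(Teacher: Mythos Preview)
Your core strategy is the same as the paper's: invoke Theorem~\ref{thm:Fedder's criterion for Calabi-Yau} on both sides (using the implicit hypothesis $\deg f=\mu$), check that a homogeneous representative of $\Delta_1(f^{p-1})$ computed in $S$ remains one in $S_K$, so that the same elements $f_n$ serve for both rings, and then compare the conditions $f_n\in\m^{[p^n]}$ and $f_n\in\m_K^{[p^n]}$ via faithful flatness of $S\hookrightarrow S_K$.

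The one point where the paper is sharper is the handling of non-$F$-finite $K$, which you flag correctly but resolve only vaguely. The paper does not factor $k\subseteq K$ through a separable part; instead it replaces $K$ by its perfect closure. This is justified by the monotonicity $\sht(S/I)\le\sht(S_K/IS_K)\le\sht(S_{K^{\mathrm{perf}}}/IS_{K^{\mathrm{perf}}})$ coming from Proposition~\ref{prop:fiber space} (any field extension splits as a module over the base field), so it suffices to prove equality with $K$ perfect. Once $K$ is perfect it is automatically $F$-finite, so Theorem~\ref{thm:Fedder's criterion for Calabi-Yau} applies to $S_K$ without further comment, and the $\Delta_1$-compatibility becomes immediate: each $y_j$ in the $p$-basis of $k$ has a $p$-th root in $K$, so every $p$-monomial of $S$ is already a $p$-monomial of $S_K$ (not merely a sum of such). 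This single reduction replaces both your $p$-basis comparison and your proposed detour through Corollary~\ref{cor:base change}.
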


\begin{proof}
By considering field extensions $k \subseteq K^{1/p^{\infty}}$ and $K \subseteq K^{1/p^{\infty}}$, we may assume that $K$ is perfect.
As in Convention \ref{conv:graded ring}, we define
\[
\Delta_{1,K} \colon S_K \to S_K/F(S_K)
\]
by using the basis
\[
\{F_* {\cred (} x_1^{i_1}\cdots x_N^{i_N} {\cred )} \mid 0 \leq i_j \leq p-1 \}.
\]
Then $y^{j_1}_1\cdots y^{j_s}_s x_1^{i_1} \cdots x_N^{i_N}$ is a $p$-monomial in $S_K$ since $y_l$ has a $p$-th root in $K$.
In particular, $p$-monomials in $S$ are also $p$-monomials in $S_K$.
Therefore, a $p$-monomial decomposition of every $a \in S$ gives a $p$-monomial decomposition in $S_K$, and thus we have
\[
\Delta_1(a) \equiv \Delta_{1,K}(a) \mod F(S_K).
\]
Therefore, if we take a representative of $\Delta_1(f^{p-1})$ which is homogeneous of degree $p(p-1)\mu$. 
Then its image in $S_{K}$ is a representative of $\Delta_{1,K}(f^{p-1})$, which is homogeneous of degree $p(p-1)\mu$.
Therefore, if we define the elements $f_n \in S$ as in Theorem \ref{thm:Fedder's criterion for Calabi-Yau},
then we have
\[
\sht(S/I)=\mathrm{inf}\{n \mid f_n \notin \m^{[p^n]} \}=\mathrm{inf}\{n \mid f_n \notin (\m S_K)^{[p^n]} \}= \sht(S_K/IS_K),
\]
as desired.
\end{proof}

\begin{lem}\label{lem:special inversion of adjunction}
Let $g_1,\ldots ,g_m$ be homogeneous elements of $S$ such that $f:=g_1 \cdots g_m$ is of degree $\mu$.
Assume that $\Delta_1(g_2)=\cdots=\Delta_1(g_m)=0$.
Then $\sht(S/f) \geq \sht(S/g_1)$.
In particular, if we further assume that $g_1,\ldots ,g_r$ is a regular sequence, then we have 
\[
\sht(S/(g_1,\ldots ,g_r)) \geq \sht(S/g_1).
\]
\end{lem}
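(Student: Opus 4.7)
The plan is to apply the general Fedder-type criterion (Theorem~\ref{thm:Fedder's type criterion for general ideals}) to $S/f$ and verify that the \emph{same} tuple of elements witnessing the quasi-$F$-splittability of $S/f$ also serves for $S/g_1$. Everything hinges on the identity
\[
\Delta_r(f) \equiv h^{p^r}\,\Delta_r(g_1) \pmod{F(S)}, \qquad r \geq 0,
\]
where $h := g_2\cdots g_m$. To derive this, I would first note that by the Leibniz-type rule of Proposition~\ref{prop:rule for delta map}(iii), an easy induction on $m$ using $\Delta_1(g_i)=0$ for $i\ge 2$ shows $\Delta_1(h)=0$. Applied again to $f=g_1 h$, this yields $\Delta_1(f)\equiv h^p\Delta_1(g_1)\pmod{F(S)}$, and Theorem~\ref{thm:delta formula} bootstraps this to
\[
\Delta_r(f) \equiv f^{p^r-p}\Delta_1(f) \equiv (g_1 h)^{p^r-p} h^p \Delta_1(g_1) = h^{p^r}\cdot g_1^{p^r-p}\Delta_1(g_1) \equiv h^{p^r}\Delta_r(g_1)\pmod{F(S)}
\]
for $r\ge 1$ (the $r=0$ case is immediate from $f=g_1 h$). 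I fix the representative $\Delta_r(f):=h^{p^r}\Delta_r(g_1)\in S$.

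Now suppose $\sht(S/f)\le n$. Theorem~\ref{thm:Fedder's type criterion for general ideals} provides $g_1^{\sharp},\ldots,g_n^{\sharp}\in S$ with $g_1^{\sharp}\notin\m^{[p]}$, $u(F_* g_i^{\sharp})=0$ for $i\ge 2$, and---specialising condition~(iii) of Lemma~\ref{lem:condition for inducing map} to the single generator $x=f$ of $(f)$---
\[
\sum_{r=0}^{n-s} u^r\!\left(F^r_* g^{\sharp}_{r+s}\,\Delta_r(f)\right) \in (f)^{[p^s]} = h^{p^s}(g_1^{p^s}), \qquad 1\le s\le n.
\]
Substituting my representative and using the projection formula $u^r(F^r_* h^{p^r} a)=h\cdot u^r(F^r_* a)$ (obtained by iterating $u(F_* b^p c)=b\,u(F_* c)$), the left-hand side becomes $h\cdot\Sigma_s$, where $\Sigma_s:=\sum_{r=0}^{n-s} u^r(F^r_* g^{\sharp}_{r+s}\Delta_r(g_1))$. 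Since $h$ is a nonzerodivisor in the polynomial ring $S$, cancelling yields $\Sigma_s\in h^{p^s-1}(g_1^{p^s})\subseteq(g_1)^{[p^s]}$. By Lemma~\ref{lem:condition for inducing map} applied to the principal ideal $(g_1)$, the tuple $(g_1^{\sharp},\ldots,g_n^{\sharp})$ satisfies condition~(iii), hence~(i), for $(g_1)$, so Theorem~\ref{thm:Fedder's type criterion for general ideals} concludes $\sht(S/g_1)\le n$, proving $\sht(S/f)\ge\sht(S/g_1)$.

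The main subtlety is that $\Delta_r$ takes values in $S/F(S)$, so the computations above require a specific representative to make sense in $S$; the payoff is the built-in representative-independence of condition~(i) of Lemma~\ref{lem:condition for inducing map}, which allows me to substitute my preferred representative of $\Delta_r(f)$ without loss. The ``in particular'' for the complete intersection $I=(g_1,\ldots,g_r)$ follows by exactly the same strategy---applying Theorem~\ref{thm:Fedder's type criterion for general ideals} to $I$ and specialising condition~(iii) to $x=f\in I$---supplemented by the observation that condition~(iii) applied to $x=g_j$ ($j\ge 2$) forces $g_s^{\sharp} g_j\in I^{[p^s]}$ (since $\Delta_r(g_j)=0$ for $r\ge 1$), which is precisely the extra input needed to refine the resulting colon ideal $(I^{[p^s]}:h)$ down to $(g_1)^{[p^s]}$.
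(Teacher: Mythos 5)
Your argument for the main inequality $\sht(S/f)\geq\sht(S/g_1)$ is correct and takes a genuinely different route from the paper. The paper works with the graded criteria: it applies Theorem~\ref{thm:Fedder's criterion for Calabi-Yau} to $S/f$ (tracking $\theta^{j-1}(F^{j-1}_*f^{p-1})$) and Corollary~\ref{cor:graded Fedder} to $S/g_1$ (tracking the ideal chain $I_j$), and reconciles the two chains via the identity $\theta=(f/g_1)^{p-1}\theta_1$ on $v^\perp$ together with the degree bound of Lemma~\ref{lem:homogeneous of top degree}. You instead argue entirely at the level of the local criterion, Theorem~\ref{thm:Fedder's type criterion for general ideals}: you show that any witness tuple $(g_1^{\sharp},\ldots,g_n^{\sharp})$ for the principal ideal $(f)$ is literally a witness tuple for $(g_1)$, by combining $\Delta_r(f)\equiv h^{p^r}\Delta_r(g_1)$ (with $h:=g_2\cdots g_m$) with the projection formula and cancelling the nonzerodivisor $h$. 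This avoids the degree bookkeeping altogether and is a clean alternative. The one point to make explicit is that $u^r(F_*^rg_{r+s}^{\sharp}\,a)$ is representative-independent in $a\in S/F(S)$ for $r\geq1$ precisely because $u(F_*g_{r+s}^{\sharp})=0$ whenever $r+s\geq2$, while the $r=0$ term involves no lift since $\Delta_0$ is the identity.

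Your treatment of the ``in particular'' is, however, incomplete. After extracting $h\Sigma_s\in I^{[p^s]}$ from $x=f$ and $g_s^{\sharp}g_j\in I^{[p^s]}$ from $x=g_j$ ($j\geq2$), you assert these data ``refine the colon ideal $(I^{[p^s]}:h)$ down to $(g_1)^{[p^s]}$''; but that refinement is precisely the missing step and you do not carry it out. For a regular sequence one computes $(I^{[p^s]}:g_2\cdots g_m)=(g_1^{p^s},g_2^{p^s-1},\ldots,g_m^{p^s-1})$, which is strictly larger than $(g_1^{p^s})$, and the auxiliary memberships you cite do not obviously close that gap. The route the paper implicitly takes is much shorter: once $\sht(S/f)\geq\sht(S/g_1)$ is known, Theorem~\ref{thm:Fedder's criterion for Calabi-Yau} applied to the regular sequence $g_1,\ldots,g_m$ (whose product has degree $\mu$) gives $\sht(S/(g_1,\ldots,g_m))=\sht(S/f)$ on the nose, and the ``in particular'' is immediate. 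Replace your colon-ideal sketch with this one-line deduction.
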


\begin{proof}
Suppose $\sht(S/f)=1$. Then $f^{p-1} \notin \m^{[p]}$, and $g_1^{p-1} \notin \m^{[p]}$. Thus, $\sht(S/g_1)=1$.
We now assume that $\sht(S/f) \geq 2$.
By Proposition \ref{prop:rule for delta map} (4) and Remark \ref{rmk:same properties}
\[
\Delta_1(f^{p-1}) \equiv \sum_{1 \leq i \leq r} (f/g_i)^{p(p-1)}\Delta_1(g_i^{p-1}) = (f/g_1)^{p(p-1)}\Delta_1(g_1^{p-1}) \mod F(S).
\]
We write $\theta:=u(F_*\Delta_1(f^{p-1})\cdot \underline{\ \ })$ and $\theta_1:=u(F_*\Delta_1(g_1^{p-1})\cdot \underline{\ \ })$.
Then we have
\[
\theta=(f/g_1)^{p-1}\theta_1\ \text{on}\ {\cred v^\perp.}
\]
We define a sequence $\{g_{1,j}\}_j$ by $g_{1,1}:=f^{p-1}$ and
\[
g_{1,j}:=(f/g_1)^{p-1}\theta_1(F_*g_{1,j-1})
\]
We denote the minimum positive integer $j$ satisfying $u(F_*g_{1,j}) \neq 0$ by $h$.
Then, by the construction of $g_{1,j}$, we have $g_{1,h} \in I_{h}$, where $I_h$ is the ideal defined by taking $I=(g_1)$ in Corollary \ref{cor:graded Fedder}. 
Since $\deg(f)=\mu$, we have $\deg(\theta^{h-1}(F^{h-1}_*f^{p-1}))=(p-1)\mu$.
Moreover, we have
\[
g_{1,j}=\theta^{j-1}(F_*^{j-1}f^{p-1})
\]
for $j \leq h$.
Indeed, if the equation holds for $j-1$, then
\[
g_{1,j}= \theta_1(F_*g_{1,j-1})(f/g_1)^{p-1}=\theta(F_*g_{1,j-1})=\theta^{j-1}(F^{j-1}_*f^{p-1})
\]
where the second equation follows from $u(F_*g_{1,j-1})=0$ and the third equation follows from the induction hypothesis.
Therefore, the degree of $g_{1,h}$ is $(p-1)\mu$.
Since $u(F_*g_{1,h}) \neq 0$, it follows from Lemma \ref{lem:homogeneous of top degree} that $g_{1,h}\notin \m^{[p]}$.
Thus, $\sht(S/g_1)$ is at most $h$ by Theorem \ref{thm:Fedder's criterion}. 
On the other hand, since $h$ is the minimum integer with
\[
u(F_*{\cred (}\theta^{h-1}(F^{h-1}_*f^{p-1}){\cred )}) \notin \m,
\]
we conclude from Theorem \ref{thm:Fedder's criterion for Calabi-Yau} that $\sht(S/f)=h$.
\end{proof}

\begin{rmk}
Theorem \ref{thm:base change, Calabi-Yau case} does not hold in non-Calabi-Yau cases in general.
Indeed, some quasi-$F$-split wild conic bundle gives an example (see \cite{KTY2}).
\end{rmk}

In \cite{KTY2}, we prove a generalization of the following theorem to hypersurfaces in a multiprojective space or a weighted projective space.

\begin{thm}\label{thm:Fedder's criterion for projective varieties in weighted case}
Let $P:=\P^{N-1}_{[x_1:\cdots :x_N]}$ be a projective space over an $F$-finite field $k$ of characteristic $p>0$.
Let $S:=k[x_1,\ldots,x_N]$ be a section ring of $P$.
Let $X$ be a complete intersection in $P$ defined by a homogeneous regular sequence $f_1,\ldots,f_m \in S$ with $\mathrm{dim}(X) \geq 1$.
Then the following hold.
\begin{itemize}
    \item[\textup{(1)}] We define ideals $\{I_n\}$ as in Corollary \ref{cor:graded Fedder} by $f_1,\ldots,f_m$. Then we have
    \[
    \sht(X)=\mathrm{inf}\{n \mid I_n \nsubseteq \m^{[p]}\}.
    \]
    \item[\textup{(2)}] We assume the degree of $f_1\cdots f_m$ coincides with $N$ and define elements $\{f_n\}$ as in Theorem \ref{thm:Fedder's criterion for Calabi-Yau} by $f_1,\ldots,f_m$. Then we have
    \[
    \sht(X)=\mathrm{inf}\{n \mid f_n \notin \m^{[p^n]} \}
    \]
\end{itemize}
\end{thm}

\begin{proof}
By Corollary \ref{cor:graded Fedder} and Theorem \ref{thm:Fedder's criterion for Calabi-Yau}, it is enough to show that 
\[
\sht(X)=\sht(S/({\cred f_1},\ldots,f_r)).
\]
It follows from \cite[Corollary 7.17]{KTTWYY2} and Proposition \ref{prop:multigraded ring versus local ring}.
\end{proof}

\section{Examples}

\subsection{Calabi-Yau varieties}

In this subsection, we apply Theorem \ref{thm:Fedder's criterion for Calabi-Yau} to compute quasi-$F$-split heights of Calabi-Yau hypersurfaces.
{\cred All computations can be verified using the Macaulay 2 (based on Theorem \ref{thm:Fedder's criterion for projective varieties in weighted case}) code available on the second author’s homepage (\cite{Takamatsu_code}).}

\begin{eg}[Heights of RDP K3 surfaces]
In \cite[{\cred Section 4}]{Goto} and \cite{Yui}, Artin-Mazur heights of K3 surfaces obtained as the minimal resolution of some hypersurfaces in weighted projective 3-space are computed.
By Theorem \ref{thm:Fedder's criterion for projective varieties in weighted case}, we can compute quasi-$F$-split heights of some of their hypersurfaces.
For example, we can show that the height of the weighted Delsarte surface defined by
\[
x_{0}^{8} x_{1} + x_{1}^{6}x_{2} + x_{2}^{3} + x_{3}^{2}x_{0} = 0,
\]
is $8$ when $p=3$.
By \cite[Proposition {\cred 3.46} and Corollary {\cred 3.47}]{KTTWYY},
{\cred the height of the minimal resolution of this surface is also $8$.
This recovers the computation in \cite[Example 4.2]{Goto}.}
\end{eg}

\begin{eg}[Heights of K3 surfaces over $\mathbb{F}_3$]\label{example:char 3 K3 surface}
\,
In Table \ref{table:K3 surfacee}, we present examples of K3 surfaces (or more precisely, quartic K3 surfaces) over $\mathbb{F}_{3}$ of {\cred any given} height.
Note that such examples over $\mathbb{F}_{2}$ are already given in \cite{Kedlaya} by computing rational points of K3 surfaces.
\begin{table}[ht]
\caption{Artin-Mazur heights of K3 surfaces over $\mathbb{F}_3$}
\centering
\begin{tabular}{|c|l|}
\hline
ht &  \hspace{45mm} equation\\
\hline
\hline
$1$ & $x^4+y^4+z^4+2w^4+x^2yw+yz^2w$ \\
\hline
$2$ & $x^4+2y^4+2z^4+2w^4+xyz^2$ \\
\hline
$3$ & $x^4+y^4+z^4+w^4+x^2z^2+xyz^2+z^3w$ \\
\hline
$4$ & $x^4+y^4+z^4+w^4+x^2z^2+xyz^2$ \\
\hline
$5$ & $x^4+y^4+z^4+w^4+x^3z+z^3w+yz^2w+yzw^2$ \\
\hline
$6$ & $x^4+y^4+z^4+w^4+x^2z^2+x^2yz\ {\cred  + \, xz^3}$ \\
\hline
$7$ & $x^4+y^4+z^4+w^4+xy^2z+xz^2w+yzw^2+y^2zw$ \\
\hline
$8$ &  $x^4 + x^2yz + x^2yw + 2x^2z^2 + xyw^2 + 2y^4 + y^3w + z^4 + w^4$\\
\hline
$9$ & $x^4+y^4+z^4+w^4+xy^3+y^3w+z^2w^2+2xyz^2+yzw^2$ \\
\hline
$10$ &  
\begin{tabular}{l}
$    x^4+2x^2yz+x^2yw+xy^2w+y^4+y^3w+y^2z^2$ \\
$+2y^2zw+y^2w^2+yz^3+ yz^2w+yzw^2+z^4+zw^3 $ \\
\end{tabular}\\
\hline
$\infty$ & $x^4+y^4+z^4+w^4$ \\
\hline
\end{tabular}
\label{table:K3 surfacee}
\end{table}

\end{eg}

\begin{eg}[Calabi-Yau threefolds of large height]\label{eg:ht=60}
The Calabi-Yau threefold (or more precisely, the smooth quintic threefold) over $\mathbb{F}_{2}$ defined by 
\[
\{x^5+y^5+z^5+w^5+u^5+xz^3w+yzw^3+x^2zu^2+y^2z^2w+xy^2wu+yzwu^2=0\}
\]
has the Artin-Mazur height $60$. 
This is the largest Artin-Mazur height example of all the smooth quintic threefolds we have found. In general, the height of a smooth Calabi-Yau threefold is known to be less than or equal to $102$, and we do not know if there exists one whose height is bigger than $60$.
\end{eg}

\subsection{Calabi-Yau hypersurfaces of arbitrary high Artin-Mazur heights over $\F_2$}
In this subsection, for given $h$, we obtain explicit defining equations of Calabi-Yau hypersurfaces over $\F_2$ of quasi-$F$-split heights $2h$.
In \cite{KTY2}, we construct a Calabi-Yau hypersurface over $\var{\F}_p$ whose Artin-Mazur height $h$ for every positive integer $h\in\Z_{>0}$ and {\cred every} prime number $p$. 
\begin{defn}\label{defn:height for elements}
Let $k$ be a perfect field {\cred of characteristic $p>0$}.
We put $S:=k[x_1,\ldots,x_N]$.
We define the basis of $F_*S$ over $S$ and the map $\Delta_1$ by using variables as in Convention \ref{conv:graded ring}.
Furthermore, for $f \in S$, we put
\[
\theta_f \colon F_*S \to S\ ;\ a \mapsto u(F_*{\cred (}\Delta_1(f^{p-1})a{\cred )}),
\]
where $u$ is defined in Convention \ref{conv:graded ring}.
We take a homogeneous element $f \in S$ of degree $N$ and an element $a \in S$.
We define the \emph{height of $a$ with respect to $f$} by
\[
\sht_f(a):=\mathrm{inf}\{h \mid \theta_f^{h-1}(F^{h-1}_*a) \notin \m^{[p]} \}.
\]
\end{defn}

\begin{rmk}\label{rmk:example}
By Theorem \ref{thm:Fedder's criterion for Calabi-Yau}, we have
$\sht(S/f)=\sht_f(f^{p-1})$.
Furthermore, by Lemma \ref{lem:homogeneous of top degree}, for a homogeneous element $a \in S$ of degree $(p-1)N$,
\[
\sht_f(a)=\mathrm{inf}\{h \mid u(F_*{\cred (}\theta_f^{h-1}(F^{h-1}_*a){\cred )}) \neq 0 \}.
\]
\end{rmk}

\begin{lem}\label{lem:height criterion for element}
We use the notation of Definition \ref{defn:height for elements}.
Let $\alpha$ be a homogeneous element of degree $N$ and 
\[
\alpha=M_1+\cdots+M_n
\]
the monomial decomposition.
Suppose that $k=\F_2$.
For a positive integer $h$, if
\[
\mathrm{min}\{i \mid \sht_f(M_i)\}=h\ \text{and}\  \# \{i \mid \sht_f(M_i)\}\ \text{is odd},
\]
then $\sht_f(\alpha)=h$.
\end{lem}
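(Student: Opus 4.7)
The plan is to exploit two features of the situation: (i) the operator $\theta_f$ and its iterates are $S$-linear, hence additive, and (ii) in characteristic two, under the degree hypotheses we are in, the ``leading coefficient'' detected by Lemma \ref{lem:homogeneous of top degree} takes values in $\F_2$, so ``odd versus even'' count decides non-vanishing modulo $\m^{[p]}$.

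First I would verify a degree bookkeeping: each monomial $M_i$ is homogeneous of degree $N$, and a direct count using $\deg\Delta_1(f^{p-1})=p(p-1)N$ together with the degree shift of $u$ shows that $\theta_f$ preserves the degree $(p-1)N$ subspace when viewed on the relevant graded pieces; iterating, $\theta_f^{j}(F^{j}_*M_i)$ is homogeneous of degree $(p-1)N=N$ for every $j\ge 0$ (here we use $p=2$). Then by Lemma \ref{lem:homogeneous of top degree} each $\theta_f^{j}(F^{j}_*M_i)$ is congruent modulo $\m^{[p]}$ to a scalar in $k=\F_2$ times the monomial $x_1\cdots x_N$; this scalar is either $0$ or $1$.

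Next I would separate the argument into a lower bound and an upper bound. For the lower bound $\sht_f(\alpha)\ge h$: by $S$-linearity,
\[
\theta_f^{h'-1}(F^{h'-1}_*\alpha)=\sum_{i=1}^n \theta_f^{h'-1}(F^{h'-1}_*M_i),
\]
and for every $h'<h$ each summand lies in $\m^{[p]}$ since $\sht_f(M_i)\ge h>h'$ for all $i$. Hence the sum lies in $\m^{[p]}$, so $\sht_f(\alpha)\ge h$.

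For the upper bound $\sht_f(\alpha)\le h$: apply the same linearity at level $h'=h$ and split the sum according to whether $\sht_f(M_i)=h$ or $\sht_f(M_i)>h$. Summands of the latter type lie in $\m^{[p]}$. For each summand of the former type, by the paragraph above and the definition of $\sht_f(M_i)=h$, the element $\theta_f^{h-1}(F^{h-1}_*M_i)$ is congruent to $x_1\cdots x_N$ modulo $\m^{[p]}$ (scalar $1$, not $0$). Adding these congruences over $\F_2$,
\[
\theta_f^{h-1}(F^{h-1}_*\alpha)\equiv \bigl(\#\{i:\sht_f(M_i)=h\}\bigr)\cdot x_1\cdots x_N \pmod{\m^{[p]}}.
\]
Because the count is odd, the right-hand side equals $x_1\cdots x_N\notin\m^{[p]}$, which yields $\sht_f(\alpha)\le h$. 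Combined with the lower bound this proves $\sht_f(\alpha)=h$. The only nontrivial input beyond basic linearity is Lemma \ref{lem:homogeneous of top degree}, which is the real source of the ``odd parity suffices'' phenomenon and is specific to $p=2$ via $k=\F_2$.
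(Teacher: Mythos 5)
Your proof is correct and follows essentially the same route as the paper's: additivity of $a \mapsto \theta_f^{j}(F^{j}_*a)$, preservation of the degree $(p-1)N$ piece under $\theta_f$, and Lemma \ref{lem:homogeneous of top degree} (together with Lemma \ref{lem:homogeneous of special degree}) to reduce the question to an $\F_2$-valued coefficient of $(x_1\cdots x_N)^{p-1}$, so that parity of the count decides non-vanishing modulo $\m^{[p]}$. The paper's version is terser but contains exactly these two steps (the lower bound from $u(F_*\theta_f^{h'-1}(F^{h'-1}_*M_i))=0$ for $h'<h$, and the upper bound from the odd count of $i$ with $\sht_f(M_i)=h$).
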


\begin{proof}
Since the minimum of the heights of $M_i$ is $h$, it follows that $\theta^{h-1}(F^{h-1}_*M_i) \neq v$ holds if and only if $\sht_f(M_i)=h$ for all $i$ as $k=\F_2$.
Since the number of such $i$ is odd, we have $\theta^{h-1}(F^{h-1}_*\alpha)=v$, and thus $\sht_f(\alpha)$ is at most $h$.
Since $u(F_* {\cred (}\theta_f^{h'-1}(F^{h'-1}_*M_i) {\cred )})=0$ for all $i$ and $h'<h$, we have $u(F_* {\cred (}\theta_f^{h'-1}(F^{h'-1}_*\alpha){\cred )})=0$, thus $\sht_f(\alpha)$ is $h$.
\end{proof}

\begin{eg}[Unboundedness of height for Calabi-Yau varieties in $p=2$]\label{eg:unboundedness}
Let $h$ be a positive integer.
We put $N:=2^h+1$ and $S=\F_2[a,b,c,x_1,\ldots,x_{N-3}]$.
Furthermore, we define a sequence of integers $N_0,\ldots,N_{h-1}$ satisfying $N_0=N-2$ and $N_{i-1}-N_{i}=2^i$ for $1 \leq i \leq h-1$.
We note that
\[
2+2^2+\cdots+2^{h-2}+2^{h-1}=2^h-2=N-3,
\]
thus $N_{h-1}=1$, $N_{h-2}=3$, $N_{h-3}=7$, for example.
We define a homogeneous element $f$ of degree $N$ by
\[
f:=a^N+b^N+c^N+x_1^N+\cdots+x_{N-3}^N+(b+c)g,
\]
where $g=0$ if $h=1$ and 
\[
g=c^2x_1\cdots x_{N-3}+x_{N_1}^2 \cdots x_{N-3}^2+x_{N_2}^4 \cdots x_{N_1-1}^4+ \cdots + x_1^{2^{h-1}} x_2^{2^{h-1}}
\]
{\cred if $h\neq 1$.}
We note that $g$ is homogeneous of degree $2^h=N-1$.
We prove that $\Proj(S/f)$ is a smooth Calabi-Yau variety of height $2h$.
\begin{claim}
$\Proj(S/f)$ is smooth.
\end{claim}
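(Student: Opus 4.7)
The plan is to apply the Jacobian criterion in characteristic $2$. Since $N=2^h+1$ is odd, $\partial_a f = a^{N-1}$. Next, I observe two structural facts about $g$: first, $g$ contains no variable $b$, and second, the variable $c$ occurs in $g$ only in the combination $c^2$ from the leading term $c^2 x_1 \cdots x_{N-3}$. Since $\partial_c(c^2) = 2c = 0$ in characteristic $2$, this gives $\partial_b g = \partial_c g = 0$. Consequently
\[
\partial_b f = b^{N-1} + g, \qquad \partial_c f = c^{N-1} + g, \qquad \partial_{x_i} f = x_i^{N-1} + (b+c)\,\partial_{x_i} g.
\]

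Suppose that $(a,b,c,x_1,\ldots,x_{N-3})$ is a common zero of all partial derivatives. From $\partial_a f = 0$ we get $a=0$. Adding $\partial_b f$ and $\partial_c f$ cancels $g$ (characteristic $2$) to yield
\[
b^{N-1} + c^{N-1} = (b+c)^{2^h} = 0,
\]
where the equality uses $N-1 = 2^h$ and freshman's dream. Hence $b = c$, so the coefficient $(b+c)$ appearing in $\partial_{x_i} f$ vanishes, and we obtain $x_i^{N-1}=0$, that is, $x_i = 0$ for every $i$. Now every monomial of $g$ contains at least one variable $x_i$, so evaluating $g$ at $x_1 = \cdots = x_{N-3} = 0$ gives $g = 0$. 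Substituting into $\partial_b f = 0$ yields $b^{N-1} = 0$, hence $b = 0$ and then $c = b = 0$.

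Thus the only common zero of the partial derivatives lies at the origin of $\mathbb{A}^{N}$, which corresponds to no point of $\Proj(S/f)$. Since $\deg f = N$ is invertible in $\F_2$, Euler's identity guarantees that any such common zero automatically lies on $V(f)$, so the Jacobian criterion gives smoothness of $\Proj(S/f)$. There is no serious obstacle here; the only subtlety is the characteristic-$2$ bookkeeping that forces $\partial_c g = 0$ (via $c$ appearing only in even powers) and produces the crucial collapse $b^{N-1}+c^{N-1}=(b+c)^{2^h}$, which in turn makes $(b+c)$ vanish and uncouples the $x_i$ equations.
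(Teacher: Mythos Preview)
Your proof is correct and follows essentially the same route as the paper's: compute $\partial_a f$, $\partial_b f$, $\partial_c f$, $\partial_{x_i} f$, use $b^{2^h}+c^{2^h}=(b+c)^{2^h}$ to force $b=c$, then kill the $x_i$, then $g$, then $b$ and $c$. The only differences are cosmetic (you handle $\partial_a f$ first rather than last, and you add the Euler-identity remark, which is correct but not needed since the singular locus is always contained in the vanishing of the partials).
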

\begin{claimproof}
Since
$\frac{\partial (bg)}{\partial b}=g$ and $\frac{\partial (cg)}{\partial c}=g$,
we have
\[
\frac{\partial f}{\partial b}=b^{2^h}+g,\ \ \frac{\partial f}{\partial c}=c^{2^h}+g.
\]
Therefore, at {\cred a} singular point, we have $b=c$.
Since 
\[
\frac{\partial f}{\partial x_i}=x_i^{2^h}+(b+c) {\cred \frac{\partial g}{\partial x_i}},
\]
at {\cred a} singular point, $x_i=0$ for all $i$, and in particular, $g=0$.
Therefore, we have $b=c=0$.
Since we have $\frac{\partial f}{\partial a}=a^{2^h}$,
we obtain that the singular points of {\cred $\Spec S/f$} is the origin, thus $\Proj(S/f)$ is smooth.
\end{claimproof}\\
By the claim, $\Proj(S/f)$ is a Calabi-Yau variety.
Next, we compute the quasi-$F$-split height of $S/f$.
If $h=1$, then $f=a^3+b^3+b^3$, thus the assertion follows from Example \ref{eg:easy example}.
Therefore, we assume that $h$ is at least $2$.
\begin{claim}\label{claim:computation of heights of elements for r is at most h}
Let $\alpha \in S$ be a monomial and $1 \leq r \leq h$ an integer.
Then $\sht_f(\alpha)=r$ if and only if 
\[
\alpha=a^{2^r-1}bcx_1^{2^{r-1}} \cdots  x^{2^{r-1}}_{N_{r-1}-1}.
\]
\end{claim}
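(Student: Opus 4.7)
The plan is to prove the claim by induction on $r$. The base case $r=1$ is immediate from Lemma \ref{lem:homogeneous of special degree}: the only degree-$N$ monomial $\alpha$ with $\alpha\notin\m^{[2]}$ (equivalently $\sht_f(\alpha)=1$) is $\alpha=abcx_1\cdots x_{N-3}=\alpha_1$.

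For the inductive step, write $T(\beta):=\theta_f(F_*\beta)$, an $\F_2$-linear operator preserving the degree-$N$ component $V_N$ of $S$. Since $\alpha_1$ is the only degree-$N$ monomial outside $\m^{[2]}$, one has $\sht_f(\alpha)=r$ iff the coefficient of $\alpha_1$ in $T^{r-1}(\alpha)$ equals $1$ and the analogous coefficients at smaller levels vanish. I would deduce the whole claim from the single combinatorial fact
\[
(\star)\quad\text{for each }1\le s\le h-1\text{ and every degree-}N\text{ monomial }\beta,\ \text{the coefficient of }\alpha_s\text{ in }T(\beta)\text{ is }1\text{ iff }\beta=\alpha_{s+1}.
\]
By $\F_2$-linearity $(\star)$ gives, for any $\eta\in V_N$, that the $\alpha_s$-coefficient of $T(\eta)$ equals the $\alpha_{s+1}$-coefficient of $\eta$. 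Iterating, the $\alpha_1$-coefficient of $T^{r-1}(\alpha)$ equals the $\alpha_r$-coefficient of $\alpha$, which (as $\alpha$ is a monomial) is $1$ exactly when $\alpha=\alpha_r$; the same argument at lower levels $s<r$ forces $\alpha\ne\alpha_s$, so the claim follows.

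To prove $(\star)$, I combine $\theta_f(F_*\beta)=u(F_*(\Delta_1(f)\beta))$ with the observation that $u(F_*m)=\alpha_s$ exactly when $m=M_s:=\alpha_s^2\cdot\alpha_1$. Thus the $\alpha_s$-coefficient of $T(\beta)$ equals the $\F_2$-coefficient of $M_s/\beta$ in $\Delta_1(f)=\sum_{i<j}f_if_j$ (valid since $p=2$). Labeling the monomial summands of $f$ as the pure powers $A_0=a^N$, $A_1=b^N$, $A_2=c^N$, $A_{k+2}=x_k^N$ together with $B_i:=bg_i$ and $C_i:=cg_i$ (where $g_0=c^2x_1\cdots x_{N-3}$ and $g_i=x_{N_i}^{2^i}\cdots x_{N_{i-1}-1}^{2^i}$ for $i\ge 1$), a systematic exponent comparison---using that every pure power has some exponent equal to $N$ while $M_s$ has all exponents at most $2^s+1<N$, and that the intervals $[N_i,N_{i-1}-1]$ partition $[1,N-3]$---shows that the only pairs $\{f_i,f_j\}$ with $f_if_j\mid M_s$ are $\{B_0,B_s\}$ (with product $B_0B_s=b^2c^2x_1\cdots x_{N-3}g_s$) and the two distinct pairs $\{B_0,C_s\}$ and $\{B_s,C_0\}$ (both with product $bc^3x_1\cdots x_{N-3}g_s$). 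The hard part will be the $\F_2$-cancellation: these latter two pairs contribute the same monomial to $\Delta_1(f)$ and so sum to zero modulo $2$, leaving only $\{B_0,B_s\}$; a direct division then gives $M_s/(B_0B_s)=\alpha_{s+1}$, which is exactly $(\star)$.
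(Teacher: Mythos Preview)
Your proof is correct and follows essentially the same approach as the paper. Both argue by reducing the height computation to the single combinatorial question of which monomial term of $\Delta_1(f)$, multiplied by a degree-$N$ monomial, can hit $\alpha_{r-1}^2\cdot v$; the paper identifies this term via the Leibniz-rule decomposition $\Delta_1(f)=\Delta_1(f')+(b^2+c^2)\Delta_1(g)+bcg^2+(b+c)gf'$, whereas you use the raw pair expansion $\Delta_1(f)=\sum_{i<j}f_if_j$ and make the $\F_2$-cancellation of $\{B_0,C_s\}$ and $\{B_s,C_0\}$ explicit. Your formulation via the single statement $(\star)$ together with linearity is arguably a bit cleaner than the paper's terse ``by the induction hypothesis'' (which is applied to the polynomial $\theta_f(F_*\alpha)$ rather than to a monomial), but the underlying computation is identical.
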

\begin{claimproof}
We put
\[
\alpha_r:=a^{2^r-1}bcx_1^{2^{r-1}} \cdots  x^{2^{r-1}}_{N_{r-1}-1}.
\]
We prove the statement by induction on $r$.
For $r=1$, it follows from 
\[
\alpha_1=abcx_1\cdots x_{N-3}.
\]
For $r \geq 2$, by the induction hypothesis, $\sht_f(\alpha)=r$ if and only if $\theta_f(F_*\alpha)$ contains the term $\alpha_{r-1}$.
Therefore, it is equivalent to the condition that $\Delta_1(f)\alpha$ contains
\[
\beta_r:=\alpha_{r-1}^2v=a^{2^r-1}b^3c^3x_1^{2^{r-1}+1} \cdots x_{N_{r-2}-1}^{2^{r-1}+1} x_{N_{r-2}} \cdots x_{n-3}.
\]
We put
\[
f':=a^N+b^N+c^N+x_1^N+\cdots+x_{n-3}^N.
\]
Then
\[
\Delta_1(f)=\Delta_1(f')+(b^2+c^2)\Delta_1(g)+bcg^2+(b+c)gf'.
\]
Since $r \leq h$, we have $2^r-1 \leq 2^h-1 < N$.
In the terms appearing $\Delta_1(f)$,
\[
b^2 \cdot c^2x_1 \cdots x_{n-3} \cdot bx_{N_{r-1}}^{2^{r-1}} \cdots x_{N_{r-2}-1}^{2^{r-1}} 
\]
only contributes to the equivalent condition.
Thus, the height of $\alpha$ is $r$ if and only if $\alpha=\alpha_r$.
\end{claimproof}\\
We note that $\alpha_h=a^{2^h-1}bc$.
Next, we consider the $h=2$ case.
In this case,
\[
f=a^5+b^5+c^5+x_1^5+x_2^5+(b+c)(c^2x_1x_2+bx_1^2x_2^2).
\]
By the above claim, it is enough to show that $\theta_f^2(F^2_*f)$ contains 
\[
\alpha_2=a^3bc.
\]
For a monomial $\alpha \in S$, the height of $\alpha$ is three if and only if $\Delta_1(f)\alpha$ contains $a^7b^3c^3x_1x_2$.
Therefore, we have
\[
\alpha=a^2b^3,\ \mathrm{or}\ a^2b^2c.
\]
Since $f\Delta_1(f)$ contains $a^5 \cdot b^5 \cdot c^3x_1x_2$ and the heights of the other terms are not four, the height of $f$ is four.

From now on, we assume that $h$ is at least three.
\begin{claim}
Let $\alpha \in S$ be a monomial and $h+1 \leq r \leq 2h-1$ an integer.
We put $\rho_r:=2^{r}-2^{r-1}-\cdots-2^{h+1}-2^{h}-2^{r-h}$ and
\[
\alpha_r:=a^{\rho_r}b^{2^{r-h}}c,\ \ \alpha'_r:=a^{\rho_r}b^{2^{r-h}+1}.
\]
Then the height of $\alpha$ is $r$ if and only if $\alpha=\alpha_r$ or $\alpha'_r$.
\end{claim}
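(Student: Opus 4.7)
The plan is to induct on $r$ in the range $h+1 \leq r \leq 2h-1$, exploiting that in characteristic $2$ the map $\theta_f(F_*\alpha) = u(F_*(\Delta_1(f)\alpha))$ expands as an $\mathbb{F}_2$-sum over unordered pairs $(M_i,M_j)$ of distinct monomials of $f$, where such a pair contributes a monomial $\beta$ precisely when $M_iM_j\alpha = v\beta^2$ with $v = abcx_1\cdots x_{N-3}$. For the base case $r = h+1$, I will use the preceding claim that $\alpha_h = a^{2^h-1}bc$ is the unique monomial of height $h$, so that $\sht_f(\alpha)=h+1$ is equivalent to the existence of a pair $(M_i,M_j)$ with $M_iM_j\alpha = v\alpha_h^2 = a^{2^{h+1}-1}b^3c^3x_1\cdots x_{N-3}$. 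An enumeration of divisors of this target within the monomials of $f$ -- the band terms fail because $x_j^{2^k}$ with $k \geq 1$ does not divide $x_j^1$, and $b^N, c^N, x_j^N$ fail by degree for $h \geq 3$ -- leaves only $a^N$, $bc^2x_1\cdots x_{N-3}$, $c^3x_1\cdots x_{N-3}$; the only two admissible pairs yield $\alpha = \alpha_{h+1}$ and $\alpha = \alpha'_{h+1}$.

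For the inductive step, assuming the claim for $r-1$ with $r \geq h+2$, I will apply Lemma \ref{lem:height criterion for element}: the polynomial $\theta_f(F_*\alpha)$ has height $r-1$ iff it contains no monomial of height at most $r-2$ and contains an odd total number of the two height-$(r-1)$ monomials $\alpha_{r-1}, \alpha'_{r-1}$ furnished by the inductive hypothesis. I will first verify that $\alpha'_{r-1}$ never appears in $\theta_f(F_*\alpha)$ for any monomial $\alpha$: the target $T_2 = v(\alpha'_{r-1})^2 = a^{2\rho_{r-1}+1}b^{2^{r-h}+3}cx_1\cdots x_{N-3}$ has $c$-exponent $1$, which rules out both long monomials $bc^2x_1\cdots x_{N-3}$ and $c^3x_1\cdots x_{N-3}$ as divisors, while the bound $r \leq 2h-1$ rules out $b^N$, so no valid pair remains. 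I will then enumerate, as in the base case, the monomial divisors of $T_1 = v\alpha_{r-1}^2 = a^{2\rho_{r-1}+1}b^{2^{r-h}+1}c^3x_1\cdots x_{N-3}$: only $a^N, bc^2x_1\cdots x_{N-3}, c^3x_1\cdots x_{N-3}$ qualify, and only two pairs produce $T_1$, yielding $\alpha = \alpha_r$ and $\alpha = \alpha'_r$ via the identity $\rho_r = 2\rho_{r-1} - 2^h$.

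To close the induction I need to show that $\theta_f(F_*\alpha_r) = \theta_f(F_*\alpha'_r) = \alpha_{r-1}$ exactly, with no additional summands of lower height. The parity analysis goes as follows: for $M_iM_j\alpha/v$ to be a square the $x_j$-exponents must all be even, which forces the symmetric difference of the odd $x$-supports of $M_i, M_j$ to equal $\{1,\ldots,N-3\}$; since the only available odd $x$-supports among monomials of $f$ are $\emptyset$ (from $a^N, b^N, c^N$, and all band terms, as $2^k$ is even for $k \geq 1$), singletons $\{j\}$ (from $x_j^N$), and $\{1,\ldots,N-3\}$ (the two long monomials), exactly one of $M_i, M_j$ must be long. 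The $a$-exponent parity condition then forces the other to be $a^N$, since $a$ occurs only in $a^N$ with the odd exponent $N$. Finally, the $b$-exponent parity of $\alpha$ selects the appropriate long monomial -- $bc^2x_1\cdots x_{N-3}$ when $\alpha = \alpha_r$ ($b$-exponent $2^{r-h}$ is even) and $c^3x_1\cdots x_{N-3}$ when $\alpha = \alpha'_r$ ($b$-exponent $2^{r-h}+1$ is odd) -- and in each case the output is computed to be $\alpha_{r-1}$.

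The main obstacle will be the systematic parity bookkeeping in this final step, particularly ruling out all band monomials $bx_{N_k}^{2^k}\cdots x_{N_{k-1}-1}^{2^k}$ and $cx_{N_k}^{2^k}\cdots x_{N_{k-1}-1}^{2^k}$ for $1 \leq k \leq h-1$, as well as the sporadic terms $b^N$, $c^N$, $x_j^N$; however, once the $x$-support and $a$-parity restrictions are imposed together, these cases all collapse and the unique contributing pair is pinned down.
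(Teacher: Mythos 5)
Your proposal is correct and follows the same strategy as the paper's own proof: induction on $r$, using the preceding claim (for $r \le h$) to seed the base case $r = h+1$, and reducing the inductive step to an enumeration of which pairs of terms of $f$ can divide the targets $v\alpha_{r-1}^2$ and $v(\alpha'_{r-1})^2$, with the latter ruled out via the degree bound $2^{r-h}+3 < N$ for $h \ge 3$ exactly as the paper does. What you add beyond the paper's terse account is the explicit parity bookkeeping (the odd $x$-supports, the $a$-parity forcing $a^N$, the $b$-parity selecting which long monomial) and the "closing" step verifying that $\theta_f(F_*\alpha_r) = \theta_f(F_*\alpha'_r) = \alpha_{r-1}$ on the nose; the paper leaves this direction implicit. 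One caveat: you quote the height-criterion lemma as giving an \emph{iff} ("$\theta_f(F_*\alpha)$ has height $r-1$ iff it contains no monomial of height at most $r-2$ and an odd number of $\alpha_{r-1}, \alpha'_{r-1}$"), but the lemma only provides the sufficiency direction; the converse requires the cumulative parity observation that $\#\{j : u(F_*\theta_f^{k-1}(F_*^{k-1}M'_j)) \neq 0\}$ is even for $k < r-1$ and odd at $k = r-1$, forcing an odd increment at level $r-1$ once the lower levels have been cleared. Since your concrete enumerations establish precisely the facts needed to run that argument, this is a presentational imprecision rather than a gap, and the paper's own phrasing is equally terse on this point.
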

\begin{claimproof}
We prove the claim by induction on $r$.
For $r=h+1$,
by Claim \ref{claim:computation of heights of elements for r is at most h}, the height of $\alpha$ is $h+1$ if and only if $\Delta_1(f)\alpha$ contains
\[
\beta_{h+1}:=a^{2^{h+1}-1}b^{3}c^3x_1\cdots x_{N-3}.
\]
Therefore, we see that 
\[
\alpha=a^{2^{h+1}-2^h-2}b^2c,\ \ \mathrm{or}\ a^{2^{h+1}-2^h-2}b^3.
\]
Since $\rho_{h+1}=2^{h+1}-2^{h}-2$, we have $\alpha=\alpha_{h+1}$ or $\alpha'_{r+1}$.
For $r \geq h+2$, by the induction hypothesis, the height of $\alpha$ is $r$ if and only if $\Delta_1(f)\alpha$ contains 
\[
\beta_r:=\alpha_r^2v\ \ \mathrm{or}\ \beta'_r:=(\alpha'_r)^2v.
\]
If $\Delta_1(f)\alpha$ contains $\beta_r$, then
\[
\alpha=a^{2\rho_{r-1}-2^h}b^{2^{r-h}}c\ \ \mathrm{or}\ a^{2\rho_{r-1}-2^h}b^{2^{r-h}+1}.
\]
Since $2\rho_{r-1}-2^h=\rho_{r}$, it follows that $\alpha$ is either $\alpha_r$ or $\alpha'_r$.
Next, we consider the case where $\Delta_1(f)\alpha$ contains $\beta'_r$.
In this case, we have $2^{r-h}+3 \geq n$.
It means that $h=2$ and $r=2h-1=3$ by using $r \leq 2h-1$.
By assumption, this case does not occur.
\end{claimproof}\\
By the above claim, for a monomial $\alpha \in S$, the height of $\alpha$ is $2h$ if and only if $\Delta_1(f) \alpha$ contains
\[
\beta_{2h}:=\alpha_{2h-1}^2v\ \ \mathrm{or}\ \beta'_{2h}:=(\alpha'_{2h-1})^2v.
\]
Since 
\[
2\rho_{2h-1}+1=2^{2h}-2^{2h-1}-\cdots-2^{h+1}-2^h+1=2^h+1=N,
\]
we have
\[
\beta_{2h}=a^Nb^Nc^3x_1\cdots x_{N-3},\ \ \beta'_{2h}=a^nb^{N+2}cx_1 \cdots x_{N-3}.
\]
Thus, if $\alpha$ appears in $f$, then $\alpha$ is either $a^N$, $b^N$ or $c^3x_1 \cdots x_{N-3}$.
Therefore, $f$ contains only three terms whose height is $2h$.
Moreover, by the above claims, $f$ does not contain the term whose height is less than $2h$.
By Lemma \ref{lem:height criterion for element}, the quasi-$F$-split height of $f$ is $2h$, that is, the height of $S/f$ is $2h$.

\end{eg}

\printbibliography
\end{document}